\definecolor{AliceBlue}{rgb}{0.94,0.97,1.00}
\definecolor{AntiqueWhite1}{rgb}{1.00,0.94,0.86}
\definecolor{AntiqueWhite2}{rgb}{0.93,0.87,0.80}
\definecolor{AntiqueWhite3}{rgb}{0.80,0.75,0.69}
\definecolor{AntiqueWhite4}{rgb}{0.55,0.51,0.47}
\definecolor{AntiqueWhite}{rgb}{0.98,0.92,0.84}
\definecolor{BlanchedAlmond}{rgb}{1.00,0.92,0.80}
\definecolor{BlueViolet}{rgb}{0.54,0.17,0.89}
\definecolor{CadetBlue1}{rgb}{0.60,0.96,1.00}
\definecolor{CadetBlue2}{rgb}{0.56,0.90,0.93}
\definecolor{CadetBlue3}{rgb}{0.48,0.77,0.80}
\definecolor{CadetBlue4}{rgb}{0.33,0.53,0.55}
\definecolor{CadetBlue}{rgb}{0.37,0.62,0.63}
\definecolor{CornflowerBlue}{rgb}{0.39,0.58,0.93}
\definecolor{DarkBlue}{rgb}{0.00,0.00,0.55}
\definecolor{DarkCyan}{rgb}{0.00,0.55,0.55}
\definecolor{DarkGoldenrod1}{rgb}{1.00,0.73,0.06}
\definecolor{DarkGoldenrod2}{rgb}{0.93,0.68,0.05}
\definecolor{DarkGoldenrod3}{rgb}{0.80,0.58,0.05}
\definecolor{DarkGoldenrod4}{rgb}{0.55,0.40,0.03}
\definecolor{DarkGoldenrod}{rgb}{0.72,0.53,0.04}
\definecolor{DarkGray}{rgb}{0.66,0.66,0.66}
\definecolor{DarkGreen}{rgb}{0.00,0.39,0.00}
\definecolor{DarkGrey}{rgb}{0.66,0.66,0.66}
\definecolor{DarkKhaki}{rgb}{0.74,0.72,0.42}
\definecolor{DarkMagenta}{rgb}{0.55,0.00,0.55}
\definecolor{DarkOliveGreen1}{rgb}{0.79,1.00,0.44}
\definecolor{DarkOliveGreen2}{rgb}{0.74,0.93,0.41}
\definecolor{DarkOliveGreen3}{rgb}{0.64,0.80,0.35}
\definecolor{DarkOliveGreen4}{rgb}{0.43,0.55,0.24}
\definecolor{DarkOliveGreen}{rgb}{0.33,0.42,0.18}
\definecolor{DarkOrange1}{rgb}{1.00,0.50,0.00}
\definecolor{DarkOrange2}{rgb}{0.93,0.46,0.00}
\definecolor{DarkOrange3}{rgb}{0.80,0.40,0.00}
\definecolor{DarkOrange4}{rgb}{0.55,0.27,0.00}
\definecolor{DarkOrange}{rgb}{1.00,0.55,0.00}
\definecolor{DarkOrchid1}{rgb}{0.75,0.24,1.00}
\definecolor{DarkOrchid2}{rgb}{0.70,0.23,0.93}
\definecolor{DarkOrchid3}{rgb}{0.60,0.20,0.80}
\definecolor{DarkOrchid4}{rgb}{0.41,0.13,0.55}
\definecolor{DarkOrchid}{rgb}{0.60,0.20,0.80}
\definecolor{DarkRed}{rgb}{0.55,0.00,0.00}
\definecolor{DarkSalmon}{rgb}{0.91,0.59,0.48}
\definecolor{DarkSeaGreen1}{rgb}{0.76,1.00,0.76}
\definecolor{DarkSeaGreen2}{rgb}{0.71,0.93,0.71}
\definecolor{DarkSeaGreen3}{rgb}{0.61,0.80,0.61}
\definecolor{DarkSeaGreen4}{rgb}{0.41,0.55,0.41}
\definecolor{DarkSeaGreen}{rgb}{0.56,0.74,0.56}
\definecolor{DarkSlateBlue}{rgb}{0.28,0.24,0.55}
\definecolor{DarkSlateGray1}{rgb}{0.59,1.00,1.00}
\definecolor{DarkSlateGray2}{rgb}{0.55,0.93,0.93}
\definecolor{DarkSlateGray3}{rgb}{0.47,0.80,0.80}
\definecolor{DarkSlateGray4}{rgb}{0.32,0.55,0.55}
\definecolor{DarkSlateGray}{rgb}{0.18,0.31,0.31}
\definecolor{DarkSlateGrey}{rgb}{0.18,0.31,0.31}
\definecolor{DarkTurquoise}{rgb}{0.00,0.81,0.82}
\definecolor{DarkViolet}{rgb}{0.58,0.00,0.83}
\definecolor{DeepPink1}{rgb}{1.00,0.08,0.58}
\definecolor{DeepPink2}{rgb}{0.93,0.07,0.54}
\definecolor{DeepPink3}{rgb}{0.80,0.06,0.46}
\definecolor{DeepPink4}{rgb}{0.55,0.04,0.31}
\definecolor{DeepPink}{rgb}{1.00,0.08,0.58}
\definecolor{DeepSkyBlue1}{rgb}{0.00,0.75,1.00}
\definecolor{DeepSkyBlue2}{rgb}{0.00,0.70,0.93}
\definecolor{DeepSkyBlue3}{rgb}{0.00,0.60,0.80}
\definecolor{DeepSkyBlue4}{rgb}{0.00,0.41,0.55}
\definecolor{DeepSkyBlue}{rgb}{0.00,0.75,1.00}
\definecolor{DimGray}{rgb}{0.41,0.41,0.41}
\definecolor{DimGrey}{rgb}{0.41,0.41,0.41}
\definecolor{DodgerBlue1}{rgb}{0.12,0.56,1.00}
\definecolor{DodgerBlue2}{rgb}{0.11,0.53,0.93}
\definecolor{DodgerBlue3}{rgb}{0.09,0.45,0.80}
\definecolor{DodgerBlue4}{rgb}{0.06,0.31,0.55}
\definecolor{DodgerBlue}{rgb}{0.12,0.56,1.00}
\definecolor{FloralWhite}{rgb}{1.00,0.98,0.94}
\definecolor{ForestGreen}{rgb}{0.13,0.55,0.13}
\definecolor{GhostWhite}{rgb}{0.97,0.97,1.00}
\definecolor{GreenYellow}{rgb}{0.68,1.00,0.18}
\definecolor{HotPink1}{rgb}{1.00,0.43,0.71}
\definecolor{HotPink2}{rgb}{0.93,0.42,0.65}
\definecolor{HotPink3}{rgb}{0.80,0.38,0.56}
\definecolor{HotPink4}{rgb}{0.55,0.23,0.38}
\definecolor{HotPink}{rgb}{1.00,0.41,0.71}
\definecolor{IndianRed1}{rgb}{1.00,0.42,0.42}
\definecolor{IndianRed2}{rgb}{0.93,0.39,0.39}
\definecolor{IndianRed3}{rgb}{0.80,0.33,0.33}
\definecolor{IndianRed4}{rgb}{0.55,0.23,0.23}
\definecolor{IndianRed}{rgb}{0.80,0.36,0.36}
\definecolor{LavenderBlush1}{rgb}{1.00,0.94,0.96}
\definecolor{LavenderBlush2}{rgb}{0.93,0.88,0.90}
\definecolor{LavenderBlush3}{rgb}{0.80,0.76,0.77}
\definecolor{LavenderBlush4}{rgb}{0.55,0.51,0.53}
\definecolor{LavenderBlush}{rgb}{1.00,0.94,0.96}
\definecolor{LawnGreen}{rgb}{0.49,0.99,0.00}
\definecolor{LemonChiffon1}{rgb}{1.00,0.98,0.80}
\definecolor{LemonChiffon2}{rgb}{0.93,0.91,0.75}
\definecolor{LemonChiffon3}{rgb}{0.80,0.79,0.65}
\definecolor{LemonChiffon4}{rgb}{0.55,0.54,0.44}
\definecolor{LemonChiffon}{rgb}{1.00,0.98,0.80}
\definecolor{LightBlue1}{rgb}{0.75,0.94,1.00}
\definecolor{LightBlue2}{rgb}{0.70,0.87,0.93}
\definecolor{LightBlue3}{rgb}{0.60,0.75,0.80}
\definecolor{LightBlue4}{rgb}{0.41,0.51,0.55}
\definecolor{LightBlue}{rgb}{0.68,0.85,0.90}
\definecolor{LightCoral}{rgb}{0.94,0.50,0.50}
\definecolor{LightCyan1}{rgb}{0.88,1.00,1.00}
\definecolor{LightCyan2}{rgb}{0.82,0.93,0.93}
\definecolor{LightCyan3}{rgb}{0.71,0.80,0.80}
\definecolor{LightCyan4}{rgb}{0.48,0.55,0.55}
\definecolor{LightCyan}{rgb}{0.88,1.00,1.00}
\definecolor{LightGoldenrod1}{rgb}{1.00,0.93,0.55}
\definecolor{LightGoldenrod2}{rgb}{0.93,0.86,0.51}
\definecolor{LightGoldenrod3}{rgb}{0.80,0.75,0.44}
\definecolor{LightGoldenrod4}{rgb}{0.55,0.51,0.30}
\definecolor{LightGoldenrodYellow}{rgb}{0.98,0.98,0.82}
\definecolor{LightGoldenrod}{rgb}{0.93,0.87,0.51}
\definecolor{LightGray}{rgb}{0.83,0.83,0.83}
\definecolor{LightGreen}{rgb}{0.56,0.93,0.56}
\definecolor{LightGrey}{rgb}{0.83,0.83,0.83}
\definecolor{LightPink1}{rgb}{1.00,0.68,0.73}
\definecolor{LightPink2}{rgb}{0.93,0.64,0.68}
\definecolor{LightPink3}{rgb}{0.80,0.55,0.58}
\definecolor{LightPink4}{rgb}{0.55,0.37,0.40}
\definecolor{LightPink}{rgb}{1.00,0.71,0.76}
\definecolor{LightSalmon1}{rgb}{1.00,0.63,0.48}
\definecolor{LightSalmon2}{rgb}{0.93,0.58,0.45}
\definecolor{LightSalmon3}{rgb}{0.80,0.51,0.38}
\definecolor{LightSalmon4}{rgb}{0.55,0.34,0.26}
\definecolor{LightSalmon}{rgb}{1.00,0.63,0.48}
\definecolor{LightSeaGreen}{rgb}{0.13,0.70,0.67}
\definecolor{LightSkyBlue1}{rgb}{0.69,0.89,1.00}
\definecolor{LightSkyBlue2}{rgb}{0.64,0.83,0.93}
\definecolor{LightSkyBlue3}{rgb}{0.55,0.71,0.80}
\definecolor{LightSkyBlue4}{rgb}{0.38,0.48,0.55}
\definecolor{LightSkyBlue}{rgb}{0.53,0.81,0.98}
\definecolor{LightSlateBlue}{rgb}{0.52,0.44,1.00}
\definecolor{LightSlateGray}{rgb}{0.47,0.53,0.60}
\definecolor{LightSlateGrey}{rgb}{0.47,0.53,0.60}
\definecolor{LightSteelBlue1}{rgb}{0.79,0.88,1.00}
\definecolor{LightSteelBlue2}{rgb}{0.74,0.82,0.93}
\definecolor{LightSteelBlue3}{rgb}{0.64,0.71,0.80}
\definecolor{LightSteelBlue4}{rgb}{0.43,0.48,0.55}
\definecolor{LightSteelBlue}{rgb}{0.69,0.77,0.87}
\definecolor{LightYellow1}{rgb}{1.00,1.00,0.88}
\definecolor{LightYellow2}{rgb}{0.93,0.93,0.82}
\definecolor{LightYellow3}{rgb}{0.80,0.80,0.71}
\definecolor{LightYellow4}{rgb}{0.55,0.55,0.48}
\definecolor{LightYellow}{rgb}{1.00,1.00,0.88}
\definecolor{LimeGreen}{rgb}{0.20,0.80,0.20}
\definecolor{MediumAquamarine}{rgb}{0.40,0.80,0.67}
\definecolor{MediumBlue}{rgb}{0.00,0.00,0.80}
\definecolor{MediumOrchid1}{rgb}{0.88,0.40,1.00}
\definecolor{MediumOrchid2}{rgb}{0.82,0.37,0.93}
\definecolor{MediumOrchid3}{rgb}{0.71,0.32,0.80}
\definecolor{MediumOrchid4}{rgb}{0.48,0.22,0.55}
\definecolor{MediumOrchid}{rgb}{0.73,0.33,0.83}
\definecolor{MediumPurple1}{rgb}{0.67,0.51,1.00}
\definecolor{MediumPurple2}{rgb}{0.62,0.47,0.93}
\definecolor{MediumPurple3}{rgb}{0.54,0.41,0.80}
\definecolor{MediumPurple4}{rgb}{0.36,0.28,0.55}
\definecolor{MediumPurple}{rgb}{0.58,0.44,0.86}
\definecolor{MediumSeaGreen}{rgb}{0.24,0.70,0.44}
\definecolor{MediumSlateBlue}{rgb}{0.48,0.41,0.93}
\definecolor{MediumSpringGreen}{rgb}{0.00,0.98,0.60}
\definecolor{MediumTurquoise}{rgb}{0.28,0.82,0.80}
\definecolor{MediumVioletRed}{rgb}{0.78,0.08,0.52}
\definecolor{MidnightBlue}{rgb}{0.10,0.10,0.44}
\definecolor{MintCream}{rgb}{0.96,1.00,0.98}
\definecolor{MistyRose1}{rgb}{1.00,0.89,0.88}
\definecolor{MistyRose2}{rgb}{0.93,0.84,0.82}
\definecolor{MistyRose3}{rgb}{0.80,0.72,0.71}
\definecolor{MistyRose4}{rgb}{0.55,0.49,0.48}
\definecolor{MistyRose}{rgb}{1.00,0.89,0.88}
\definecolor{NavajoWhite1}{rgb}{1.00,0.87,0.68}
\definecolor{NavajoWhite2}{rgb}{0.93,0.81,0.63}
\definecolor{NavajoWhite3}{rgb}{0.80,0.70,0.55}
\definecolor{NavajoWhite4}{rgb}{0.55,0.47,0.37}
\definecolor{NavajoWhite}{rgb}{1.00,0.87,0.68}
\definecolor{NavyBlue}{rgb}{0.00,0.00,0.50}
\definecolor{OldLace}{rgb}{0.99,0.96,0.90}
\definecolor{OliveDrab1}{rgb}{0.75,1.00,0.24}
\definecolor{OliveDrab2}{rgb}{0.70,0.93,0.23}
\definecolor{OliveDrab3}{rgb}{0.60,0.80,0.20}
\definecolor{OliveDrab4}{rgb}{0.41,0.55,0.13}
\definecolor{OliveDrab}{rgb}{0.42,0.56,0.14}
\definecolor{OrangeRed1}{rgb}{1.00,0.27,0.00}
\definecolor{OrangeRed2}{rgb}{0.93,0.25,0.00}
\definecolor{OrangeRed3}{rgb}{0.80,0.22,0.00}
\definecolor{OrangeRed4}{rgb}{0.55,0.15,0.00}
\definecolor{OrangeRed}{rgb}{1.00,0.27,0.00}
\definecolor{PaleGoldenrod}{rgb}{0.93,0.91,0.67}
\definecolor{PaleGreen1}{rgb}{0.60,1.00,0.60}
\definecolor{PaleGreen2}{rgb}{0.56,0.93,0.56}
\definecolor{PaleGreen3}{rgb}{0.49,0.80,0.49}
\definecolor{PaleGreen4}{rgb}{0.33,0.55,0.33}
\definecolor{PaleGreen}{rgb}{0.60,0.98,0.60}
\definecolor{PaleTurquoise1}{rgb}{0.73,1.00,1.00}
\definecolor{PaleTurquoise2}{rgb}{0.68,0.93,0.93}
\definecolor{PaleTurquoise3}{rgb}{0.59,0.80,0.80}
\definecolor{PaleTurquoise4}{rgb}{0.40,0.55,0.55}
\definecolor{PaleTurquoise}{rgb}{0.69,0.93,0.93}
\definecolor{PaleVioletRed1}{rgb}{1.00,0.51,0.67}
\definecolor{PaleVioletRed2}{rgb}{0.93,0.47,0.62}
\definecolor{PaleVioletRed3}{rgb}{0.80,0.41,0.54}
\definecolor{PaleVioletRed4}{rgb}{0.55,0.28,0.36}
\definecolor{PaleVioletRed}{rgb}{0.86,0.44,0.58}
\definecolor{PapayaWhip}{rgb}{1.00,0.94,0.84}
\definecolor{PeachPuff1}{rgb}{1.00,0.85,0.73}
\definecolor{PeachPuff2}{rgb}{0.93,0.80,0.68}
\definecolor{PeachPuff3}{rgb}{0.80,0.69,0.58}
\definecolor{PeachPuff4}{rgb}{0.55,0.47,0.40}
\definecolor{PeachPuff}{rgb}{1.00,0.85,0.73}
\definecolor{PowderBlue}{rgb}{0.69,0.88,0.90}
\definecolor{RosyBrown1}{rgb}{1.00,0.76,0.76}
\definecolor{RosyBrown2}{rgb}{0.93,0.71,0.71}
\definecolor{RosyBrown3}{rgb}{0.80,0.61,0.61}
\definecolor{RosyBrown4}{rgb}{0.55,0.41,0.41}
\definecolor{RosyBrown}{rgb}{0.74,0.56,0.56}
\definecolor{RoyalBlue1}{rgb}{0.28,0.46,1.00}
\definecolor{RoyalBlue2}{rgb}{0.26,0.43,0.93}
\definecolor{RoyalBlue3}{rgb}{0.23,0.37,0.80}
\definecolor{RoyalBlue4}{rgb}{0.15,0.25,0.55}
\definecolor{RoyalBlue}{rgb}{0.25,0.41,0.88}
\definecolor{SaddleBrown}{rgb}{0.55,0.27,0.07}
\definecolor{SandyBrown}{rgb}{0.96,0.64,0.38}
\definecolor{SeaGreen1}{rgb}{0.33,1.00,0.62}
\definecolor{SeaGreen2}{rgb}{0.31,0.93,0.58}
\definecolor{SeaGreen3}{rgb}{0.26,0.80,0.50}
\definecolor{SeaGreen4}{rgb}{0.18,0.55,0.34}
\definecolor{SeaGreen}{rgb}{0.18,0.55,0.34}
\definecolor{SkyBlue1}{rgb}{0.53,0.81,1.00}
\definecolor{SkyBlue2}{rgb}{0.49,0.75,0.93}
\definecolor{SkyBlue3}{rgb}{0.42,0.65,0.80}
\definecolor{SkyBlue4}{rgb}{0.29,0.44,0.55}
\definecolor{SkyBlue}{rgb}{0.53,0.81,0.92}
\definecolor{SlateBlue1}{rgb}{0.51,0.44,1.00}
\definecolor{SlateBlue2}{rgb}{0.48,0.40,0.93}
\definecolor{SlateBlue3}{rgb}{0.41,0.35,0.80}
\definecolor{SlateBlue4}{rgb}{0.28,0.24,0.55}
\definecolor{SlateBlue}{rgb}{0.42,0.35,0.80}
\definecolor{SlateGray1}{rgb}{0.78,0.89,1.00}
\definecolor{SlateGray2}{rgb}{0.73,0.83,0.93}
\definecolor{SlateGray3}{rgb}{0.62,0.71,0.80}
\definecolor{SlateGray4}{rgb}{0.42,0.48,0.55}
\definecolor{SlateGray}{rgb}{0.44,0.50,0.56}
\definecolor{SlateGrey}{rgb}{0.44,0.50,0.56}
\definecolor{SpringGreen1}{rgb}{0.00,1.00,0.50}
\definecolor{SpringGreen2}{rgb}{0.00,0.93,0.46}
\definecolor{SpringGreen3}{rgb}{0.00,0.80,0.40}
\definecolor{SpringGreen4}{rgb}{0.00,0.55,0.27}
\definecolor{SpringGreen}{rgb}{0.00,1.00,0.50}
\definecolor{SteelBlue1}{rgb}{0.39,0.72,1.00}
\definecolor{SteelBlue2}{rgb}{0.36,0.67,0.93}
\definecolor{SteelBlue3}{rgb}{0.31,0.58,0.80}
\definecolor{SteelBlue4}{rgb}{0.21,0.39,0.55}
\definecolor{SteelBlue}{rgb}{0.27,0.51,0.71}
\definecolor{VioletRed1}{rgb}{1.00,0.24,0.59}
\definecolor{VioletRed2}{rgb}{0.93,0.23,0.55}
\definecolor{VioletRed3}{rgb}{0.80,0.20,0.47}
\definecolor{VioletRed4}{rgb}{0.55,0.13,0.32}
\definecolor{VioletRed}{rgb}{0.82,0.13,0.56}
\definecolor{WhiteSmoke}{rgb}{0.96,0.96,0.96}
\definecolor{YellowGreen}{rgb}{0.60,0.80,0.20}
\definecolor{aliceblue}{rgb}{0.94,0.97,1.00}
\definecolor{antiquewhite}{rgb}{0.98,0.92,0.84}
\definecolor{aquamarine1}{rgb}{0.50,1.00,0.83}
\definecolor{aquamarine2}{rgb}{0.46,0.93,0.78}
\definecolor{aquamarine3}{rgb}{0.40,0.80,0.67}
\definecolor{aquamarine4}{rgb}{0.27,0.55,0.45}
\definecolor{aquamarine}{rgb}{0.50,1.00,0.83}
\definecolor{azure1}{rgb}{0.94,1.00,1.00}
\definecolor{azure2}{rgb}{0.88,0.93,0.93}
\definecolor{azure3}{rgb}{0.76,0.80,0.80}
\definecolor{azure4}{rgb}{0.51,0.55,0.55}
\definecolor{azure}{rgb}{0.94,1.00,1.00}
\definecolor{beige}{rgb}{0.96,0.96,0.86}
\definecolor{bisque1}{rgb}{1.00,0.89,0.77}
\definecolor{bisque2}{rgb}{0.93,0.84,0.72}
\definecolor{bisque3}{rgb}{0.80,0.72,0.62}
\definecolor{bisque4}{rgb}{0.55,0.49,0.42}
\definecolor{bisque}{rgb}{1.00,0.89,0.77}
\definecolor{black}{rgb}{0.00,0.00,0.00}
\definecolor{blanchedalmond}{rgb}{1.00,0.92,0.80}
\definecolor{blue1}{rgb}{0.00,0.00,1.00}
\definecolor{blue2}{rgb}{0.00,0.00,0.93}
\definecolor{blue3}{rgb}{0.00,0.00,0.80}
\definecolor{blue4}{rgb}{0.00,0.00,0.55}
\definecolor{blueviolet}{rgb}{0.54,0.17,0.89}
\definecolor{blue}{rgb}{0.00,0.00,1.00}
\definecolor{brown1}{rgb}{1.00,0.25,0.25}
\definecolor{brown2}{rgb}{0.93,0.23,0.23}
\definecolor{brown3}{rgb}{0.80,0.20,0.20}
\definecolor{brown4}{rgb}{0.55,0.14,0.14}
\definecolor{brown}{rgb}{0.65,0.16,0.16}
\definecolor{burlywood1}{rgb}{1.00,0.83,0.61}
\definecolor{burlywood2}{rgb}{0.93,0.77,0.57}
\definecolor{burlywood3}{rgb}{0.80,0.67,0.49}
\definecolor{burlywood4}{rgb}{0.55,0.45,0.33}
\definecolor{burlywood}{rgb}{0.87,0.72,0.53}
\definecolor{cadetblue}{rgb}{0.37,0.62,0.63}
\definecolor{chartreuse1}{rgb}{0.50,1.00,0.00}
\definecolor{chartreuse2}{rgb}{0.46,0.93,0.00}
\definecolor{chartreuse3}{rgb}{0.40,0.80,0.00}
\definecolor{chartreuse4}{rgb}{0.27,0.55,0.00}
\definecolor{chartreuse}{rgb}{0.50,1.00,0.00}
\definecolor{chocolate1}{rgb}{1.00,0.50,0.14}
\definecolor{chocolate2}{rgb}{0.93,0.46,0.13}
\definecolor{chocolate3}{rgb}{0.80,0.40,0.11}
\definecolor{chocolate4}{rgb}{0.55,0.27,0.07}
\definecolor{chocolate}{rgb}{0.82,0.41,0.12}
\definecolor{coral1}{rgb}{1.00,0.45,0.34}
\definecolor{coral2}{rgb}{0.93,0.42,0.31}
\definecolor{coral3}{rgb}{0.80,0.36,0.27}
\definecolor{coral4}{rgb}{0.55,0.24,0.18}
\definecolor{coral}{rgb}{1.00,0.50,0.31}
\definecolor{cornflowerblue}{rgb}{0.39,0.58,0.93}
\definecolor{cornsilk1}{rgb}{1.00,0.97,0.86}
\definecolor{cornsilk2}{rgb}{0.93,0.91,0.80}
\definecolor{cornsilk3}{rgb}{0.80,0.78,0.69}
\definecolor{cornsilk4}{rgb}{0.55,0.53,0.47}
\definecolor{cornsilk}{rgb}{1.00,0.97,0.86}
\definecolor{cyan1}{rgb}{0.00,1.00,1.00}
\definecolor{cyan2}{rgb}{0.00,0.93,0.93}
\definecolor{cyan3}{rgb}{0.00,0.80,0.80}
\definecolor{cyan4}{rgb}{0.00,0.55,0.55}
\definecolor{cyan}{rgb}{0.00,1.00,1.00}
\definecolor{darkblue}{rgb}{0.00,0.00,0.55}
\definecolor{darkcyan}{rgb}{0.00,0.55,0.55}
\definecolor{darkgoldenrod}{rgb}{0.72,0.53,0.04}
\definecolor{darkgray}{rgb}{0.66,0.66,0.66}
\definecolor{darkgreen}{rgb}{0.00,0.39,0.00}
\definecolor{darkgrey}{rgb}{0.66,0.66,0.66}
\definecolor{darkkhaki}{rgb}{0.74,0.72,0.42}
\definecolor{darkmagenta}{rgb}{0.55,0.00,0.55}
\definecolor{darkolive}{rgb}{0.33,0.42,0.18}
\definecolor{darkorange}{rgb}{1.00,0.55,0.00}
\definecolor{darkorchid}{rgb}{0.60,0.20,0.80}
\definecolor{darkred}{rgb}{0.55,0.00,0.00}
\definecolor{darksalmon}{rgb}{0.91,0.59,0.48}
\definecolor{darksea}{rgb}{0.56,0.74,0.56}
\definecolor{darkslate}{rgb}{0.18,0.31,0.31}
\definecolor{darkslate}{rgb}{0.18,0.31,0.31}
\definecolor{darkslate}{rgb}{0.28,0.24,0.55}
\definecolor{darkturquoise}{rgb}{0.00,0.81,0.82}
\definecolor{darkviolet}{rgb}{0.58,0.00,0.83}
\definecolor{deeppink}{rgb}{1.00,0.08,0.58}
\definecolor{deepsky}{rgb}{0.00,0.75,1.00}
\definecolor{dimgray}{rgb}{0.41,0.41,0.41}
\definecolor{dimgrey}{rgb}{0.41,0.41,0.41}
\definecolor{dodgerblue}{rgb}{0.12,0.56,1.00}
\definecolor{firebrick1}{rgb}{1.00,0.19,0.19}
\definecolor{firebrick2}{rgb}{0.93,0.17,0.17}
\definecolor{firebrick3}{rgb}{0.80,0.15,0.15}
\definecolor{firebrick4}{rgb}{0.55,0.10,0.10}
\definecolor{firebrick}{rgb}{0.70,0.13,0.13}
\definecolor{floralwhite}{rgb}{1.00,0.98,0.94}
\definecolor{forestgreen}{rgb}{0.13,0.55,0.13}
\definecolor{gainsboro}{rgb}{0.86,0.86,0.86}
\definecolor{ghostwhite}{rgb}{0.97,0.97,1.00}
\definecolor{gold1}{rgb}{1.00,0.84,0.00}
\definecolor{gold2}{rgb}{0.93,0.79,0.00}
\definecolor{gold3}{rgb}{0.80,0.68,0.00}
\definecolor{gold4}{rgb}{0.55,0.46,0.00}
\definecolor{goldenrod1}{rgb}{1.00,0.76,0.15}
\definecolor{goldenrod2}{rgb}{0.93,0.71,0.13}
\definecolor{goldenrod3}{rgb}{0.80,0.61,0.11}
\definecolor{goldenrod4}{rgb}{0.55,0.41,0.08}
\definecolor{goldenrod}{rgb}{0.85,0.65,0.13}
\definecolor{gold}{rgb}{1.00,0.84,0.00}
\definecolor{gray0}{rgb}{0.00,0.00,0.00}
\definecolor{gray100}{rgb}{1.00,1.00,1.00}
\definecolor{gray10}{rgb}{0.10,0.10,0.10}
\definecolor{gray11}{rgb}{0.11,0.11,0.11}
\definecolor{gray12}{rgb}{0.12,0.12,0.12}
\definecolor{gray13}{rgb}{0.13,0.13,0.13}
\definecolor{gray14}{rgb}{0.14,0.14,0.14}
\definecolor{gray15}{rgb}{0.15,0.15,0.15}
\definecolor{gray16}{rgb}{0.16,0.16,0.16}
\definecolor{gray17}{rgb}{0.17,0.17,0.17}
\definecolor{gray18}{rgb}{0.18,0.18,0.18}
\definecolor{gray19}{rgb}{0.19,0.19,0.19}
\definecolor{gray1}{rgb}{0.01,0.01,0.01}
\definecolor{gray20}{rgb}{0.20,0.20,0.20}
\definecolor{gray21}{rgb}{0.21,0.21,0.21}
\definecolor{gray22}{rgb}{0.22,0.22,0.22}
\definecolor{gray23}{rgb}{0.23,0.23,0.23}
\definecolor{gray24}{rgb}{0.24,0.24,0.24}
\definecolor{gray25}{rgb}{0.25,0.25,0.25}
\definecolor{gray26}{rgb}{0.26,0.26,0.26}
\definecolor{gray27}{rgb}{0.27,0.27,0.27}
\definecolor{gray28}{rgb}{0.28,0.28,0.28}
\definecolor{gray29}{rgb}{0.29,0.29,0.29}
\definecolor{gray2}{rgb}{0.02,0.02,0.02}
\definecolor{gray30}{rgb}{0.30,0.30,0.30}
\definecolor{gray31}{rgb}{0.31,0.31,0.31}
\definecolor{gray32}{rgb}{0.32,0.32,0.32}
\definecolor{gray33}{rgb}{0.33,0.33,0.33}
\definecolor{gray34}{rgb}{0.34,0.34,0.34}
\definecolor{gray35}{rgb}{0.35,0.35,0.35}
\definecolor{gray36}{rgb}{0.36,0.36,0.36}
\definecolor{gray37}{rgb}{0.37,0.37,0.37}
\definecolor{gray38}{rgb}{0.38,0.38,0.38}
\definecolor{gray39}{rgb}{0.39,0.39,0.39}
\definecolor{gray3}{rgb}{0.03,0.03,0.03}
\definecolor{gray40}{rgb}{0.40,0.40,0.40}
\definecolor{gray41}{rgb}{0.41,0.41,0.41}
\definecolor{gray42}{rgb}{0.42,0.42,0.42}
\definecolor{gray43}{rgb}{0.43,0.43,0.43}
\definecolor{gray44}{rgb}{0.44,0.44,0.44}
\definecolor{gray45}{rgb}{0.45,0.45,0.45}
\definecolor{gray46}{rgb}{0.46,0.46,0.46}
\definecolor{gray47}{rgb}{0.47,0.47,0.47}
\definecolor{gray48}{rgb}{0.48,0.48,0.48}
\definecolor{gray49}{rgb}{0.49,0.49,0.49}
\definecolor{gray4}{rgb}{0.04,0.04,0.04}
\definecolor{gray50}{rgb}{0.50,0.50,0.50}
\definecolor{gray51}{rgb}{0.51,0.51,0.51}
\definecolor{gray52}{rgb}{0.52,0.52,0.52}
\definecolor{gray53}{rgb}{0.53,0.53,0.53}
\definecolor{gray54}{rgb}{0.54,0.54,0.54}
\definecolor{gray55}{rgb}{0.55,0.55,0.55}
\definecolor{gray56}{rgb}{0.56,0.56,0.56}
\definecolor{gray57}{rgb}{0.57,0.57,0.57}
\definecolor{gray58}{rgb}{0.58,0.58,0.58}
\definecolor{gray59}{rgb}{0.59,0.59,0.59}
\definecolor{gray5}{rgb}{0.05,0.05,0.05}
\definecolor{gray60}{rgb}{0.60,0.60,0.60}
\definecolor{gray61}{rgb}{0.61,0.61,0.61}
\definecolor{gray62}{rgb}{0.62,0.62,0.62}
\definecolor{gray63}{rgb}{0.63,0.63,0.63}
\definecolor{gray64}{rgb}{0.64,0.64,0.64}
\definecolor{gray65}{rgb}{0.65,0.65,0.65}
\definecolor{gray66}{rgb}{0.66,0.66,0.66}
\definecolor{gray67}{rgb}{0.67,0.67,0.67}
\definecolor{gray68}{rgb}{0.68,0.68,0.68}
\definecolor{gray69}{rgb}{0.69,0.69,0.69}
\definecolor{gray6}{rgb}{0.06,0.06,0.06}
\definecolor{gray70}{rgb}{0.70,0.70,0.70}
\definecolor{gray71}{rgb}{0.71,0.71,0.71}
\definecolor{gray72}{rgb}{0.72,0.72,0.72}
\definecolor{gray73}{rgb}{0.73,0.73,0.73}
\definecolor{gray74}{rgb}{0.74,0.74,0.74}
\definecolor{gray75}{rgb}{0.75,0.75,0.75}
\definecolor{gray76}{rgb}{0.76,0.76,0.76}
\definecolor{gray77}{rgb}{0.77,0.77,0.77}
\definecolor{gray78}{rgb}{0.78,0.78,0.78}
\definecolor{gray79}{rgb}{0.79,0.79,0.79}
\definecolor{gray7}{rgb}{0.07,0.07,0.07}
\definecolor{gray80}{rgb}{0.80,0.80,0.80}
\definecolor{gray81}{rgb}{0.81,0.81,0.81}
\definecolor{gray82}{rgb}{0.82,0.82,0.82}
\definecolor{gray83}{rgb}{0.83,0.83,0.83}
\definecolor{gray84}{rgb}{0.84,0.84,0.84}
\definecolor{gray85}{rgb}{0.85,0.85,0.85}
\definecolor{gray86}{rgb}{0.86,0.86,0.86}
\definecolor{gray87}{rgb}{0.87,0.87,0.87}
\definecolor{gray88}{rgb}{0.88,0.88,0.88}
\definecolor{gray89}{rgb}{0.89,0.89,0.89}
\definecolor{gray8}{rgb}{0.08,0.08,0.08}
\definecolor{gray90}{rgb}{0.90,0.90,0.90}
\definecolor{gray91}{rgb}{0.91,0.91,0.91}
\definecolor{gray92}{rgb}{0.92,0.92,0.92}
\definecolor{gray93}{rgb}{0.93,0.93,0.93}
\definecolor{gray94}{rgb}{0.94,0.94,0.94}
\definecolor{gray95}{rgb}{0.95,0.95,0.95}
\definecolor{gray96}{rgb}{0.96,0.96,0.96}
\definecolor{gray97}{rgb}{0.97,0.97,0.97}
\definecolor{gray98}{rgb}{0.98,0.98,0.98}
\definecolor{gray99}{rgb}{0.99,0.99,0.99}
\definecolor{gray9}{rgb}{0.09,0.09,0.09}
\definecolor{gray}{rgb}{0.75,0.75,0.75}
\definecolor{green1}{rgb}{0.00,1.00,0.00}
\definecolor{green2}{rgb}{0.00,0.93,0.00}
\definecolor{green3}{rgb}{0.00,0.80,0.00}
\definecolor{green4}{rgb}{0.00,0.55,0.00}
\definecolor{greenyellow}{rgb}{0.68,1.00,0.18}
\definecolor{green}{rgb}{0.00,1.00,0.00}
\definecolor{grey0}{rgb}{0.00,0.00,0.00}
\definecolor{grey100}{rgb}{1.00,1.00,1.00}
\definecolor{grey10}{rgb}{0.10,0.10,0.10}
\definecolor{grey11}{rgb}{0.11,0.11,0.11}
\definecolor{grey12}{rgb}{0.12,0.12,0.12}
\definecolor{grey13}{rgb}{0.13,0.13,0.13}
\definecolor{grey14}{rgb}{0.14,0.14,0.14}
\definecolor{grey15}{rgb}{0.15,0.15,0.15}
\definecolor{grey16}{rgb}{0.16,0.16,0.16}
\definecolor{grey17}{rgb}{0.17,0.17,0.17}
\definecolor{grey18}{rgb}{0.18,0.18,0.18}
\definecolor{grey19}{rgb}{0.19,0.19,0.19}
\definecolor{grey1}{rgb}{0.01,0.01,0.01}
\definecolor{grey20}{rgb}{0.20,0.20,0.20}
\definecolor{grey21}{rgb}{0.21,0.21,0.21}
\definecolor{grey22}{rgb}{0.22,0.22,0.22}
\definecolor{grey23}{rgb}{0.23,0.23,0.23}
\definecolor{grey24}{rgb}{0.24,0.24,0.24}
\definecolor{grey25}{rgb}{0.25,0.25,0.25}
\definecolor{grey26}{rgb}{0.26,0.26,0.26}
\definecolor{grey27}{rgb}{0.27,0.27,0.27}
\definecolor{grey28}{rgb}{0.28,0.28,0.28}
\definecolor{grey29}{rgb}{0.29,0.29,0.29}
\definecolor{grey2}{rgb}{0.02,0.02,0.02}
\definecolor{grey30}{rgb}{0.30,0.30,0.30}
\definecolor{grey31}{rgb}{0.31,0.31,0.31}
\definecolor{grey32}{rgb}{0.32,0.32,0.32}
\definecolor{grey33}{rgb}{0.33,0.33,0.33}
\definecolor{grey34}{rgb}{0.34,0.34,0.34}
\definecolor{grey35}{rgb}{0.35,0.35,0.35}
\definecolor{grey36}{rgb}{0.36,0.36,0.36}
\definecolor{grey37}{rgb}{0.37,0.37,0.37}
\definecolor{grey38}{rgb}{0.38,0.38,0.38}
\definecolor{grey39}{rgb}{0.39,0.39,0.39}
\definecolor{grey3}{rgb}{0.03,0.03,0.03}
\definecolor{grey40}{rgb}{0.40,0.40,0.40}
\definecolor{grey41}{rgb}{0.41,0.41,0.41}
\definecolor{grey42}{rgb}{0.42,0.42,0.42}
\definecolor{grey43}{rgb}{0.43,0.43,0.43}
\definecolor{grey44}{rgb}{0.44,0.44,0.44}
\definecolor{grey45}{rgb}{0.45,0.45,0.45}
\definecolor{grey46}{rgb}{0.46,0.46,0.46}
\definecolor{grey47}{rgb}{0.47,0.47,0.47}
\definecolor{grey48}{rgb}{0.48,0.48,0.48}
\definecolor{grey49}{rgb}{0.49,0.49,0.49}
\definecolor{grey4}{rgb}{0.04,0.04,0.04}
\definecolor{grey50}{rgb}{0.50,0.50,0.50}
\definecolor{grey51}{rgb}{0.51,0.51,0.51}
\definecolor{grey52}{rgb}{0.52,0.52,0.52}
\definecolor{grey53}{rgb}{0.53,0.53,0.53}
\definecolor{grey54}{rgb}{0.54,0.54,0.54}
\definecolor{grey55}{rgb}{0.55,0.55,0.55}
\definecolor{grey56}{rgb}{0.56,0.56,0.56}
\definecolor{grey57}{rgb}{0.57,0.57,0.57}
\definecolor{grey58}{rgb}{0.58,0.58,0.58}
\definecolor{grey59}{rgb}{0.59,0.59,0.59}
\definecolor{grey5}{rgb}{0.05,0.05,0.05}
\definecolor{grey60}{rgb}{0.60,0.60,0.60}
\definecolor{grey61}{rgb}{0.61,0.61,0.61}
\definecolor{grey62}{rgb}{0.62,0.62,0.62}
\definecolor{grey63}{rgb}{0.63,0.63,0.63}
\definecolor{grey64}{rgb}{0.64,0.64,0.64}
\definecolor{grey65}{rgb}{0.65,0.65,0.65}
\definecolor{grey66}{rgb}{0.66,0.66,0.66}
\definecolor{grey67}{rgb}{0.67,0.67,0.67}
\definecolor{grey68}{rgb}{0.68,0.68,0.68}
\definecolor{grey69}{rgb}{0.69,0.69,0.69}
\definecolor{grey6}{rgb}{0.06,0.06,0.06}
\definecolor{grey70}{rgb}{0.70,0.70,0.70}
\definecolor{grey71}{rgb}{0.71,0.71,0.71}
\definecolor{grey72}{rgb}{0.72,0.72,0.72}
\definecolor{grey73}{rgb}{0.73,0.73,0.73}
\definecolor{grey74}{rgb}{0.74,0.74,0.74}
\definecolor{grey75}{rgb}{0.75,0.75,0.75}
\definecolor{grey76}{rgb}{0.76,0.76,0.76}
\definecolor{grey77}{rgb}{0.77,0.77,0.77}
\definecolor{grey78}{rgb}{0.78,0.78,0.78}
\definecolor{grey79}{rgb}{0.79,0.79,0.79}
\definecolor{grey7}{rgb}{0.07,0.07,0.07}
\definecolor{grey80}{rgb}{0.80,0.80,0.80}
\definecolor{grey81}{rgb}{0.81,0.81,0.81}
\definecolor{grey82}{rgb}{0.82,0.82,0.82}
\definecolor{grey83}{rgb}{0.83,0.83,0.83}
\definecolor{grey84}{rgb}{0.84,0.84,0.84}
\definecolor{grey85}{rgb}{0.85,0.85,0.85}
\definecolor{grey86}{rgb}{0.86,0.86,0.86}
\definecolor{grey87}{rgb}{0.87,0.87,0.87}
\definecolor{grey88}{rgb}{0.88,0.88,0.88}
\definecolor{grey89}{rgb}{0.89,0.89,0.89}
\definecolor{grey8}{rgb}{0.08,0.08,0.08}
\definecolor{grey90}{rgb}{0.90,0.90,0.90}
\definecolor{grey91}{rgb}{0.91,0.91,0.91}
\definecolor{grey92}{rgb}{0.92,0.92,0.92}
\definecolor{grey93}{rgb}{0.93,0.93,0.93}
\definecolor{grey94}{rgb}{0.94,0.94,0.94}
\definecolor{grey95}{rgb}{0.95,0.95,0.95}
\definecolor{grey96}{rgb}{0.96,0.96,0.96}
\definecolor{grey97}{rgb}{0.97,0.97,0.97}
\definecolor{grey98}{rgb}{0.98,0.98,0.98}
\definecolor{grey99}{rgb}{0.99,0.99,0.99}
\definecolor{grey9}{rgb}{0.09,0.09,0.09}
\definecolor{grey}{rgb}{0.75,0.75,0.75}
\definecolor{honeydew1}{rgb}{0.94,1.00,0.94}
\definecolor{honeydew2}{rgb}{0.88,0.93,0.88}
\definecolor{honeydew3}{rgb}{0.76,0.80,0.76}
\definecolor{honeydew4}{rgb}{0.51,0.55,0.51}
\definecolor{honeydew}{rgb}{0.94,1.00,0.94}
\definecolor{hotpink}{rgb}{1.00,0.41,0.71}
\definecolor{indianred}{rgb}{0.80,0.36,0.36}
\definecolor{ivory1}{rgb}{1.00,1.00,0.94}
\definecolor{ivory2}{rgb}{0.93,0.93,0.88}
\definecolor{ivory3}{rgb}{0.80,0.80,0.76}
\definecolor{ivory4}{rgb}{0.55,0.55,0.51}
\definecolor{ivory}{rgb}{1.00,1.00,0.94}
\definecolor{khaki1}{rgb}{1.00,0.96,0.56}
\definecolor{khaki2}{rgb}{0.93,0.90,0.52}
\definecolor{khaki3}{rgb}{0.80,0.78,0.45}
\definecolor{khaki4}{rgb}{0.55,0.53,0.31}
\definecolor{khaki}{rgb}{0.94,0.90,0.55}
\definecolor{lavenderblush}{rgb}{1.00,0.94,0.96}
\definecolor{lavender}{rgb}{0.90,0.90,0.98}
\definecolor{lawngreen}{rgb}{0.49,0.99,0.00}
\definecolor{lemonchiffon}{rgb}{1.00,0.98,0.80}
\definecolor{lightblue}{rgb}{0.68,0.85,0.90}
\definecolor{lightcoral}{rgb}{0.94,0.50,0.50}
\definecolor{lightcyan}{rgb}{0.88,1.00,1.00}
\definecolor{lightgoldenrod}{rgb}{0.93,0.87,0.51}
\definecolor{lightgoldenrod}{rgb}{0.98,0.98,0.82}
\definecolor{lightgray}{rgb}{0.83,0.83,0.83}
\definecolor{lightgreen}{rgb}{0.56,0.93,0.56}
\definecolor{lightgrey}{rgb}{0.83,0.83,0.83}
\definecolor{lightpink}{rgb}{1.00,0.71,0.76}
\definecolor{lightsalmon}{rgb}{1.00,0.63,0.48}
\definecolor{lightsea}{rgb}{0.13,0.70,0.67}
\definecolor{lightsky}{rgb}{0.53,0.81,0.98}
\definecolor{lightslate}{rgb}{0.47,0.53,0.60}
\definecolor{lightslate}{rgb}{0.47,0.53,0.60}
\definecolor{lightslate}{rgb}{0.52,0.44,1.00}
\definecolor{lightsteel}{rgb}{0.69,0.77,0.87}
\definecolor{lightyellow}{rgb}{1.00,1.00,0.88}
\definecolor{limegreen}{rgb}{0.20,0.80,0.20}
\definecolor{linen}{rgb}{0.98,0.94,0.90}
\definecolor{magenta1}{rgb}{1.00,0.00,1.00}
\definecolor{magenta2}{rgb}{0.93,0.00,0.93}
\definecolor{magenta3}{rgb}{0.80,0.00,0.80}
\definecolor{magenta4}{rgb}{0.55,0.00,0.55}
\definecolor{magenta}{rgb}{1.00,0.00,1.00}
\definecolor{maroon1}{rgb}{1.00,0.20,0.70}
\definecolor{maroon2}{rgb}{0.93,0.19,0.65}
\definecolor{maroon3}{rgb}{0.80,0.16,0.56}
\definecolor{maroon4}{rgb}{0.55,0.11,0.38}
\definecolor{maroon}{rgb}{0.69,0.19,0.38}
\definecolor{mediumaquamarine}{rgb}{0.40,0.80,0.67}
\definecolor{mediumblue}{rgb}{0.00,0.00,0.80}
\definecolor{mediumorchid}{rgb}{0.73,0.33,0.83}
\definecolor{mediumpurple}{rgb}{0.58,0.44,0.86}
\definecolor{mediumsea}{rgb}{0.24,0.70,0.44}
\definecolor{mediumslate}{rgb}{0.48,0.41,0.93}
\definecolor{mediumspring}{rgb}{0.00,0.98,0.60}
\definecolor{mediumturquoise}{rgb}{0.28,0.82,0.80}
\definecolor{mediumviolet}{rgb}{0.78,0.08,0.52}
\definecolor{midnightblue}{rgb}{0.10,0.10,0.44}
\definecolor{mintcream}{rgb}{0.96,1.00,0.98}
\definecolor{mistyrose}{rgb}{1.00,0.89,0.88}
\definecolor{moccasin}{rgb}{1.00,0.89,0.71}
\definecolor{navajowhite}{rgb}{1.00,0.87,0.68}
\definecolor{navyblue}{rgb}{0.00,0.00,0.50}
\definecolor{navy}{rgb}{0.00,0.00,0.50}
\definecolor{oldlace}{rgb}{0.99,0.96,0.90}
\definecolor{olivedrab}{rgb}{0.42,0.56,0.14}
\definecolor{orange1}{rgb}{1.00,0.65,0.00}
\definecolor{orange2}{rgb}{0.93,0.60,0.00}
\definecolor{orange3}{rgb}{0.80,0.52,0.00}
\definecolor{orange4}{rgb}{0.55,0.35,0.00}
\definecolor{orangered}{rgb}{1.00,0.27,0.00}
\definecolor{orange}{rgb}{1.00,0.65,0.00}
\definecolor{orchid1}{rgb}{1.00,0.51,0.98}
\definecolor{orchid2}{rgb}{0.93,0.48,0.91}
\definecolor{orchid3}{rgb}{0.80,0.41,0.79}
\definecolor{orchid4}{rgb}{0.55,0.28,0.54}
\definecolor{orchid}{rgb}{0.85,0.44,0.84}
\definecolor{palegoldenrod}{rgb}{0.93,0.91,0.67}
\definecolor{palegreen}{rgb}{0.60,0.98,0.60}
\definecolor{paleturquoise}{rgb}{0.69,0.93,0.93}
\definecolor{paleviolet}{rgb}{0.86,0.44,0.58}
\definecolor{papayawhip}{rgb}{1.00,0.94,0.84}
\definecolor{peachpuff}{rgb}{1.00,0.85,0.73}
\definecolor{peru}{rgb}{0.80,0.52,0.25}
\definecolor{pink1}{rgb}{1.00,0.71,0.77}
\definecolor{pink2}{rgb}{0.93,0.66,0.72}
\definecolor{pink3}{rgb}{0.80,0.57,0.62}
\definecolor{pink4}{rgb}{0.55,0.39,0.42}
\definecolor{pink}{rgb}{1.00,0.75,0.80}
\definecolor{plum1}{rgb}{1.00,0.73,1.00}
\definecolor{plum2}{rgb}{0.93,0.68,0.93}
\definecolor{plum3}{rgb}{0.80,0.59,0.80}
\definecolor{plum4}{rgb}{0.55,0.40,0.55}
\definecolor{plum}{rgb}{0.87,0.63,0.87}
\definecolor{powderblue}{rgb}{0.69,0.88,0.90}
\definecolor{purple1}{rgb}{0.61,0.19,1.00}
\definecolor{purple2}{rgb}{0.57,0.17,0.93}
\definecolor{purple3}{rgb}{0.49,0.15,0.80}
\definecolor{purple4}{rgb}{0.33,0.10,0.55}
\definecolor{purple}{rgb}{0.63,0.13,0.94}
\definecolor{red1}{rgb}{1.00,0.00,0.00}
\definecolor{red2}{rgb}{0.93,0.00,0.00}
\definecolor{red3}{rgb}{0.80,0.00,0.00}
\definecolor{red4}{rgb}{0.55,0.00,0.00}
\definecolor{red}{rgb}{1.00,0.00,0.00}
\definecolor{rosybrown}{rgb}{0.74,0.56,0.56}
\definecolor{royalblue}{rgb}{0.25,0.41,0.88}
\definecolor{saddlebrown}{rgb}{0.55,0.27,0.07}
\definecolor{salmon1}{rgb}{1.00,0.55,0.41}
\definecolor{salmon2}{rgb}{0.93,0.51,0.38}
\definecolor{salmon3}{rgb}{0.80,0.44,0.33}
\definecolor{salmon4}{rgb}{0.55,0.30,0.22}
\definecolor{salmon}{rgb}{0.98,0.50,0.45}
\definecolor{sandybrown}{rgb}{0.96,0.64,0.38}
\definecolor{seagreen}{rgb}{0.18,0.55,0.34}
\definecolor{seashell1}{rgb}{1.00,0.96,0.93}
\definecolor{seashell2}{rgb}{0.93,0.90,0.87}
\definecolor{seashell3}{rgb}{0.80,0.77,0.75}
\definecolor{seashell4}{rgb}{0.55,0.53,0.51}
\definecolor{seashell}{rgb}{1.00,0.96,0.93}
\definecolor{sienna1}{rgb}{1.00,0.51,0.28}
\definecolor{sienna2}{rgb}{0.93,0.47,0.26}
\definecolor{sienna3}{rgb}{0.80,0.41,0.22}
\definecolor{sienna4}{rgb}{0.55,0.28,0.15}
\definecolor{sienna}{rgb}{0.63,0.32,0.18}
\definecolor{skyblue}{rgb}{0.53,0.81,0.92}
\definecolor{slateblue}{rgb}{0.42,0.35,0.80}
\definecolor{slategray}{rgb}{0.44,0.50,0.56}
\definecolor{slategrey}{rgb}{0.44,0.50,0.56}
\definecolor{snow1}{rgb}{1.00,0.98,0.98}
\definecolor{snow2}{rgb}{0.93,0.91,0.91}
\definecolor{snow3}{rgb}{0.80,0.79,0.79}
\definecolor{snow4}{rgb}{0.55,0.54,0.54}
\definecolor{snow}{rgb}{1.00,0.98,0.98}
\definecolor{springgreen}{rgb}{0.00,1.00,0.50}
\definecolor{steelblue}{rgb}{0.27,0.51,0.71}
\definecolor{tan1}{rgb}{1.00,0.65,0.31}
\definecolor{tan2}{rgb}{0.93,0.60,0.29}
\definecolor{tan3}{rgb}{0.80,0.52,0.25}
\definecolor{tan4}{rgb}{0.55,0.35,0.17}
\definecolor{tan}{rgb}{0.82,0.71,0.55}
\definecolor{thistle1}{rgb}{1.00,0.88,1.00}
\definecolor{thistle2}{rgb}{0.93,0.82,0.93}
\definecolor{thistle3}{rgb}{0.80,0.71,0.80}
\definecolor{thistle4}{rgb}{0.55,0.48,0.55}
\definecolor{thistle}{rgb}{0.85,0.75,0.85}
\definecolor{tomato1}{rgb}{1.00,0.39,0.28}
\definecolor{tomato2}{rgb}{0.93,0.36,0.26}
\definecolor{tomato3}{rgb}{0.80,0.31,0.22}
\definecolor{tomato4}{rgb}{0.55,0.21,0.15}
\definecolor{tomato}{rgb}{1.00,0.39,0.28}
\definecolor{turquoise1}{rgb}{0.00,0.96,1.00}
\definecolor{turquoise2}{rgb}{0.00,0.90,0.93}
\definecolor{turquoise3}{rgb}{0.00,0.77,0.80}
\definecolor{turquoise4}{rgb}{0.00,0.53,0.55}
\definecolor{turquoise}{rgb}{0.25,0.88,0.82}
\definecolor{violetred}{rgb}{0.82,0.13,0.56}
\definecolor{violet}{rgb}{0.93,0.51,0.93}
\definecolor{wheat1}{rgb}{1.00,0.91,0.73}
\definecolor{wheat2}{rgb}{0.93,0.85,0.68}
\definecolor{wheat3}{rgb}{0.80,0.73,0.59}
\definecolor{wheat4}{rgb}{0.55,0.49,0.40}
\definecolor{wheat}{rgb}{0.96,0.87,0.70}
\definecolor{whitesmoke}{rgb}{0.96,0.96,0.96}
\definecolor{white}{rgb}{1.00,1.00,1.00}
\definecolor{yellow1}{rgb}{1.00,1.00,0.00}
\definecolor{yellow2}{rgb}{0.93,0.93,0.00}
\definecolor{yellow3}{rgb}{0.80,0.80,0.00}
\definecolor{yellow4}{rgb}{0.55,0.55,0.00}
\definecolor{yellowgreen}{rgb}{0.60,0.80,0.20}
\definecolor{yellow}{rgb}{1.00,1.00,0.00}
\newcommand{\len}{\ell}
\newcommand{\ch}{\widehat{C}}
\newcommand{\Z}{{\mathbb Z}}
\newcommand{\R}{{\mathbb R}}
\newcommand{\N}{{\mathbb N}}
\newcommand{\Om}{\Omega}
\newcommand{\fkc}{B}
\newcommand{\cB}{\mathcal{B}}
\newcommand{\cC}{\mathcal{C}}
\newcommand{\cH}{\mathcal{H}}
\newcommand{\rad}{{\rm rad}}
\DeclareMathOperator{\diam}{diam}
\def\vint_#1{\mathchoice
	{\mathop{\vrule width 6pt height 3 pt depth -2.5pt
			\kern -8pt \intop}\nolimits_{#1}}%
	{\mathop{\vrule width 5pt height 3 pt depth -2.6pt
			\kern -6pt \intop}\nolimits_{#1}}%
	{\mathop{\vrule width 5pt height 3 pt depth -2.6pt
			\kern -6pt \intop}\nolimits_{#1}}%
	{\mathop{\vrule width 5pt height 3 pt depth -2.6pt
			\kern -6pt \intop}\nolimits_{#1}}}
\theoremstyle{plain}
\newtheorem{theorem}[equation]{Theorem}
\newtheorem{corollary}[equation]{Corollary}
\newtheorem{lemma}[equation]{Lemma}
\newtheorem{proposition}[equation]{Proposition}
\numberwithin{equation}{section}
\theoremstyle{definition}
\newtheorem{definition}[equation]{Definition}
\title[Sharp Hausdorff content estimates for accessible boundaries]{Sharp Hausdorff content estimates for accessible boundaries of domains in metric measure spaces of controlled geometry}
\author{Sylvester Eriksson-Bique, Ryan Gibara, Riikka Korte, and Nageswari Shanmugalingam }
\begin{document}
	
	\begin{abstract}
		We give a sharp Hausdorff
content estimate for the size of the  accessible boundary of any domain in a metric measure space of controlled geometry, i.e., 
a complete metric space equipped with a doubling measure supporting a $p$-Poincar\'e inequality for a fixed $1\le p<\infty$.
This answers a question posed by Jonas Azzam. In the process, we extend the result to every doubling gauge in 
metric measure spaces which satisfies a codimension one bound.
	\end{abstract}
	
	\maketitle
	
	\noindent {\small \emph{Key words and phrases}: {John domain, visible boundary, accessible  boundary, Hausdorff content, 
			doubling gauge function, pointwise Poincar\'e inequality.}}
	
	\medskip

	\noindent {\small Mathematics Subject Classification (2020): {Primary:  28A78; Secondary: 30L15, 51F30.
		}

		\section{Introduction}
		
		Consider a domain (that is, an open and connected set) $\Omega\subset \R^d$ and a point $x_0 \in \Omega$. From both a geometric and potential-theoretic perspective, it is relevant to consider the portion of the boundary $\partial\Om$ which can be reached from $x_0$ by well behaved curves. Deemed the \emph{accessible boundary} of $\Omega$ with center $x_0$, for $c\geq 1$ the set $\partial_{x_0} \Omega(c)$ consists of all points $w\in \partial \Omega$ for which there exists a curve 
		$\gamma$ with end points $x_0$ and $w$ such that whenever $z$ is a point in the trajectory 
		of $\gamma$, we have $\ell(\gamma_{z,w})\le c\, d(z,\partial\Om)$. Here,
		$d(z,\partial\Om):=\inf\{d(z,y)\, :\, y\in\partial\Om\}$, and $\gamma_{z,w}$ stands in for \emph{every} subcurve of
		$\gamma$ with end points $z,w$. In the literature, this boundary has also been referred to as the \emph{visible boundary} of the domain. 
		
		Besides their natural geometric appeal, one is often interested in domains for which $\partial_{x_0} \Omega(c)$ is large for some $c\geq 1$. In measuring this size, the cardinality of the accessible boundary is not a good choice from the point of view of geometric measure theory. A sharper notion of size of a set $A$ is given by Hausdorff content:
		\begin{equation}\label{def:hcont}
			\mathcal{H}^s_r(A):=\inf\left\{\,\sum_{i =1}^\infty \diam(A_i)^s \, \colon\, 
			A \subset \bigcup_{i=1}^\infty A_i,\text{ and } \diam(A_i)\leq r \text{ for each }i\in\N\right\}.
		\end{equation}
		Estimates of the accessible boundary in terms of Hausdorff contents are important, for example, in establishing Hardy inequalities on domains \cite{ihlehrtuom, KosLehr09, Lehr14}. 
		
		The present note is concerned with whether a lower bound for the Hausdorff content of $\partial \Omega$ implies a lower bound for the Hausdorff content of $\partial_{x_0} \Omega(c)$ for some $c\geq 1$ and for all $x_0\in\Omega$. This has been answered in the affirmative by \cite{azzam,KNN} with Hausdorff-content exponent $0<s\leq d-1$ on $\partial\Om$ and Hausdorff-content exponent
$t$ with $0<t<s$ on $\partial_{x_0} \Omega(c)$. When $s=d-1$, then this is known to be the best that one can do -- see \cite[page 890]{azzam}. On the other hand, for $0<s<d-1$ it was asked by Azzam in \cite{azzam} whether one can improve the Hausdorff-content exponent of $\partial_{x_0} \Omega(c)$ to $t=s$. Our goal is to prove this sharp result.
		
		\begin{theorem}\label{thm:euclidean}
			Let $0<s<d-1$. For every $\widehat{C} > 1$ there exist constants $c, C \geq 1$ and a constant $\Delta>0$ so that any domain $\Omega \subset \R^d$ and any $\delta\in (0,1)$ satisfy the following property.
			
			If for all $x_0 \in \Omega$ we have that 
			\[
			\mathcal{H}^s_\infty\left(B(x_0,\ch d(x_0,\partial \Omega))\setminus \Omega\right)\geq \delta \, d(x_0,\partial\Om)^s,
			\] 
			then for all $x_0 \in \Omega$ we have that
			\[
			\mathcal{H}^s_\infty(\partial_{x_0} \Omega (c) \cap B(x_0,C\, d(x_0,\partial \Omega)))\geq \Delta\, \delta\,  d(x_0,\partial\Om)^s.
			\] 
		\end{theorem}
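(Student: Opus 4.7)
The plan is to pass from Hausdorff content to a Frostman measure on $\partial\Omega$, construct a rich family of John-type curves reaching much of the measure's support, and then transfer a Frostman lower bound back to Hausdorff content on the accessible boundary. First, apply Frostman's lemma to the hypothesis: for each $x_0 \in \Omega$, produce a Radon measure $\mu = \mu_{x_0}$ supported in $\partial\Omega \cap B(x_0, \widehat{C}\,d(x_0,\partial\Omega))$ with total mass $\gtrsim \delta\,d(x_0,\partial\Omega)^s$ and satisfying $\mu(B(y,r)) \leq r^s$ for every $y \in \R^d$ and $r > 0$. It then suffices to identify a subset $G \subset \supp\mu$ of $\mu$-measure $\gtrsim \delta\,d(x_0,\partial\Omega)^s$ whose points all lie in $\partial_{x_0}\Omega(c) \cap B(x_0, C\,d(x_0,\partial\Omega))$, since the Frostman bound then forces $\mathcal{H}^s_\infty(G) \gtrsim \mu(G)$.

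Next, I would fix a Whitney decomposition of $\Omega$ rooted at a cube $Q_0$ containing $x_0$ and, for each $w \in \supp\mu$, attempt to build a chain of adjacent Whitney cubes $Q_0, Q_1, Q_2, \ldots$ with diameters shrinking to zero and accumulating at $w$. Concatenating segments through the cube centers yields a candidate curve $\gamma_w$ from $x_0$ to $w$, and membership of $w$ in $\partial_{x_0}\Omega(c)$ reduces to the John bound $\ell(\gamma_{z,w}) \leq c\,d(z,\partial\Omega)$ at every vertex. Declare $w$ \emph{bad} if no such chain with the prescribed constant exists; otherwise $w$ is \emph{good}. A stopping-time argument attaches to each bad $w$ an obstruction ball $B_w$ at an intermediate scale $r_w$, contained in $B(x_0, C\,d(x_0,\partial\Omega))$, where the chain first fails the John condition. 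Extract a Vitali-type disjoint subfamily $\{B_{w_i}\}$ whose enlargements cover the bad set, and invoke the hypothesis at a carefully chosen point inside each $B_{w_i}$ to control the local boundary content.

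The core step is to show that the bad set $B = \bigcup_i 5B_{w_i}$ satisfies $\mu(B) \leq \tfrac12 \mu(\supp\mu)$, so that $G = \supp\mu \setminus B$ inherits the required mass. Using $\mu(5B_{w_i}) \leq (5r_{w_i})^s$ together with the hypothesis applied near each $B_{w_i}$, a geometric summation over scales controls the total $\mu$-mass of $B$. The strict inequality $s < d-1$ enters here essentially: it provides enough codimensional slack for the Whitney chains to be rerouted around a controlled obstruction without losing any exponent, which is what upgrades the earlier estimates $t < s$ of \cite{azzam, KNN} to the sharp $t = s$. Once the Frostman-type mass estimate is obtained for $G$, the conclusion on Hausdorff content follows immediately.

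The main obstacle I anticipate is the scale-by-scale accounting in the previous paragraph. A naive covering of the bad set by obstruction balls, followed by summing $r_{w_i}^s$, produces a bound that degrades across generations and only yields exponent $t < s$ after absorbing the growth into a radius; the improvement to $t = s$ requires a corona-type organization of obstructions together with a judicious choice of the John constant $c$ and the radius $C$ in the conclusion, so that each generation of obstructions consumes only a fixed fraction of a fixed Frostman budget. Balancing these parameters — and showing that the resulting curves can genuinely be realized as continuous paths, not just as sequences of cubes — is the delicate part of the argument.
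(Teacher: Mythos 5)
Your plan is the Azzam/KNN strategy (Frostman measure, Whitney chains, stopping-time obstruction balls, Vitali covering, scale-by-scale summation), and the decisive step --- showing that the bad set carries at most half of the Frostman mass \emph{at the same exponent} $s$ --- is exactly where that strategy is known to lose the exponent and produce only $t<s$. You acknowledge this yourself, but the fix you invoke (``a corona-type organization of obstructions'' and ``rerouting Whitney chains using the codimensional slack $s<d-1$ without losing any exponent'') is never made operational: you do not state what single-scale estimate the slack buys, how a chain is actually rerouted around an obstruction, or why the rerouted chains do not generate new obstructions whose contributions again accumulate across generations. As written, the proposal reproduces the known non-sharp result and leaves the sharp case, i.e.\ the content of the theorem, unproved. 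There is also a smaller inaccuracy: Frostman's lemma applied to the hypothesis gives a measure supported on $B(x_0,\ch\, d(x_0,\partial\Omega))\setminus\Omega$, not on $\partial\Omega$, and points of the support interior to the complement are not accumulation points of Whitney chains in $\Omega$, so the targets must first be traded for the boundary points the chains actually reach.

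For comparison, the paper does not use Frostman's lemma, Whitney cubes, or a mass-of-the-bad-set dichotomy at all. Theorem \ref{thm:euclidean} is deduced from the metric-space Theorem \ref{thm:mainthm} with the gauge $\xi=h_{d-s}$, and that theorem is proved by contradiction against a \emph{fixed} near-optimal cover $\cC$ of the accessible boundary. The rerouting mechanism you are missing is Lemma \ref{lem:ballpointsold}: via the pointwise $p$-Poincar\'e inequality and a restricted maximal function, any countable family of balls whose codimension-$p$ density near the two endpoints is below a threshold $\epsilon_0$ can be avoided by a curve satisfying quantitative John-type conditions. The hypothesis $s<d-1$ (codimension $Q=d-s>p$) enters through Lemma \ref{lem:conentconv}, which converts smallness of $\xi(\cC)$ into the required $h_p$-sparsity with the gain $(\rad(B_0)/\rad(\cB))^{Q-p}$; this gain, absorbed via the expansion parameter $S$ and a pigeonholing in Lemma \ref{lem:iterationstep}, is what makes the estimate lossless. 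The iteration then produces geometrically shrinking $\epsilon$-bad balls whose centers are joined by $c$-John curves (concatenated by Lemma \ref{lem:repeated curve}), and the limit point is an accessible boundary point not covered by $\cC$ --- a contradiction. Because badness is measured against each ball's own gauge value and the same cover is reused at every generation, no cross-scale summation ever appears, which is precisely how the exponent loss in your scheme is avoided. To salvage your approach you would need to supply the analogue of this avoidance lemma and the fixed-cover bookkeeping; without them the proposal has a genuine gap at its core step.
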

		
In fact, we prove a much more general version of this theorem for a metric measure space satisfying 
minimal assumptions and Hausdorff content that is measured with respect to a gauge function.

		In general metric spaces, and for complex sets, we may wish to measure Hausdorff content with respect to a general gauge function. A \emph{gauge function} $\xi$ associates to each open ball $B=B(x,r)$ a number $\xi(B) \in (0,\infty)$. 
The Hausdorff content $\mathcal{H}^\xi_{\infty}$ with respect to a gauge function is defined in Subsection \ref{subsec:hcont}. We will assume that $\xi$ is \emph{doubling}: there exists a constant $D_\xi$ so that for any pair of balls $B',B$ with $B' \cap B\neq\emptyset$ and $\rad(B')\leq \rad(B)\leq 2\rad(B')$, we have $D_\xi^{-1}\xi(B') \leq \xi(B)\leq D_\xi\, \xi(B')$. 
A distinguished doubling gauge is given by the 
standard codimension-$p$ gauge function $h_p(B)=\frac{\mu(B)}{\rad(B)^p}$ for some $1\leq p <\infty$. 
		
We say that $\xi$ has \emph{codimension at least $Q$}, if there is a 
constant $K\geq 1$, so that for all balls $B=B(x,r)$ and all radii $R\geq r$, we have
\begin{equation}\label{eq:codim-def}
	\frac{\xi(B(x,R))}{\xi(B(x,r))}\leq K 
	\frac{h_Q(B(x,R))}{h_Q(B(x,r))}.
\end{equation}
Examples of such gauges are given by $\xi=h_{Q+\sigma}$ for any $\sigma\ge 0$.
A reader uneasy with this generality may consider only gauges of this form. 
Another example is a gauge of the form
\[
\xi(B(x,R))=\frac{\mu(B(x,R))}{R^Q\, (\log(e+R))^q}
\]
for any $q>0$.
		
Codimensional Hausdorff measures such as $h_Q$, $Q>0$, play a crucial role in
the geometric analysis of subsets of metric measure spaces where the measure is not necessarily Ahlfors regular.
For instance, $h_1$ plays a key role in the study of measure-theoretic boundaries of sets of finite 
perimeter~\cite{AMP04, FSS03, HMM22, Lah20-B},
and $h_\theta$ for certain choices of $\theta>0$ plays a key role in understanding traces of Sobolev functions,
as for example in~\cite{BBS-hyp-fill, GibKorSh, GibSh2, Maly}.
More general gauge functions such as the last example above occur naturally in probability, for instance,
in Brownian motion~\cite[page~60]{Mattila}, see~\cite{KaenSuom07, Suomala08} for more examples.
		
		In~\cite{GibKort22}, under the hypothesis of Ahlfors regularity of the measure $\mu$,
a version of the following theorem was obtained, with the Hausdorff content used in the conclusion~\eqref{eq:conc-gauge} 
having exponent strictly less than that of the hypothesis~\eqref{eq:hyp-gauge}, with the gauge-related Hausdorff contents
replaced by Hausdorff contents. Subsequently, in~\cite{previouspaper}, the authors improved the results of~\cite{GibKort22} by expanding to
the case that $\mu$ is doubling but not necessarily Ahlfors regular. The gauge function considered there was
$\xi=h_p$ in~\eqref{eq:hyp-gauge}, and the conclusion was again weaker in that the gauge function in~\eqref{eq:conc-gauge}
was $h_q$ for some $q>p$.  
As in the Euclidean case, the sharp result with Hausdorff content exponents in both the hypothesis and 
conclusion being equal presented a technical obstruction.
In the present paper, we overcome this obstruction
at the expense of a more involved proof than what was done in~\cite{previouspaper}, 
leading to the following result. This solves the aforementioned 
open problem of Azzam, as given in Theorem~\ref{thm:euclidean} above in the Euclidean case, which now follows
from Theorem~\ref{thm:mainthm} with the use of the gauge function $\xi=h_{d-s}$.

To prove Theorem~\ref{thm:mainthm}, we developed a new technique based on studying an avoidance property
for curves, see Lemma~\ref{lem:ballpointsold} and Lemma~\ref{lem:iterationstep}; these two
lemmas are also of independent interest. Indeed, similar ideas have been useful elsewhere and can be found in~\cite[Theorem 1.1]{lahticapacity}, and in \cite[Section 4]{Panu}.

\begin{theorem}\label{thm:mainthm}
	Let $(X,d,\mu)$ be a complete metric measure space with $\mu$ a doubling measure supporting a 
$p$-Poincar\'e inequality for some 
$1\leq p<\infty$, and let $\xi$ be a doubling gauge function which has codimension strictly greater than $p$. For every $\widehat{C} > 1$ there exist constants $c_2, C_2 \geq 1$ and a constant $\Delta>0$ so that any domain $\Omega \subset X$ and any $\delta\in (0,1)$ satisfy the following property.
			
If for all $x_0 \in \Omega$ we have that
\begin{equation}\label{eq:hyp-gauge}
	\mathcal{H}^\xi_{d(x_0,\partial \Omega)}\left(B(x_0,\ch d(x_0,\partial \Omega))\setminus \Omega\right)\geq \delta\, \xi(B(x_0,d(x_0,\partial \Omega))),
\end{equation}
then for all $x_0 \in \Omega$ we have that
\begin{equation}\label{eq:conc-gauge}
	\mathcal{H}^\xi_{d(x_0,\partial \Omega)}\left(\partial_{x_0} \Omega (c_2) \cap B(x_0,C_2 d(x_0,\partial \Omega))\right)\geq \Delta\, \delta \,\xi(B(x_0,d(x_0,\partial \Omega))).
\end{equation}
\end{theorem}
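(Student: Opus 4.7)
The plan is to construct, for each $x_0\in\Om$ with $d(x_0,\partial\Om)=1$ (by scaling), a rich family of uniformly $c_2$-visible curves emanating from $x_0$ whose endpoints on $\partial\Om$ form a Borel set of $\xi$-content at least $\Delta\,\delta\,\xi(B(x_0,1))$. The visibility will be produced by a dyadic multi-scale concatenation: I build a tower of points $x_0,x_1,x_2,\ldots$ with $d(x_k,\partial\Om)\approx 2^{-k}$, connected by curves $\gamma_k$ of length $\lesssim 2^{-k}$, so that the concatenation terminates at a point $w=\lim_k x_k\in\partial\Om$ and is $c_2$-visible with $c_2\sim\sum_k 2^{-k}$. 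The task is to parametrize, rather than merely exhibit, a rich family of such towers.

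The first ingredient is a content-to-modulus transfer. Combining the $p$-Poincar\'e inequality, doubling of $\mu$, and the codimension-$Q$ hypothesis $Q>p$ on $\xi$, one finds that for any subset $E\subset B_k\setminus\Om$ the $p$-modulus of the family of rectifiable curves from $x_k$ meeting $E$ is controlled below by a definite positive power of $\mathcal{H}^\xi_{2^{-k}}(E)/\xi(B_k)$, along the lines of~\cite{previouspaper}. Dually, the set of endpoints at scale $2^{-k}$ that are forced to pass through a region of small $\xi$-content has itself small $\xi$-content; this is the avoidance principle encoded in Lemma~\ref{lem:ballpointsold}.

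The second and crucial ingredient is the iteration, Lemma~\ref{lem:iterationstep}. Given a ``live'' destination set $D_k\subset\partial\Om$ with $\mathcal{H}^\xi_{2^{-k}}(D_k)\ge \alpha_k\,\xi(B_k)$ and an intermediate point $x_k$, the lemma produces a refined set $D_{k+1}\subset D_k$, a choice of next point $x_{k+1}$, and a short bridge $\gamma_k$ from $x_k$ to $x_{k+1}$ such that $\gamma_k$ avoids a prescribed bad region at scale $2^{-k}$ and $\alpha_{k+1}\ge (1-\eta_k)\alpha_k$ with $\sum_k\eta_k<\infty$. Iterating yields a limit set $D_\infty=\bigcap_k D_k$ of $\xi$-content at least a universal multiple of $\alpha_0$, and each $w\in D_\infty$ is realised as the terminus of a genuine $c_2$-visible curve. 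Starting the iteration from $D_0=B(x_0,\ch)\setminus\Om$ with the input~\eqref{eq:hyp-gauge} will produce the conclusion~\eqref{eq:conc-gauge}.

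The main obstacle, and the reason the sharp statement eluded~\cite{GibKort22, previouspaper}, is the preservation of the \emph{same} gauge $\xi$ at every scale of the iteration; those earlier arguments tolerated a logarithmic or polynomial inflation per scale, which, once summed, forced a strictly larger exponent in the conclusion. The technical heart of the present argument will be showing that the losses $\eta_k$ in Lemma~\ref{lem:iterationstep} are geometrically summable, which requires that the avoidance estimate from Lemma~\ref{lem:ballpointsold} give a gain proportional to $2^{-k(Q-p)}$; this is precisely where the strict codimension gap $Q>p$ is used. Once this tight iterative step is in place, the constants $c_2$, $C_2$, and $\Delta$ emerge by accumulating the geometric constants from doubling, the Poincar\'e inequality, and the convergent series in the iteration.
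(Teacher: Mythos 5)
Your outline is not a proof, and the two places where it asserts the most are exactly where the actual difficulty lies. First, you attribute to Lemma~\ref{lem:iterationstep} a statement it does not make: a ``mass-preserving'' refinement of destination sets $D_k\subset\partial\Omega$ with losses $\alpha_{k+1}\ge(1-\eta_k)\alpha_k$ and $\sum_k\eta_k<\infty$. No argument is offered for why such a nearly lossless refinement exists, and producing it is the whole problem: the hypothesis~\eqref{eq:hyp-gauge} gives a fixed proportion $\delta$ of content at every scale and center, but it says nothing about a single set retaining its content under repeated refinement relative to the shrinking normalizers $\xi(B_k)$. The paper's Lemma~\ref{lem:iterationstep} is of a different nature: it is a dichotomy about $\epsilon$-bad balls \emph{relative to a fixed countable cover $\cC$ of the accessible boundary}, and it is used inside a proof by contradiction (if the accessible boundary had small content, take a cheap cover $\cC$; the dichotomy either produces an accessible point outside $\tau\cC$, or a geometrically smaller $\epsilon$-bad ball reachable by a John curve; iterating and concatenating via Lemma~\ref{lem:repeated curve} produces an accessible limit point that $\cC$ cannot cover because of the radius constraint $\rad(\cC|_{B(x_i,16r_i)})\le r_i$ and $r_i\to0$). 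Your scheme has no counterpart of this covering/contradiction mechanism.

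Second, your construction conflates the destination set with the accessible set. A single tower $x_0,x_1,x_2,\dots$ with $d(x_k,\partial\Omega)\approx2^{-k}$ produces, in the limit, \emph{one} accessible boundary point, while $D_\infty=\bigcap_k D_k$ is a subset of $B(x_0,\ch\, d(x_0,\partial\Omega))\setminus\Omega$ whose points are never shown to be endpoints of John curves (and in general they need not be accessible at all). To convert ``large content of the complement'' into ``large content of accessible points'' you would need a branching tree of towers, one per limit point, together with content bookkeeping across branches at the same gauge $\xi$ -- and that bookkeeping is precisely what defeated the earlier non-sharp arguments. Relatedly, your proposed ``content-to-modulus transfer'' bounding $p$-modulus below by a \emph{power} of the content ratio would reintroduce exponent loss; the paper avoids modulus altogether, proving the avoidance Lemma~\ref{lem:ballpointsold} from the pointwise Poincar\'e inequality and a restricted maximal function bound, and the codimension gap $Q>p$ enters through the comparison Lemma~\ref{lem:conentconv} and the choice of the expansion parameter $S$ in Lemma~\ref{lem:iterationstep}, not through a per-scale gain $2^{-k(Q-p)}$ in a summable-loss iteration. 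As it stands, the proposal identifies the right auxiliary tools but replaces the paper's actual argument with an unproved (and, in the stated form, unworkable) mass-retention scheme.
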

		
We assume in Theorems \ref{thm:euclidean} and \ref{thm:mainthm} a content lower bound for the 
complement of $\Omega$. In contrast, the paper of Heinonen~\cite{HeiWall} and the papers of V\"ais\"al\"a~\cite{VWall-E, VWall}
consider the question of which domains in $\R^d$ satisfy the property
\[
\mathcal{H}^{d-1}_\infty(\partial \Omega \cap B(x_0,\ch d(x_0,\partial \Omega)))\geq \delta \,d(x_0,\partial\Om)^{d-1},
\] 
for each $x_0\in\Om$. The hypothesis in the first of our main theorems is equivalent to the above 
property when $s=d-1$. Further, since the accessible boundary is a subset of the topological boundary, we also obtain the following non-trivial fact.
		
\begin{corollary}\label{cor:boundarysize} 
	Let $(X,d,\mu)$ be a complete metric measure space with $\mu$ a doubling measure supporting a 
$p$-Poincar\'e inequality for some
$1\leq p <\infty$, and let $\xi$ be a doubling gauge function which has codimension strictly greater than $p$. For every $\widehat{C} \geq 1$ there exist a constant $C_2 \geq 1$ and a constant $\Delta>0$ so that any domain $\Omega \subset X$ and any $\delta\in (0,1)$ satisfy the following property.
			
If for all $x_0 \in \Omega$ we have that 
\[
	\mathcal{H}^\xi_{d(x_0,\partial \Omega)}\left(B(x_0,\ch d(x_0,\partial \Omega))\setminus \Omega\right)\geq \delta\, \xi(B(x_0,d(x_0,\partial \Omega))),
\] 
then for all $x_0 \in \Omega$ we have that
\[
\mathcal{H}^\xi_{d(x_0,\partial \Omega)}\left(\partial \Omega \cap B(x_0,C_2 d(x_0,\partial \Omega))\right)\geq \Delta \,\delta\, \xi(B(x_0,d(x_0,\partial \Omega))).
\] 
\end{corollary}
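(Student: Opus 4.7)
The plan is to deduce Corollary \ref{cor:boundarysize} as an essentially immediate consequence of Theorem \ref{thm:mainthm}. The hypotheses of the corollary coincide verbatim with those of the theorem, so I would begin by invoking Theorem \ref{thm:mainthm} for the given domain $\Omega$, using the same parameter $\widehat{C}$ and the same lower-bound assumption on $B(x_0,\widehat{C} d(x_0,\partial\Omega))\setminus\Omega$. This produces constants $c_2, C_2 \geq 1$ and $\Delta>0$ depending only on $\widehat{C}$ and the structural constants of $(X,d,\mu)$ and $\xi$, together with the lower bound
\[
\mathcal{H}^\xi_{d(x_0,\partial\Omega)}\bigl(\partial_{x_0}\Omega(c_2) \cap B(x_0, C_2\, d(x_0,\partial\Omega))\bigr) \geq \Delta\, \delta\, \xi\bigl(B(x_0, d(x_0,\partial\Omega))\bigr)
\]
for every $x_0 \in \Omega$. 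The parameter $c_2$ will play no further role in the corollary and may be discarded; only $C_2$ and $\Delta$ appear in the conclusion being proved.

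Next I would record the set-theoretic inclusion $\partial_{x_0}\Omega(c_2) \subset \partial\Omega$, which is immediate from the definition of the accessible boundary given in the introduction: the point $w$ reached by the admissible curve is by construction a boundary point. Intersecting both sides with the ball $B(x_0, C_2\, d(x_0,\partial\Omega))$ preserves the inclusion, giving
\[
\partial_{x_0}\Omega(c_2) \cap B(x_0, C_2\, d(x_0,\partial\Omega)) \subset \partial\Omega \cap B(x_0, C_2\, d(x_0,\partial\Omega)).
\]

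Finally, I would appeal to the monotonicity of the Hausdorff content $\mathcal{H}^\xi_r$ with respect to set inclusion, which is built into the definition of $\mathcal{H}^\xi_\infty$ (and its truncated version) as an infimum over admissible covers: any cover of the larger set is in particular a cover of the smaller set. Combining this monotonicity with the lower bound supplied by Theorem \ref{thm:mainthm} yields
\[
\mathcal{H}^\xi_{d(x_0,\partial\Omega)}\bigl(\partial\Omega \cap B(x_0, C_2\, d(x_0,\partial\Omega))\bigr) \geq \Delta\, \delta\, \xi\bigl(B(x_0, d(x_0,\partial\Omega))\bigr),
\]
which is the desired conclusion with the same constants $C_2$ and $\Delta$.

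There is no genuine obstacle to overcome here: the entire analytic and geometric content sits inside Theorem \ref{thm:mainthm}, and the corollary amounts merely to forgetting the restriction to accessible points encoded by the curve parameter $c_2$. The only item worth sanity-checking is that Theorem \ref{thm:mainthm} is stated at precisely the same truncation scale $r = d(x_0,\partial\Omega)$ that appears in the corollary, so that no rescaling of the Hausdorff content is required when passing from accessible boundary to full boundary.
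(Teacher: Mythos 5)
Your deduction is correct: since by definition $\partial_{x_0}\Omega(c_2)\subset\partial\Omega$, intersecting with $B(x_0,C_2\,d(x_0,\partial\Omega))$ and using monotonicity of $\mathcal{H}^\xi_{d(x_0,\partial\Omega)}$ under inclusion turns the conclusion of Theorem~\ref{thm:mainthm} into that of Corollary~\ref{cor:boundarysize}, and this is exactly how the corollary is obtained from the theorem in the introduction. Two remarks, though. First, a small mismatch you gloss over when saying the hypotheses ``coincide verbatim'': the corollary allows $\widehat{C}\geq 1$ while Theorem~\ref{thm:mainthm} requires $\widehat{C}>1$; this is harmless because the hypothesis is monotone in $\widehat{C}$ (enlarging the ball only enlarges $B(x_0,\widehat{C}d(x_0,\partial\Omega))\setminus\Omega$ and hence its content), so one simply applies the theorem with, say, $2\widehat{C}$, but it deserves a sentence. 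Second, and more substantively, the paper's dedicated proof of this corollary (Appendix~\ref{sec:appendix}, via Theorem~\ref{thm:mainthm-geodesic-bdry}) is a genuinely different and considerably simpler argument: it never invokes Theorem~\ref{thm:mainthm}, and in particular avoids the John-curve iteration of Lemma~\ref{lem:iterationstep} and the limiting construction of Lemma~\ref{lem:repeated curve}; instead it argues directly by contradiction, covering the boundary by a low-content collection $\mathcal{B}$, using the avoidance property of Lemma~\ref{lem:ballpointsold} (any curve from $x_0$ to an uncovered point of $B(x_0,2r_0)\setminus\Omega$ must cross $\partial\Omega$, hence $\mathcal{B}$), then Lemma~\ref{lem:conentconv} and a $5$-covering/disjointness argument to contradict the content lower bound. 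Your route is shorter on paper but carries the full weight of the much harder Theorem~\ref{thm:mainthm}; the appendix proof buys a self-contained, low-technology argument for the weaker statement about the topological boundary, which is precisely why the paper recommends reading it before Section~\ref{sec:proof}.
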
 
	
In fact, this statement has a direct proof which is considerably simpler than that of Theorem \ref{thm:mainthm}, avoiding 
many of the technicalities. We will present it in Appendix \ref{sec:appendix}. Thus, a reader may wish to read the paper 
in the following order: first read the preparatory material in Sections \ref{sec:prelim} and  \ref{sec:preparation} followed 
by Appendix~\ref{sec:appendix} and only then read Section~\ref{sec:proof}. 
		
\medskip
		
\noindent {\bf Acknowledgement:} 
Nageswari Shanmugalingam's work is partially supported by the NSF (U.S.A.) grant DMS~\#2054960. Sylvester Eriksson-Bique's work was partially supported by the Finnish Academy grants no. 356861 and 354241. Part of the research was conducted during S.~E.-B.'s
visit to the University of Cincinnati in December 2022, and he wishes to thank that institution for its kind hospitality. We also thank Antti V\"ah\"akangas for helpful discussions, which helped in the writing of the paper.

\section{Preliminaries}\label{sec:prelim}
		
Let $(X,d,\mu)$ be a complete and separable metric measure space equipped with 
a Radon measure $\mu$. The open ball in $X$, centered at a point $x\in X$ and with
radius $r>0$, is denoted by $B(x,r)=\{y \in X: d(x,y)<r\}$. 
Each ball is considered to have an associated radius and center, and thus is formally a triple. 
Different radii and centers may define the same 
set, but we consider these as different balls. 
Given any ball $B=B(x,r)$, we refer to its (unique) radius as $\rad(B)=r$. The inflation/deflation of a ball is 
denoted by $CB:=B(x,Cr)$ for $C>0$. Curves are continuous maps $\gamma:I\to X$, where 
$I=[a,b]\subset \R$ is a compact interval. The length of rectifiable curves is denoted $\len(\gamma)$. 
The distance between a point $x\in X$ and a set $A\subset X$ is given by $d(x,A):=\inf_{a\in A} d(x,a)$.

The measure $\mu$ is said to be \emph{doubling} if there exists a constant $D_\mu\geq 1$ so that 
\[
	0<\mu(B(x,2r))\leq D_\mu\mu(B(x,r)) < \infty
\]
for every open ball $B(x,r)\subset X$. With $1\le p<\infty$,
the space $(X,d,\mu)$ is said to satisfy a $p$-Poincar\'e inequality if there exist constants $C_P,\lambda$ so that for all locally integrable functions $f\in L^1_{\rm loc}(X)$ and all upper gradients $g$ of $f$ we have
\begin{equation}\label{eq:piineq}
	\vint_{B(x,r)} |f-f_{B(x,r)}| d\mu \leq C_P r \left(\vint_{B(x,\lambda r)} g^p d\mu\right)^{1/p},
\end{equation}
where $f_A:=\vint_A f d\mu:= \frac{1}{\mu(A)} \int_A f d\mu,$ whenever the final integral is defined and $\mu(A)\in (0,\infty)$. A non-negative Borel function $g:X\to [0,\infty]$ is an upper gradient for $f$ if
\[
	|f(\gamma(0))-f(\gamma(1))|\leq \int_\gamma g \,ds
\] 
for all rectifiable curves $\gamma:[0,1]\to X$. For more background on upper gradients and analysis on metric spaces, 
see~\cite{heinonenkoskela}.
		
	\subsection{Notational conventions}\label{sub:convention}
		
Given two expressions $A$ and $B$, we write $A\lesssim B$ if there is a constant $C$ so that $A\leq C B$ with $C$ depending
only on the data associated with the metric measure space and the doubling constant associated with the
gauge functions used in computing Hausdorff contents. We also say that $A\simeq B$ if $A\lesssim B$ and $B\lesssim A$.
		
Throughout this paper $(X,d,\mu)$ will be a complete metric measure space, where $\mu$ is doubling and $X$ supports a 
$p$-Poincar\'e inequality. Such spaces $X$ satisfy two further properties: \emph{quasiconvexity} and 
\emph{the metric doubling property}, see
for instance~\cite[Theorem~8.3.2]{HKST}. A space $X$ is said to be $L$-quasiconvex if for each $x,y\in X$
there is a curve $\gamma$ with end points $x,y$ such that $\ell(\gamma)\le L\, d(x,y)$. We will repeatedly, and implicitly, use the fact that $d(x,X\setminus \Omega)\le L \, d(x,\partial \Omega)$ for $L$-quasiconvex spaces. We say that $X$ is $N_1$-metric doubling
for some $N_1\ge 1$ if for each $x\in X$ and $r>0$ there are at most $N_1$ number of points in $B(x,r)$ that are $r/2$-separated.
Observe that a metric space equipped with a $D$-doubling measure is necessarily $D^4$-metric doubling.

We denote the doubling constants associated with the gauge $\xi$ and the measure $\mu$ by
$D_\xi, D_\mu$ respectively. 
However, as their precise values are not kept track of in this work, we will replace $D_\xi$ and $D_\mu$ with 
$D=\max\{D_\xi,D_\mu\}$.

	\subsection{Hausdorff content}\label{subsec:hcont}
		
Let $\xi$ be a gauge function. If $\cC$ is a countable collection of open balls in $X$, we set  
$\rad(\cC):=\sup_{B\in \cC} \rad(B)$.
We also define
\[
	\xi(\cC):=\sum_{B\in \cC} \xi(B),
\]
$\bigcup \cC := \bigcup_{\fkc\in \cC} B$,
and for $C>0$ we set
$C\cC = \{C\fkc : \fkc \in \cC\}$. If $A\subset \bigcup \cC$, we say that 
$\cC$ is a cover of $A$, or that $\cC$ covers $A$. We further define the restricted collection $\cC|_A=\{B\in \cC : B \cap A \neq \emptyset\}$. 
It is worth noting that the two operations, restriction $\cC|_A$ and inflation $C\cC$, do not commute. In our notation we will use parenthesis to indicate the order of operations. Primarily we will use the order $(C\cC)|_A$, that is, we first inflate the balls and then restrict. 
		
Given a subset $A\subset X$ and $r\in (0,\infty]$,
we define the Hausdorff content at scale $r$ with respect to the gauge $\xi$ by 
\[
\cH^\xi_r(A):=\inf \{ \xi(\cC) \, \colon \, A\subset \bigcup \cC, \, \rad(\cC) \leq r, \, \text{and } \cC \text{ is countable}\}.
\]
		
The following lemma allows us to compare Hausdorff content at different scales. 
		
\begin{lemma}\label{lem:si-cont-scale-change}
Let $\tau\ge 1$ and suppose that $\xi$ is a doubling gauge function. Then there is a constant 
$C(\tau)>0$, depending on $D$, such that 
for each $K\subset X$ and $r>0$ we have 
\[
\cH^\xi_r(K)\le C(\tau)\, \cH^\xi_{\tau r}(K).
\]
\end{lemma}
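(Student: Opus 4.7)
The inequality $\cH^\xi_{\tau r}(K)\le\cH^\xi_r(K)$ is immediate since every cover by balls of radius at most $r$ is also a cover by balls of radius at most $\tau r$, so the content of the lemma lies in the reverse comparison. My strategy is to take an almost-optimal covering $\cC$ of $K$ realising $\cH^\xi_{\tau r}(K)$ and to replace each ball of radius larger than $r$ by a controlled number of smaller balls, in such a way that the total $\xi$-mass increases only by a multiplicative factor depending on $\tau$ and $D$.

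The main tool is the metric doubling property of $X$, recorded in Subsection \ref{sub:convention}: it guarantees a constant $N_0=N_0(D)$ so that every ball of radius $\rho$ can be covered by at most $N_0$ balls of radius $\rho/2$ (whose centres lie in the parent ball, so each child is contained in the parent). Iterating this $k=\lceil\log_2\tau\rceil$ times, any ball $B\in\cC$ with $\rad(B)\le\tau r$ is covered by at most $N_0^k$ balls of radius at most $r$. Call this collection $\cC_B$, and replace $B\in\cC$ by $\cC_B$; if $\rad(B)\le r$ already, we set $\cC_B=\{B\}$. The union $\cC^* = \bigcup_{B\in\cC}\cC_B$ is a cover of $K$ with $\rad(\cC^*)\le r$.

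It remains to bound $\xi(\cC^*)$. For any ball $B'\in\cC_B$, there is a chain $B'=B_k\subset B_{k-1}\subset\cdots\subset B_0=B$ of ancestors in the halving construction, with $\rad(B_{j-1})=2\rad(B_j)$ and $B_j\cap B_{j-1}\ne\emptyset$. The doubling hypothesis on $\xi$ applied to each consecutive pair yields $\xi(B_j)\le D_\xi\,\xi(B_{j-1})$, and iterating gives $\xi(B')\le D_\xi^k\,\xi(B)$. Since $|\cC_B|\le N_0^k$, summing over $\cC_B$ produces
\[
\xi(\cC_B)\le (N_0 D_\xi)^k\,\xi(B).
\]
Summing over $B\in\cC$ and setting $C(\tau):=(N_0 D_\xi)^{\lceil\log_2\tau\rceil}$, which depends only on $\tau$ and $D$, we obtain $\xi(\cC^*)\le C(\tau)\,\xi(\cC)$. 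Taking the infimum over near-optimal covers $\cC$ of $K$ with $\rad(\cC)\le\tau r$ yields the claim.

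The argument is essentially routine; the only point requiring a touch of care is keeping the iterated doubling condition applicable at each generation, which is ensured by the fact that each child in the halving construction lies inside, hence intersects, its parent, and has exactly half its radius.
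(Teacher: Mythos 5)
Your proof is correct and follows essentially the same route as the paper's: cover $K$ at scale $\tau r$, split each ball into boundedly many balls of radius at most $r$ by iterating the metric doubling property, and control the $\xi$-mass by chaining the gauge doubling condition, then infimize over covers. One small remark: a child ball of radius $\rho/2$ centred in its parent need not be \emph{contained} in the parent, but this is harmless since your argument only uses that each child intersects its parent and has exactly half its radius, which the construction does guarantee.
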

		
\begin{proof} The argument is similar to \cite[Lemma 2.6]{previouspaper}.
Now, let $\cC$ be a covering of $K$ by balls with $\rad(\cC)\leq \tau r$. By iterating the metric doubling property, each of the balls in $\cC$ can be covered by $N'$ balls of radius $r$, for some fixed $N'\in \N$. Collect all of the resulting balls into a collection $\cC'$. Then, applying the doubling property of the $\xi$ we get a constant $C(\tau)$ so that 
\[
\cH^\xi_r(K)\leq \xi(\cC')\leq C(\tau) \xi(\cC).
\]
Infimizing over all such covers $\cC$ yields the claim.
\end{proof}
		
	At various points throughout the paper, we will need to compare $\cH^\xi_r$ when $\xi$ is a doubling gauge function of codimension at least $Q>0$ to $Q$-codimensional Hausdorff content, as well as the $Q$-codimensional Hausdorff content to the $p$-codimensional Hausdorff content for $0<p<Q$. The following lemma is the way we obtain these estimates.

\begin{lemma}\label{lem:conentconv} 
Suppose that $\xi$ is a doubling gauge function with codimension at least $Q>p>0$. There exists a constant $K_1$ so that the following holds. Let $B_0=B(x_0,r_0)$ be any ball, and let $\cB$ be a collection of balls with $\rad(\cB)\leq r_0$ and so that for every $B'\in \cB$ we have $B'\cap B(x_0,r_0) \neq \emptyset$. It then holds that 
\begin{equation}\label{eq:xisum}
	K_1\frac{\xi(\cB)}{\xi(B_0)}\geq \frac{h_Q(\cB)}{h_Q(B_0)} \geq \frac{h_p(\cB)}{h_p(B_0)}\frac{\rad(B_0)^{Q-p}}{\rad(\cB)^{Q-p}}.
\end{equation}
The constant $K_1$ depends only on the doubling constants of $\xi$ and 
the measure $\mu$, and the codimensionality constant $K$ in \eqref{eq:codim-def}.
\end{lemma}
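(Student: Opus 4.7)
The statement packages two quite different inequalities, so I would treat them separately and then combine. Throughout, I would sum the per-ball estimates over all $B'\in\cB$ at the end.

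For the second inequality, which is purely algebraic, I would just unfold the definition $h_p(B)=\mu(B)/\rad(B)^p$ for each $B'\in\cB$ to write
\[
\frac{h_Q(B')}{h_Q(B_0)}=\frac{h_p(B')}{h_p(B_0)}\cdot\frac{\rad(B_0)^{Q-p}}{\rad(B')^{Q-p}}.
\]
Since $Q>p$ and $\rad(B')\le \rad(\cB)$, the last factor is at least $\rad(B_0)^{Q-p}/\rad(\cB)^{Q-p}$, which is independent of $B'$; summing over $\cB$ yields the claim. This part does not use any geometric hypothesis beyond $\rad(\cB)\le r_0$.

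The first inequality is where the codimension assumption is used, and it is the part I would think of as the real content. The plan is to reduce to comparing the ratio $\xi/h_Q$ on a small ball to its value on a big concentric ball, since \eqref{eq:codim-def} rewritten reads exactly
\[
\frac{\xi(B(x,r))}{h_Q(B(x,r))}\ \ge\ K^{-1}\,\frac{\xi(B(x,R))}{h_Q(B(x,R))}\qquad(R\ge r),
\]
i.e.\ $\xi/h_Q$ is essentially nonincreasing in radius along concentric balls. For $B'=B(y,s)\in\cB$, since $s\le r_0$, apply this with $r=s$, $R=r_0$ to get $\xi(B')/h_Q(B')\gtrsim \xi(B(y,r_0))/h_Q(B(y,r_0))$.

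The remaining step is a standard doubling transfer: because $B'\cap B_0\neq\emptyset$ and $\rad(B')\le r_0$, any $z\in B'\cap B_0$ satisfies $d(y,x_0)<s+r_0\le 2r_0$, so $B(y,r_0)$ and $B_0$ have the same radius and intersect (e.g.\ $z\in B(y,r_0)\cap B_0$). Hence by the doubling of $\xi$ and of $\mu$ (applied once or twice through an inclusion like $B_0\subset B(y,3r_0)$) one has $\xi(B(y,r_0))\simeq \xi(B_0)$ and $h_Q(B(y,r_0))\simeq h_Q(B_0)$, with constants depending only on $D$. Combining these two steps gives $\xi(B')/h_Q(B')\ge c_0\,\xi(B_0)/h_Q(B_0)$ for a constant $c_0=c_0(K,D)>0$, which rearranges to $h_Q(B')/h_Q(B_0)\le c_0^{-1}\,\xi(B')/\xi(B_0)$. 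Summing over $B'\in\cB$ and setting $K_1=c_0^{-1}$ yields the left inequality. The only mild obstacle is bookkeeping the doubling transfer between the off-center ball $B(y,r_0)$ and $B_0$; everything else follows directly from the codimension and doubling hypotheses.
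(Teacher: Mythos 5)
Your proposal is correct and follows essentially the same route as the paper: apply the codimension inequality \eqref{eq:codim-def} concentrically at the center of each $B'\in\cB$ to pass from its small radius up to a radius comparable to $r_0$ (the paper uses $B(x_{B'},3r_0)$, you use $B(y,r_0)$), then transfer to $B_0$ by the doubling of $\xi$ and $\mu$, sum over $\cB$, and handle the second inequality by the same purely algebraic identity. The only differences are cosmetic bookkeeping in the doubling constants, so no gap.
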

		
\begin{proof} 
Let $x_B,r_B$ be the center and radius of any ball $B$. By the codimension assumption, see~\eqref{eq:codim-def}, we 
have for all $x\in X$ and all $R>r>0$ that
\begin{equation}
\frac{\xi(B(x,r))}{\xi(B(x,R))}\geq K^{-1} 
\frac{h_Q(B(x,r))}{h_Q(B(x,R))}.
\end{equation}
Thus, for any ball $B' \in \cB$ we get
\begin{equation}\label{eq:xihqcodim}
	\frac{\xi(B')}{\xi(B(x_{B'},3r_0))}\geq K^{-1} 
	\frac{h_Q(B')}{h_Q(B(x_{B'},3r_0)}.
\end{equation}
			
	We have $B(x_0,r_0)\subset B(x_{B'}, 3r_0) \subset B(x_0,5r_0)$. Thus, by doubling, we have $\xi(B(x_{B'},3r_0))\geq D^{-2}\xi(B(x_0,r_0))$ and $h_Q(B(x_{B'},3r_0)\leq D^{3}h_Q(B(x_0,r_0))$. Consequently, from these and \eqref{eq:xihqcodim} we get
\begin{equation}
	\frac{\xi(B')}{\xi(B_0)}\geq K^{-1} D^{-5} \,
	\frac{h_Q(B')}{h_Q(B_0)}.
\end{equation}
By summing over all the balls $B'\in \cB$ and by choosing $K_1=KD^{5}$ , we obtain the first part of the claimed inequality \eqref{eq:xisum}.
			
The second part of \eqref{eq:xisum} follows by noting that 
\[
\frac{h_Q(B')}{h_Q(B_0)}= \frac{h_p(B')}{h_p(B_0)} \frac{\rad(B_0)^{Q-p}}{\rad(B')^{Q-p}} \geq \frac{h_p(B')}{h_p(B_0)} \frac{\rad(B_0)^{Q-p}}{\rad(\cB)^{Q-p}}.\qedhere
\]
\end{proof}

\subsection{John domains}\label{subsec:john}
		
Recall that given $z\in\Om$, $d(z,\partial\Om)=\inf\{d(z,y)\, :\, y\in\partial\Om\}$.
		
\begin{definition}\label{def:johncurve} 
Let $\Omega \subset X$ be an open set, and $x \in \Omega$. A $c$-John curve  
$\gamma$ starting at $x$  is a rectifiable curve 
such that, with $w$ the other end point of $\gamma$,  
for each point $z$ in the trajectory of $\gamma$ we have that
\[
\ell(\gamma_{z,w})\le c\, d(z,\partial\Om).
\]
Here $\gamma_{z,w}$ denotes \emph{each} subcurve of $\gamma$ with end points $z,w$.  
\end{definition}
		
Given a sequence of points geometrically approaching the boundary, we can patch together a sequence of John curves to obtain a John curve from the initial point to the final point. In the limit, we obtain a John curve from the initial point to the boundary.

\begin{lemma}\label{lem:repeated curve} 
	Let $b \in (0,1), c_1\ge 1$. Then there exists $c_2\ge 1$ so that the following holds for all 
	$N\in\N\cup \{\infty\}$.
	Let $x_k \in \Omega$ 
	with $0\leq k < N, k\in \N$ and $x_N\in \overline{\Omega}$, where $\Omega \subset X$ is an open set. Suppose that  
$d(x_{k+1},\partial\Omega)\leq b\, d(x_{k},\partial\Omega)$ for every $k=0,1,\dots, N-1$, and further that 
$x_\infty=\lim_{k\to\infty}x_k$ if $N=\infty$.
			
	If for every $k=0,\dots, N-1$ there exists a $c_1$-John curve $\gamma_k:[0,1]\to X$ starting at $x_k$ with $\gamma_k(1)=x_{k+1}$, then there exists a $c_2$-John curve $\gamma:[0,1]\to X$ starting at 
$x_0$ with $\gamma(1)=x_N$.  This curve is 
obtained as the concatenation of the curves $\gamma_k$, $k=0,\ldots, N-1$.
\end{lemma}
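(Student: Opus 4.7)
The plan is to define $\gamma$ as the concatenation $\gamma_0 \cdot \gamma_1 \cdots \gamma_{N-1}$ (suitably reparametrized on $[0,1]$) and verify the $c_2$-John condition at every point $z$ on this concatenation. When $N=\infty$ there is also a well-definedness issue: applying the $c_1$-John condition on $\gamma_k$ at its starting point $x_k$ gives $\ell(\gamma_k) \le c_1\, d(x_k, \partial\Omega)$, and combined with the iterated bound $d(x_k, \partial\Omega) \le b^k\, d(x_0, \partial\Omega)$ this yields $\sum_{k=0}^{\infty} \ell(\gamma_k) \le c_1\, d(x_0, \partial\Omega)/(1-b) < \infty$. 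The partial concatenations are therefore Cauchy, and since $x_k \to x_\infty$, the limiting curve is continuous on $[0,1]$ with $\gamma(1) = x_\infty$.

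To verify the John condition, I would fix $z$ on $\gamma$, let $k$ be the index with $z$ on the image of $\gamma_k$, and decompose
\[
\ell(\gamma_{z, x_N}) \;=\; \ell((\gamma_k)_{z, x_{k+1}}) \;+\; \sum_{j=k+1}^{N-1} \ell(\gamma_j).
\]
The $c_1$-John condition on $\gamma_k$ directly gives $\ell((\gamma_k)_{z, x_{k+1}}) \le c_1\, d(z, \partial\Omega)$. For each subsequent piece, applying the $c_1$-John condition at the starting point $x_j$ of $\gamma_j$ yields $\ell(\gamma_j) \le c_1\, d(x_j, \partial\Omega) \le c_1 b^{j-k-1} d(x_{k+1}, \partial\Omega)$.

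The main step, and the only place where something nontrivial happens, is to replace $d(x_{k+1}, \partial\Omega)$ by $d(z, \partial\Omega)$ on the right-hand side. For this I would exploit that $\ell((\gamma_k)_{z, x_{k+1}})$ is \emph{already} controlled by $d(z, \partial\Omega)$: since $d(z, x_{k+1}) \le \ell((\gamma_k)_{z, x_{k+1}}) \le c_1\, d(z, \partial\Omega)$, the triangle inequality for $d(\cdot,\partial\Omega)$ gives $d(x_{k+1}, \partial\Omega) \le d(x_{k+1}, z) + d(z, \partial\Omega) \le (c_1+1)\, d(z, \partial\Omega)$. Summing the resulting geometric series then produces
\[
\sum_{j=k+1}^{N-1} \ell(\gamma_j) \;\le\; \frac{c_1(c_1+1)}{1-b}\, d(z, \partial\Omega),
\]
and adding the first piece yields the John inequality with explicit constant $c_2 = c_1 + c_1(c_1+1)/(1-b)$.

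I do not anticipate a serious obstacle. The one point that must be handled carefully, rather than glossed over, is the step linking $d(z, \partial\Omega)$ to $d(x_{k+1}, \partial\Omega)$: without the observation $d(z, x_{k+1}) \le c_1\, d(z, \partial\Omega)$, the natural comparison would be to $d(x_k, \partial\Omega)$, which is not uniformly comparable to $d(z, \partial\Omega)$ when $z$ lies near the endpoint of $\gamma_k$ and $d(z,\partial\Omega)$ could in principle be much smaller than $d(x_k,\partial\Omega)$. Anchoring the geometric decay at $x_{k+1}$ instead of $x_k$ circumvents this issue entirely.
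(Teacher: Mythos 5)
Your proof is correct and follows essentially the same route as the paper: concatenate the $\gamma_k$, use $\ell(\gamma_j)\le c_1 d(x_j,\partial\Omega)$ with the geometric decay anchored at $x_{k+1}$, and sum the series (your treatment of the $N=\infty$ convergence is also fine and is only implicit in the paper). The single difference is the comparison step: you get $d(x_{k+1},\partial\Omega)\le (c_1+1)\,d(z,\partial\Omega)$ directly from $d(z,x_{k+1})\le\ell((\gamma_k)_{z,x_{k+1}})\le c_1\,d(z,\partial\Omega)$ and the $1$-Lipschitz property of $d(\cdot,\partial\Omega)$, whereas the paper reaches the slightly weaker bound $d(x_{k+1},\partial\Omega)\le 2c_1\,d(z,\partial\Omega)$ via a two-case contradiction argument, so your streamlining even yields a marginally smaller $c_2=c_1+c_1(c_1+1)/(1-b)$.
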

		
\begin{proof}
Let $\gamma$ be the concatenation of the curves $\gamma_k$, $k=0,\ldots, N-1$. Note that
the length of $\gamma_k$ satisfies the estimate $\ell(\gamma_k)\le c_1\, d(x_k,\partial\Omega)$ because of the $c_1$-John condition.
Moreover, for each $k,j\in\{0,\ldots, N-1\}$ with $k<j$, we have that $d(x_j,\partial\Omega)\le b^{j-k}\, d(x_k,\partial\Omega)$. It follows that if 
$z$ is a point in the trajectory of $\gamma$, 
we fix a subcurve $\gamma_{z,x_N}$ of $\gamma$ with terminal points $z$ and $x_N$, and then choose the largest
$k\in\{1,\ldots, N-1\}$ such that $\gamma_{z,x_N}\subset \bigcup_{j=k}^{N-1}\gamma_j$. 
If $k=N-1$, then $\gamma_{z,x_N}$ is a subcurve of the $c_1$-John curve $\gamma_{N-1}$, and so we have
directly the inequality $\ell(\gamma_{z,x_N})\le c_1\, d(z,\partial\Om)$.
Now, if $k\le N-2$, then
\begin{align*}
\ell(\gamma_{z, x_N})\le \sum_{j=k+1}^{N-1}\ell(\gamma_j)+\ell((\gamma_k)_{z,x_{k+1}})
&\le c_1\sum_{j=k+1}^{N-1}d(x_j,\partial\Omega)+\ell((\gamma_k)_{z,x_{k+1}})\\
&\le c_1\sum_{j=k+1}^{N-1}b^{j-k-1}d(x_{k+1},\partial\Omega)+\ell((\gamma_k)_{z,x_{k+1}})\\
&\le \frac{c_1}{1-b}\, d(x_{k+1},\partial\Omega)+c_1\, d(z,\partial\Omega).
\end{align*}
If $d(x_{k+1},\partial\Omega)\le 2c_1\, d(z,\partial\Omega)$, then we have
\[
	\ell(\gamma_{z, x_N})\le\left[\frac{2c_1^2}{1-b}+c_1\right]\, d(z,\partial\Omega).
\]
If $d(x_{k+1},\partial\Omega)>2c_1\, d(z,\partial\Omega)$, then we would have
\[
	d(z,\partial\Omega)\ge d(x_{k+1},\partial\Omega)-d(x_{k+1},z)>2c_1\, d(z,\partial\Omega)-\ell((\gamma_k)_{x_{k+1},z})
	\ge c_1\, d(z,\partial\Omega),
\]
which is not possible as $c_1\ge 1$. Note here that we have used the $c_1$-John condition above as well. It follows that
necessarily $d(x_{k+1},\partial\Omega)\le 2c_1\, d(z,\partial\Omega)$, and so we have the John condition
\[
	\ell(\gamma_{z, x_N})\le\left[\frac{2c_1^2}{1-b}+c_1\right]\, d(z,\partial\Omega).
\]
Now taking 
\[
	c_2:=\max\bigg\lbrace \frac{2c_1^2}{1-b}+c_1, c_1\bigg\rbrace=\frac{2c_1^2}{1-b}+c_1
\]
yields the claim. 
\end{proof}

\subsection{Weak maximal function bounds}
		
Let $g\in L^1_{\rm loc}(X)$. For $s>0$ and $x\in X$, we define the 
restricted maximal function as
\[
M_s g(x):=\sup\left\{ \vint_{B(y,r)} |g| d\mu : x \in B(y,r), r \in (0,s)\right\}.
\]
		
We will need the following statement for this maximal function.
		
\begin{lemma} \label{lem:localweak} 
Suppose that $(X,d,\mu)$ is $D$-doubling. There exists a constant $C_M>0$ so that the following holds. 
If $g\in L^1_{\rm loc}(X)$ is non-negative, $x\in X$, and $r,s>0$, then
there exists $y\in B(x,r)$ so that
\[
	M_s g(y) \leq C_M\, \vint_{B(x,r+2s)} g d\mu.
\]
\end{lemma}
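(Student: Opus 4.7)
My plan is a contradiction argument based on the $5r$-covering theorem, closely modeled on the classical weak $(1,1)$-proof for the Hardy--Littlewood maximal function.

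Assume, for contradiction, that $M_s g(y) > \lambda$ for every $y \in B(x, r)$, where $\lambda := C_M \vint_{B(x, r+2s)} g \, d\mu$ and $C_M$ is a constant to be chosen at the end. For each such $y$, the definition of $M_s$ provides a ball $B_y = B(z_y, \rho_y)$ with $y \in B_y$, $\rho_y < s$, and $\vint_{B_y} g \, d\mu > \lambda$. Since $\rho_y < s$ and $d(y, x) < r$, the triangle inequality yields $B_y \subset B(x, r+2s)$; this containment is the key geometric observation, and is the reason for restricting the supremum defining $M_s$ to radii strictly less than $s$.

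Next I would apply the basic $5r$-covering theorem to the family $\{B_y\}_{y \in B(x, r)}$, whose radii are uniformly bounded by $s$. This yields a pairwise disjoint countable subfamily $\{B_{y_i}\}$ with $\bigcup_i 5 B_{y_i} \supset B(x, r)$. Combining doubling (to absorb the $5$-fold inflation), disjointness, the containment $B_{y_i} \subset B(x, r+2s)$, and the density estimate $\lambda\, \mu(B_{y_i}) < \int_{B_{y_i}} g \, d\mu$, I obtain
\[
\mu(B(x, r)) \leq \sum_i \mu(5 B_{y_i}) \leq D^3 \sum_i \mu(B_{y_i}) < \frac{D^3}{\lambda} \int_{B(x, r+2s)} g \, d\mu.
\]

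Substituting the choice of $\lambda$ and rearranging gives $C_M < D^3\, \mu(B(x, r+2s))/\mu(B(x, r))$, so fixing $C_M$ strictly greater than the right-hand side produces the desired contradiction. The only delicate point is choosing $C_M$ to absorb the doubling ratio between the concentric balls $B(x, r+2s)$ and $B(x, r)$ into a constant controlled by the doubling data of $\mu$; beyond that, the argument is a standard Vitali / weak-type calculation, and the main reason it works at all is the uniform radius bound furnished by the restriction $\rho_y < s$, which makes the $5r$-covering theorem applicable.
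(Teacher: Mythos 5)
Your argument is correct, but it takes a genuinely different route from the paper. The paper applies the weak $(1,1)$ maximal function inequality (cited from the literature) to the truncation $\tilde g = g1_{B(x,r+2s)}$, chooses a level $T$ so that the superlevel set $\{M_s\tilde g>T\}$ has measure strictly less than $\mu(B(x,r))$, picks a point $y\in B(x,r)$ outside it, and then observes that $M_s\tilde g(y)=M_sg(y)$ because every ball of radius less than $s$ containing a point of $B(x,r)$ lies in $B(x,r+2s)$. You instead inline the covering mechanism behind that weak-type estimate: assuming every $y\in B(x,r)$ is bad, you select density balls $B_y\subset B(x,r+2s)$, run the $5r$-covering theorem, and contradict the total mass of $g$ on $B(x,r+2s)$. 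Both proofs pivot on the same containment $B_y\subset B(x,r+2s)$; yours is more self-contained (no external citation), while the paper's is shorter because the Vitali argument is hidden inside the cited theorem.

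One caveat, which applies equally to the paper's own proof: the constant is not controlled by the doubling constant alone. Your contradiction requires $C_M> D^3\,\mu(B(x,r+2s))/\mu(B(x,r))$, and by doubling this ratio is bounded only in terms of $D$ \emph{and} $s/r$ (e.g.\ by $D^{2+\lceil \log_2(s/r)\rceil_+}$); for $s\gg r$ it is genuinely unbounded (already for $g=1_{B(x,r)}$ on $\R$), so the phrase ``controlled by the doubling data of $\mu$'' overstates matters unless $s/r$ is fixed. The paper's own constant $C_M=2C\,D^{2+\lceil \log_2(s/r)\rceil_+}$ carries the same dependence, and in the application (Lemma~\ref{lem:ballpointsold}) one has $s=8r_i$, so this is harmless; you should simply record that $C_M$ depends on $D$ and on $s/r$. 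Finally, before dividing by $\vint_{B(x,r+2s)} g\, d\mu$ you should dispose of the trivial case where this average vanishes (then $M_sg\equiv 0$ on $B(x,r)$, since all competitor balls lie in $B(x,r+2s)$), and note it is finite since $g\in L^1_{\rm loc}$ and closed balls are compact here --- the paper handles this with its ``without loss of generality'' sentence.
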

		
\begin{proof}
	Without loss of generality, we may assume that $\vint_{B(x,r+2s)} g d\mu$ is finite,
and we set $\Lambda:=\vint_{B(x,r+2s)} g d\mu$.
Let $\tilde g = g1_{B(x, r+2s)}$. Let $E_T = \{y \in X : M_s \tilde{g} > T\}$.  By the weak maximal function inequality \cite[Theorem~3.5.6]{HKST}, there exists a constant $C$, depending only on the doubling constant $D$ 
(strictly speaking, on $D_\mu$), such that
\[
\mu(E_T) \leq \frac{C}{T}\,\int_X \tilde g d\mu 
= \frac{C}{T}\,\int_{B(x,r+2s)} g d\mu = \frac{C}{T}\, \Lambda \mu(B(x,r+2s)).
\]
If $T=2C \Lambda\, D^{2+\lceil \log_2(s/r) \rceil_+}$, then we get from measure doubling that 
$\tfrac{C}{T}\,\Lambda \mu(B(x,r+2s)) < \mu(B(x,r))$. So,
\[
 \mu(E_T)< \mu(B(x,r)),
\]
and there exists a $y\in B(x,r)\setminus E_T$. For such $y$, we have $M_s \tilde{g}(y) 
\leq 2C \Lambda\,D^{2+\lceil \log_2(s/r) \rceil_+}$. We have $M_s \tilde{g}(y) =M_s g(y)$ since if $y\in B(z,\rho)$ 
for some $z\in X$ and $0<\rho<s$, 
then $B(z,\rho)\subset B(x,r+2s)$. The claim thus follows with $C_M=2C\, D^{2+\lceil \log_2(s/r) \rceil_+}$.
\end{proof}

\subsection{Pointwise Poincar\'e inequality}
		
\begin{definition}\label{def:PointwisePI}
We say that a space $X$ satisfies the pointwise $p$-Poincar\'e inequality with constants $C_P,L,\lambda\geq 1$, if for 
every $x,y\in X$ and every non-negative Borel function $g\in L^p(X)$, we have
\[
\inf_{\gamma\in \Gamma^{L,\lambda}_{x,y}} \int_\gamma g ds \leq C_P d(x,y)\left(M_{\lambda d(x,y)} g^p(x)+M_{\lambda d(x,y)} g^p(y)\right)^{\frac{1}{p}},
\]
where the infimum is taken over the collection $\Gamma^{L,\lambda}_{x,y}$ which consists of all rectifiable curves $\gamma$ connecting $x$ to $y$ with length at most $Ld(x,y)$ which are contained in $B(x,2\lambda d(x,y))$.
\end{definition}
		
The notion of pointwise Poincar\'e inequality is perhaps not a standard notion in current literature on analysis in nonsmooth spaces,
but the following proposition shows that it is equivalent to a notion that \emph{is} standard.
The proof of the proposition, as stated, can be found in~\cite[Theorem 1.5]{ebkeithzhong}, and the proof there uses
the Haj\l asz-type inequality~\cite[Theorem 3.2, Theorem~3.3]{hajkos}.
On geodesic spaces the $\lambda$ that appears in the Poincar\'e inequality \eqref{eq:piineq} can be chosen to be unity, 
$\lambda=1$, see \cite[Theorem 1]{hajkos95}. This observation together with \cite[Theorem 3.2, Theorem~3.3]{hajkos} yields the 
second paragraph of the proposition.
		
\begin{proposition}\label{prop:pointwise} 
Suppose that $(X,d,\mu)$ is doubling. The space $X$ satisfies a $p$-Poincar\'e inequality if and only if $(X,d,\mu)$ satisfies a pointwise $p$-Poincar\'e inequality.
			
Furthermore, if $X$ is a complete geodesic space that supports a $p$-Poincar\'e 
inequality, then the pointwise $p$-Poincar\'e inequality holds with the choice of $\lambda=2$,
at the expense of increasing the constant $C_P$. 
\end{proposition}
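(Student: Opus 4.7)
The plan is to use the well-known chain-of-balls technique of Haj\l asz--Koskela, which provides a bridge between integral Poincar\'e inequalities on balls and pointwise inequalities controlled by maximal functions.

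For the direction that the pointwise $p$-Poincar\'e inequality implies the standard one, I would fix a ball $B=B(x_0,r)$, a locally integrable $f$, and an upper gradient $g$. Applying the pointwise inequality to pairs $(x,y) \in B\times B$, integrating against $\tfrac{1}{\mu(B)^2}\, d\mu(x)\, d\mu(y)$, and using $d(x,y)\le 2r$ together with $M_{\lambda d(x,y)}g^p \leq M_{2\lambda r}g^p$, one reduces matters to an $L^1$ bound on the restricted maximal function of $g^p$ over $B(x_0,(1+2\lambda)r)$. For $p>1$ this is immediate from the strong-type maximal inequality; for $p=1$ one instead uses the truncation trick of Haj\l asz--Koskela to upgrade the weak $(1,1)$ estimate into the required Poincar\'e inequality, possibly enlarging the dilation constant $\lambda$.

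For the harder direction, assume $X$ supports a $p$-Poincar\'e inequality. Since doubling spaces supporting a Poincar\'e inequality are quasiconvex, for $x,y\in X$ I can fix a quasi-geodesic $\eta$ from $x$ to $y$ of length $\le L_0 d(x,y)$. Along $\eta$ I would set up a bi-infinite telescoping chain of balls $B_i^x$ and $B_i^y$ with centers on $\eta$ and radii $r_i \simeq 2^{-|i|} d(x,y)$, arranged so that consecutive balls are comparable and every ball lies in $B(x, 2\lambda d(x,y))$. The standard telescoping of averages yields
\[
|f(x)-f(y)| \lesssim \sum_{i} r_i \left(\vint_{\lambda B_i^x} g^p\, d\mu\right)^{1/p} + \sum_{i} r_i \left(\vint_{\lambda B_i^y} g^p\, d\mu\right)^{1/p},
\]
and each sum is bounded by $d(x,y)\, (M_{\lambda d(x,y)} g^p(\cdot))^{1/p}$ at the appropriate endpoint via the doubling and summability in $i$. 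Minimising over such chains produces an admissible curve of length $\lesssim d(x,y)$ that lies in the requisite ball, giving the pointwise inequality. The main obstacle is maintaining quantitative control over the location and length of the curve so that it satisfies the containment and length bounds defining $\Gamma^{L,\lambda}_{x,y}$; this is handled by picking the chain centres sufficiently densely on the quasi-geodesic and invoking compactness / Arzel\`a--Ascoli to extract a limit curve realising the pointwise bound.

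For the furthermore statement, I would exploit that on complete geodesic doubling spaces the Poincar\'e inequality holds with $\lambda=1$ by the self-improvement result of Haj\l asz--Koskela. The chain of balls can then be placed along an actual geodesic from $x$ to $y$, whose trajectory automatically lies in $\overline{B}(x, d(x,y))\subset B(x, 2\cdot 2\, d(x,y))$, so every dilated ball $\lambda B_i^x = 2 B_i^x$ still lies in $B(x, 2\lambda d(x,y))$ with $\lambda=2$. This costs only a universal factor in $C_P$, which is absorbed into the constant, completing the proof.
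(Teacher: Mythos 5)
There is a genuine gap in the harder direction: you are proving the wrong inequality. Definition~\ref{def:PointwisePI} is not a Haj\l asz-type estimate for a function--upper-gradient pair; it is a statement about an \emph{arbitrary} non-negative Borel function $g\in L^p(X)$, with no $f$ present, asserting the existence of a single rectifiable curve $\gamma$ from $x$ to $y$ of length at most $Ld(x,y)$, contained in $B(x,2\lambda d(x,y))$, for which the line integral $\int_\gamma g\,ds$ is bounded by $C_P d(x,y)\bigl(M_{\lambda d(x,y)}g^p(x)+M_{\lambda d(x,y)}g^p(y)\bigr)^{1/p}$. Your telescoping chain of balls along a quasi-geodesic only yields the estimate $|f(x)-f(y)|\lesssim d(x,y)\bigl(M g^p(x)+M g^p(y)\bigr)^{1/p}$ for $f$ with upper gradient $g$, and no amount of Arzel\`a--Ascoli applied to the chain centres produces the required curve: the chain controls averages of $f$, not curve integrals of $g$. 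The curve form is precisely what is used later (Lemma~\ref{lem:ballpointsold} needs curves whose $g$-integral is small so that they avoid the balls in $\cB$), so it cannot be replaced by the Haj\l asz inequality. To close the gap one standardly applies the Haj\l asz-type estimate to the auxiliary function $u(z)=\inf\{\int_\gamma (g+\varepsilon/d(x,y))\,ds\}$, the infimum over curves from $x$ to $z$, for which $g+\varepsilon/d(x,y)$ is an upper gradient; the additive penalty converts the bound on $u(y)$ into the length bound $\ell(\gamma)\le Ld(x,y)$ for a near-minimizer, and the locality of the chains (with restricted maximal functions) gives the containment in $B(x,2\lambda d(x,y))$. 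This is the content of the result the paper cites for this proposition, and it is the essential step your write-up omits.

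The converse direction (pointwise implies integral Poincar\'e) is fine in outline once you add the observation that $|f(x)-f(y)|\le\int_\gamma g\,ds$ along a near-infimizing curve, and your treatment of $p=1$ via truncation and of the geodesic case via the Haj\l asz--Koskela self-improvement to $\lambda=1$ matches the route the paper takes (the paper simply cites these facts rather than proving them).
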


\section{Preparation for the proof of Theorem~\ref{thm:mainthm}}\label{sec:preparation}
		
\subsection{Reduction to geodesic spaces}\label{subsec:reduction}
		
First, we do a simple reduction of the problem to the geodesic case. A space $X$ is said to be geodesic if for every $x,y\in X$ there exists a rectifiable curve $\gamma:[0,1]\to X$ with $\gamma(0)=x,\gamma(1)=y$ and $\len(\gamma)=d(x,y)$. Every 
complete metric space equipped with a doubling measure that supports a $p$-Poincar\'e inequality
is bi-Lipschitz to a geodesic space, thanks to the fact that they are necessarily quasiconvex, see
for instance~\cite[Theorem~8.3.2]{HKST}. For such spaces, we can remove one parameter from the statement of 
Theorem~\ref{thm:mainthm} and it will simplify the proofs to do so. We now re-state Theorem~\ref{thm:mainthm} in this context.
		
\begin{theorem}\label{thm:mainthm-geodesic}
Let $(X,d,\mu)$ be a complete geodesic space with $\mu$ a doubling measure supporting a
$p$-Poincar\'e inequality. Let $\xi$ be a 
doubling gauge function which has codimension greater than $p$. There exist constants $c \geq 1$ and a constant $\Delta>0$ so that any open subset $\Omega \subset X$ and $\delta\in(0,1)$ satisfy the following property.
			
If for all $x \in \Omega$ it holds that 
\begin{equation}\label{eq:Haus-scl-ass}
	\mathcal{H}^\xi_{d(x,\partial \Omega)}(B(x,2 d(x,\partial \Omega))\setminus \Omega)\geq \delta\, \xi(B(x,d(x,\partial \Omega))),
\end{equation}
then for all $x \in \Omega$ we have
\begin{equation}\label{eq:Haus-scl-concl}
	\mathcal{H}^\xi_{d(x,\partial \Omega)}(\partial_x \Omega(c) \cap B(x,16 d(x,\partial \Omega)))\geq \Delta\, \delta\, \xi(B(x,d(x,\partial \Omega))).
\end{equation}
\end{theorem}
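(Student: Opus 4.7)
The plan is, for a fixed $x_0\in\Omega$ with $r_0:=d(x_0,\partial\Omega)$, to build a large subset $F$ of $B(x_0,2r_0)\setminus\Omega$ with $\mathcal{H}^\xi_{r_0}(F)\gtrsim\delta\,\xi(B(x_0,r_0))$, every point of which is the endpoint of a $c$-John curve starting at $x_0$. The construction is a scale-by-scale descent: given $x_k$ with $r_k:=d(x_k,\partial\Omega)$, one selects $x_{k+1}$ with $d(x_{k+1},\partial\Omega)\le b\,r_k$ for a fixed $b\in(0,1)$ such that (i) $x_k$ and $x_{k+1}$ are joined by a short rectifiable curve and (ii) the hypothesis~\eqref{eq:Haus-scl-ass} can be re-applied at $x_{k+1}$ to a large subset of the boundary concentrated near the target point of $F$. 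Lemma~\ref{lem:repeated curve} then glues the local steps into a $c$-John curve from $x_0$ to the limit $w=\lim_k x_k\in\partial_{x_0}\Omega(c)$, while summing the step lengths as a geometric series in $b$ keeps the whole trajectory inside $B(x_0,16r_0)$.

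The single iteration step is the avoidance lemma alluded to in the introduction as Lemma~\ref{lem:ballpointsold}/Lemma~\ref{lem:iterationstep}. The argument applies the pointwise $p$-Poincar\'e inequality of Proposition~\ref{prop:pointwise} to a carefully chosen upper gradient $g$ which is large on the current obstruction set one wishes to avoid (for instance, a smoothed indicator of the already-identified failure regions inside $B(x_k,\lambda r_k)$, rescaled by $r_k$). Together with Lemma~\ref{lem:localweak}, applied to $g^p$ on an appropriate ball around $x_k$, this yields a concrete point $x_{k+1}$ near the boundary from which a short rectifiable curve to $x_k$ exists \emph{and} which is still surrounded by a definite $\xi$-fraction of the starting boundary content. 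Iterating produces the desired sequence.

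The sharpness of the $\xi$-content estimate comes from Lemma~\ref{lem:conentconv}: the Poincar\'e-based exceptional set is naturally controlled in $p$-codimensional Hausdorff content, and the second inequality of~\eqref{eq:xisum} converts that bound into $\xi$-content at the cost of a factor $(\rad(B_0)/\rad(\cB))^{Q-p}$, where $Q>p$ is the codimension of $\xi$. Because the iteration at step $k$ operates at scales $r_k\lesssim b^k r_0$, the relevant covering balls shrink geometrically and this factor produces a summable geometric series. Hence the cumulative failure content across all scales is bounded by a small constant times $\delta\,\xi(B(x_0,r_0))$, and the complementary set $F$ of successfully reached boundary points retains comparable $\xi$-content. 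A Vitali-type covering argument, applied to an initial cover of $B(x_0,2r_0)\setminus\Omega$ realizing the hypothesis, transfers the content estimate from $F$ to $\partial_{x_0}\Omega(c)\cap B(x_0,16r_0)$.

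The principal obstacle will be engineering the iteration step so the three requirements above hold simultaneously with explicit, uniform constants. In particular, the upper gradient $g$ must be chosen so that the pointwise Poincar\'e inequality forces both the short-curve existence and the avoidance of previously identified failure sets, and the set of candidate successors $x_{k+1}$ must lie near the concentration region of the remaining boundary content. This is where the strict inequality $Q>p$ becomes indispensable: it is precisely the gap $Q-p>0$ that produces the geometric decay necessary to keep the cumulative failure content summable, and hence gives the sharp exponent that was out of reach in~\cite{previouspaper} and~\cite{GibKort22}.
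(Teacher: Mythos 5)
Your outline contains several of the right ingredients (gluing John curves via Lemma~\ref{lem:repeated curve}, the maximal-function/pointwise Poincar\'e mechanism, the codimension gap $Q>p$), but the core of the argument is missing: you never provide a mechanism that yields a \emph{lower bound for the $\xi$-content} of the set of reached points. A Hausdorff-content lower bound must defeat \emph{every} admissible cover, whereas your plan constructs John curves to individual points and then asserts that the ``cumulative failure content across all scales'' is controlled by a geometric series coming from the factor $(\rad(B_0)/\rad(\cB))^{Q-p}$ in Lemma~\ref{lem:conentconv}. That factor compares the $h_p$- and $\xi$-contents of one fixed collection relative to one fixed ball inside a single scale step; it does not, by itself, convert per-scale failures into a series summable relative to $\xi(B(x_0,r_0))$, and you give no argument that the failure set at step $k$ admits a cover of total $\xi$-content comparable to a geometrically decaying fraction of $\delta\,\xi(B(x_0,r_0))$. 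This is precisely where the earlier arguments in \cite{azzam,KNN,GibKort22,previouspaper} lose the exponent: the per-scale loss compounds multiplicatively along the descent, and naive bookkeeping only yields a strictly smaller content exponent. As written, the proposal therefore reproduces the non-sharp scheme rather than overcoming the stated obstruction.

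The paper's proof is organized differently, and the difference is exactly what buys sharpness. One argues by contradiction against an \emph{arbitrary} countable cover $\cC$ of $\partial_{x_0}\Omega(c_2)\cap B(x_0,16r_0)$ with $\xi(\cC)<\epsilon\,\xi(B(x_0,r_0))$ and $\rad(\cC)\le r_0$, which makes $B(x_0,r_0)$ an $\epsilon$-bad ball with respect to $\cC$. Lemma~\ref{lem:iterationstep} then gives a dichotomy: either $\cC$ misses a point of the accessible boundary (immediate contradiction), or there is a ball $B_1=B(x_1,r_1)$ with $r_1\le r_0/641$, John-accessible from $x_0$, which is $\epsilon$-bad \emph{with the same $\epsilon$ and the same cover}; no constant degrades along the iteration. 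The codimension gap enters at this point, through the removal of the high-density set $H_\eta$ and through the factor $(10S-1)^{-(Q-p)}$, which allows the expansion parameter $S$ to be taken so large that the pigeonholed ball is again $\epsilon$-bad. Moreover, the avoidance Lemma~\ref{lem:ballpointsold} is used in contrapositive form: since every curve satisfying the four curve conditions from $x_0$ to an unreachable point is blocked by the family $160\cB_x'\cup 18\cC_b$, the sparseness condition \eqref{eq:Bojar} must fail, and this failure is what launches the pigeonholing. That is quite different from your proposed use of the Poincar\'e inequality to select a successor point ``surrounded by a definite $\xi$-fraction of the starting boundary content,'' for which no precise formulation is offered. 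Finally, iterating the dichotomy and applying Lemma~\ref{lem:repeated curve} produces a limit point lying in $\partial_{x_0}\Omega(c_2)\cap B(x_0,16r_0)$ that cannot lie in $\bigcup\cC$, because $\rad(\cC|_{B(x_i,16r_i)})\le r_i\to 0$; this is the contradiction. Unless you can supply an actual estimate showing that your per-scale failure sets admit covers whose total $\xi$-content is a small fraction of $\delta\,\xi(B(x_0,r_0))$, the proposal does not prove the theorem.
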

		
	Theorem~\ref{thm:mainthm} follows from this version. 
First note that even if $X$ is not a geodesic space, by virtue of the doubling property of $\mu$
and the support of a Poincar\'e inequality, $X$ is necessarily quasiconvex. Therefore, the length metric on $X$ is
biLipschitz equivalent to the original metric, and the length of curves in $X$ with respect to this new metric agrees
with the length in the original metric. The balls with respect  to the new metric are not necessarily balls in the original
metric, but they are contained in balls of comparable radii in the original metric, and contain balls in the original metric,
also of comparable radii. Thus modifying the metric results in a change of the relevant constants 
in Theorem~\ref{thm:mainthm}. Thus we can reduce the proof of Theorem~\ref{thm:mainthm} to the case that
$X$ is a geodesic space.
		
We now justify our claim that Theorem~\ref{thm:mainthm} follows from Theorem~\ref{thm:mainthm-geodesic} when
$X$ is a geodesic space. Suppose now that the hypothesis of Theorem~\ref{thm:mainthm} holds for some 
$\delta\in (0,1)$ and $\ch$ and for each $x\in\Om$. Let $x_0\in\Om$. Then by the geodesicity and completeness
of $X$ there is a nearest point to $x_0$ on $\partial\Om$ and a geodesic connecting that point to $x_0$, and except
for that point, the remaining part of the geodesic lies in $\Om$. Now we can find a point $x$ on this geodesic
that is a distance $d(x_0,\partial\Om)[1-1/\ch]$ from $x_0$. Note that then $d(x,\partial\Om)=d(x_0,\partial\Om)/\ch$.
Also, $B(x,\ch\, d(x,\partial\Om))\subset B(x_0,2d(x_0,\partial\Om))$. Hence, by the hypothesis, 
\[
\mathcal{H}^\xi_{d(x,\partial\Om)}( B(x_0,2d(x_0,\partial\Om)\setminus \Omega)
\ge \mathcal{H}^\xi_{d(x,\partial\Om)}( B(x,\ch\, d(x,\partial\Om))\setminus \Omega)\ge \delta\, \xi(B(x,d(x,\partial\Om))).
\]
Now, by the doubling property of $\xi$ and by Lemma~\ref{lem:si-cont-scale-change}, we have that
\[
\mathcal{H}^\xi_{d(x_0,\partial\Om)}( B(x_0,2d(x_0,\partial\Om)\setminus \Omega)
	\ge C(\ch)^{-1}\, \delta\, \xi(B(x,d(x,\partial\Om)))\ge C(\ch)^{-1}\, D^{-n_0}\, \, \delta\, \xi(B(x_0,d(x_0,\partial\Om))),
\]
where $n_0\simeq \log_2(\ch)$,
from whence the desired conclusion follows from an application of Theorem~\ref{thm:mainthm-geodesic}.

\subsection{An avoidance property}
		
Recall that $h_q(B)=\frac{\mu(B)}{\rad(B)^q}$ for $q>0$.  Given the reduction to the geodesic case, we will henceforth only state results for such spaces.
Recall the notational conventions from Subsection~\ref{subsec:hcont}. In particular, given a collection $\mathcal{F}$ of
subsets of $X$, we set $\bigcup\mathcal{F}:=\bigcup_{K\in\mathcal{F}}K$.
		
The following lemma coins a crucial avoidance property, which underlies our approach. It states that a collection $\cB$ satisfying 
a certain ``sparseness'' condition, \eqref{eq:Bojar}, can be avoided by a rectifiable curve. 
Conversely, if the collection of balls cannot be avoided, then \eqref{eq:Bojar} must fail. In fact, our application of Lemma~\ref{lem:ballpointsold} will use this contrapositive form.

\begin{lemma}\label{lem:ballpointsold} 
Under the hypotheses of Theorem~\ref{thm:mainthm-geodesic}, we denote the constants associated with the pointwise $p$-Poincar\'e
inequality by $(C_P,L,2)$.
There exist constants $C_1,\epsilon_0>0$ so that the following holds. Suppose that $x,y\in X$ and $\cB$ is a
countable collection of balls so that $\bigcup 4 \cB \cap \{x,y\} =\emptyset$, and so that for all $r\in (0,4d(x,y)]$ and 
$z\in \{x,y\}$ we have
\begin{equation}\label{eq:Bojar}
	h_p(\cB|_{B(z,r)}) \leq \epsilon_0 h_p(B(z,r)).
\end{equation}
Then, there exists a rectifiable curve $\gamma:[0,1]\to X$ connecting $x$ to $y$ which does not intersect 
$\bigcup\cB$ and with 
\begin{align*}
	\gamma(0)=x,\qquad & \gamma(1)=y,\qquad \\
	\gamma \subset B(x,2d(x,y)),  \qquad & \len(\gamma)\leq  C_1\,d(x,y), \\ 
		C_1\,d(\gamma(t),x)\geq  \len(\gamma|_{[0,t]}),\qquad & C_1 d(\gamma(t),y)\geq  \len(\gamma|_{[t,1]})
\end{align*}
for all $t\in [0,1]$.
\end{lemma}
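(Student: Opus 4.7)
The plan is to combine the pointwise $p$-Poincar\'e inequality (Proposition~\ref{prop:pointwise}) with the weak maximal inequality (Lemma~\ref{lem:localweak}) and an iterative chaining argument. The strategy is to first show that if a suitably chosen weight $g$ has small local $L^p$ average, then the pointwise PI produces a curve whose $g$-line-integral is so small that the curve is forced to avoid $\bigcup\cB$; then we use the weak maximal inequality together with a dyadic iteration to repair the endpoints and guarantee the two-sided John bound.

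First, pass to a pairwise disjoint sub-collection of $\cB$ by the Vitali covering lemma (so that $\bigcup\cB\subset\bigcup 5\widetilde\cB$), absorbing the enlargement into $\epsilon_0$. Then introduce the test weight
\[
g \;=\; \tfrac{1}{3}\sum_{B\in\cB}\tfrac{1}{\rad(B)}\mathbf{1}_{4B}.
\]
If a rectifiable curve has endpoints outside $\bigcup 4\cB$ and meets some $B\in\cB$, then it must cross the shell $4B\setminus B$, contributing length $\ge 3\rad(B)$ inside $4B$, so $\int_\gamma g\,ds\ge 1$. Hence \emph{avoiding $\bigcup\cB$ is equivalent to} $\int_\gamma g\,ds<1$. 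Using disjointness together with the hypothesis \eqref{eq:Bojar} (and the observation that $z\notin 4B$ forces every $B\in\cB|_{B(z,r)}$ to have $\rad(B)<r/3$), I would establish the local bound
\[
\int_{B(z,r)} g^p\,d\mu \;\lesssim\; \epsilon_0\,\frac{\mu(B(z,r))}{r^p}
\qquad\text{for } z\in\{x,y\},\ r\le 4d(x,y),
\]
splitting the contributing balls into those with $\rad(B)\le r$ (handled by \eqref{eq:Bojar} at a suitable scale) and those with $\rad(B)>r$ (only a metric-doubling-bounded number of which can contribute).

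Because the above estimate degenerates as $r\to 0$, $Mg^p(x)$ and $Mg^p(y)$ may be infinite and the pointwise PI cannot be applied directly at $x,y$. Instead, apply Lemma~\ref{lem:localweak} to $g^p$ at scale $\approx d(x,y)$ to produce points $x^*\in B(x,r_0)$ and $y^*\in B(y,r_0)$, with $r_0\approx d(x,y)/\mathrm{const}$, satisfying
\[
M_{2d(x^*,y^*)}g^p(x^*) + M_{2d(x^*,y^*)}g^p(y^*) \;\lesssim\; \frac{\epsilon_0}{d(x,y)^p}.
\]
The pointwise $p$-Poincar\'e inequality of Definition~\ref{def:PointwisePI} (with $\lambda=2$) then furnishes a curve $\gamma^*\in\Gamma^{L,2}_{x^*,y^*}$ with $\int_{\gamma^*}g\,ds\lesssim \epsilon_0^{1/p}<1$ for $\epsilon_0$ small, so $\gamma^*$ avoids $\bigcup\cB$.

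To fix the endpoints and obtain the two-sided John bound, iterate the same construction at geometrically refining scales: set $r_k=2^{-k}d(x,y)$ and, at stage $k$, use Lemma~\ref{lem:localweak} to produce $x_k\in B(x,r_k)$ with controlled maximal function at scale $r_k$, then connect $x_k$ to $x_{k+1}$ by the PI-avoidance argument rescaled to this level (the hypothesis \eqref{eq:Bojar} at $x_k$ is inherited from \eqref{eq:Bojar} at $x$ via doubling, as long as the comparison radius is $\gtrsim r_k$). Concatenating these short segments by Lemma~\ref{lem:repeated curve}, applied to the ``domain'' $X\setminus\{x\}$ so that $d(z,\partial\Omega)=d(z,x)$, yields a $c_2$-John curve from $x^*$ to $x$; the symmetric construction gives one from $y^*$ to $y$; glue these to $\gamma^*$ to produce $\gamma$. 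The containment $\gamma\subset B(x,2d(x,y))$ then follows from the overall length bound by choosing the iteration constant small enough. The main obstacle is the local $L^p$-control of $g$, particularly taming the irregular behavior near $x$ and $y$, together with the delicate bookkeeping of constants through the iteration so that $C_1$ and $\epsilon_0$ remain absolute.
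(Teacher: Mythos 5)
Your overall strategy (a weight $g$ supported on inflated balls, a local $L^p$ bound deduced from \eqref{eq:Bojar}, selection of good points via Lemma~\ref{lem:localweak}, and Poincar\'e curves with small $g$-line integral) is essentially the paper's, but two steps of your architecture fail as written. First, your avoidance criterion ``$\int_\gamma g\,ds<1$ forces the curve to miss $\bigcup\cB$'' is only valid for curves whose \emph{endpoints} lie outside the inflated balls, and the intermediate points produced by Lemma~\ref{lem:localweak} (your $x^*,y^*$ and the $x_k$) are not known to have this property: a point with small restricted maximal function could a priori sit inside $4B\setminus B$ for some $B\in\cB$, and then a short curve starting there can enter $B$ while picking up $g$-integral less than $1$. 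The paper devotes a separate argument to exactly this point: using the maximal bound at the selected points together with $x,y\notin\bigcup 4\cB$, it excludes small $\rad(B)$ directly from the maximal bound and large $\rad(B)$ by a density estimate, and only then can the Poincar\'e curves be declared to avoid $\bigcup\cB$. Your proposal omits this step entirely.

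Second, your chain architecture does not deliver the two-sided conditions $C_1\,d(\gamma(t),x)\ge\len(\gamma|_{[0,t]})$ and $C_1\,d(\gamma(t),y)\ge\len(\gamma|_{[t,1]})$, nor the containment $\gamma\subset B(x,2d(x,y))$. The single middle curve $\gamma^*$ has length comparable to $d(x,y)$ and is only known to lie in $B(x^*,4d(x^*,y^*))$; nothing prevents it from passing arbitrarily close to $x$, at which point $\len(\gamma|_{[0,t]})\gtrsim d(x,x^*)$ while $d(\gamma(t),x)$ is arbitrarily small, so no constant $C_1$ works. The same defect defeats the dyadic repair near the endpoints: with $r_k=2^{-k}d(x,y)$, the Poincar\'e curve joining $x_k$ to $x_{k+1}$ is only confined to a ball of radius $\simeq 6r_k$ about $x_k$, which contains $x$, so it need not be a $c_1$-John curve in $X\setminus\{x\}$, and Lemma~\ref{lem:repeated curve} cannot be invoked as you propose. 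This is precisely why the paper places its chain points along a geodesic with ratio $a=1-\tfrac{1}{18L}$ \emph{depending on $L$}: each connecting curve then has length a small fraction of its distance to $x$ and $y$, so it cannot dip toward the endpoints, and the two-sided bounds (with $C_1=282L^2$) and the containment in $B(x,2d(x,y))$ are verified by direct computation rather than through Lemma~\ref{lem:repeated curve}. Repairing your argument would require replacing the dyadic spacing by such an $L$-dependent spacing and controlling the whole curve, not just the ends, by the same fine chaining—i.e., essentially reproducing the paper's construction.
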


\begin{proof} 
	Let $\epsilon_0>0$, and $C_1\geq 1$ be two fixed constants, to be chosen later. 
We will determine the constraints on their values in the course of the proof.  
Let $g=\sup_{B\in \cB} \frac{1}{\rad(B)} 1_{2B}.$
For each $r\in (0,2d(x,y))$ and for $z\in\{x,y\}$ we set 
$g_z:=\sup_{B\in \cB\, :\, B(z,r)\cap 2B\ne \emptyset} \frac{1}{\rad(B)} 1_{2B}$.
			
Noting that $g\vert_{B(z,r)}=g_z$, from~\eqref{eq:Bojar} and from the doubling proprty of $\mu$
we obtain
\begin{align*}
	\vint_{B(z,r)} g^p d\mu=\vint_{B(z,r)} g_z^p d\mu 
	&\le \frac{(2r)^p}{\mu(B(z,2r))}\, \frac{D}{2^p r^p}\, \sum_{B\in \cB\, :\, B(z,r)\cap 2B\ne \emptyset} \frac{\mu(2B\cap B(z,r))}{\rad(B)^p}\\
	&\le \frac{1}{h_p(B(z,2r))}\, \frac{D^{2}}{2^p r^p}\, \sum_{B\in \cB\, :\, B(z,r)\cap 2B\ne \emptyset} \frac{\mu(B)}{\rad(B)^p}\\
	&\le \frac{D^{2}\epsilon_0}{2^pr^p}.
\end{align*}
In applying ~\eqref{eq:Bojar} in the last line, we used the following fact: the non-emptiness of $2B\cap B(z,r)$ 
implies the non-emptiness of $B\cap B(z,2r)$ because $z\not\in 4B$.
			
	Let $\gamma_0:[0,d(x,y)]\to X$ be a geodesic connecting $x$ to $y$, parametrized by unit speed. Choose 
$a:=1-\frac{1}{18L}$. Let $x_0$ be the midpoint of this geodesic and let 
$x_i = \gamma_0(d(x,y)-a^i d(x,y)/2)$ for $i\geq 1$, and $x_i = \gamma_0(a^{-i} d(x,y)/2)$  
for $i\leq -1$. Let $r_i =(1-a)\, a^{|i|} d(x,y)/2$.
Note that with these choices, $r_i=d(x_i,z)(1-a)$ where $z=y$ if $i\ge 0$ and $z=x$ if $i<0$, and so
we have that
\[
\vint_{B(z,2d(z,x_i))}g^p\, d\mu\lesssim \frac{\epsilon_0}{[2d(z,x_i)]^p}. 
\]
As $B(x_i,18r_i)\subset B(z,2d(z,x_i))$,  
from our choice of $a$, we see that
\[
\vint_{B(x_i,18r_i)}g^p\, d\mu\le \frac{\mu(B(z,2d(z,x_i)))}{\mu(B(x_i,18r_i))}\, \vint_{B(z,2d(z,x_i))}g^p\, d\mu
\lesssim \left(\frac{2d(z,x_i)}{18r_i}\right)^Q\, \frac{\epsilon_0}{[2d(z,x_i)]^p}.
\]
Here $Q$ is the lower mass bound exponent inherited from iterating the doubling property of $\mu$; a choice of 
$Q=\log_2 D+1$ would suffice. As $d(z,x_i)=\frac{r_i}{1-a}$, it follows that 
\[
\vint_{B(x_i,18r_i)}g^p\, d\mu
	\lesssim \left(\frac{2d(z,x_i)}{18r_i}\right)^{Q-p}\, \frac{\epsilon_0}{r_i^p}
	\lesssim \frac{1}{(1-a)^{Q-p}}\, \frac{\epsilon_0}{r_i^p}\simeq  \frac{\epsilon_0}{r_i^p}.
\]
			
By Lemma \ref{lem:localweak}, with the choices of $r$ to be $r_i$ and $s$ to be $8r_i$, we know that 
there exists a sequence of points $y_i\in B(x_i,r_i)$ so that 
$M_{8 r_i} g^p(y_i) \lesssim\, \frac{\epsilon_0}{r_i^p}$.
Let $M$ denote the comparison constant from this comparison, so
\begin{equation}\label{eq:maxbounyi}
	M_{8 r_i} g^p(y_i) \le M\, \frac{\epsilon_0}{r_i^p}.
\end{equation} 
Furthermore, $d(y_i,z)\le r_i+d(x_i,z)=\left[1+\tfrac{1}{1-a}\right]r_i =(18L+1)r_{i}$.
			
Suppose that there is some $i\in\Z$ and $B\in\cB$ such that $y_i\in 2B$. We denote the center and radius of 
$B$ by $x_B$ and $r_B$ respectively. If $r_B<4r_i$, then 
\[
\frac{1}{(4r_i)^p}\le \frac{1}{r_B^p}\le M_{8r_i}g^p(y_i)\le M\, \frac{\epsilon_0}{r_i^p},
\]
from which we conclude that $\epsilon_0\ge 1/(4^p\, M)$. Hence, making sure that
\[
0<\epsilon_0<\frac{1}{4^p\, M},
\]
we know that such $B$ cannot exist. Therefore, we must have that $r_B\ge 4r_i$. 
Recall that $z\not\in 4B$. Then we have, for $z=y$ if $i \geq 0$ and $z=x$ otherwise, 
\[
	4r_B<d(x_B,z)\le d(z,y_{i})+d(y_{i},x_{B})=2r_B+(18L+1)r_i,
\]
from whence we obtain that necessarily $4r_i\le r_B\le 10L r_i$. Now as $y_i\in 2B$, we know from the geodesic property of
$X$ and by the fact that $r_B\ge 4r_i$ that $\mu(2B\cap B(y_i,r_i))\ge \mu(B(y_i,r_i))/D^2$. Therefore,
\[
\frac{1}{(10Lr_i)^p\, D^2}\le \frac{1}{D^2\, r_B^p}\le \frac{1}{r_B^p}\frac{\mu(2B\cap B(y_i,r_i))}{\mu(B(y_i,r_i))}\le M_{8r_i}g^p(y_i)\le M\, \frac{\epsilon_0}{r_i^p}.
\]
This is again not possible if 
\[
	0<\epsilon_0<\frac{1}{(10L)^p\, M\, D^2}.
\]
Thus, making sure to choose 
\[
	0<\epsilon_0<\min\bigg\lbrace\frac{1}{4^p\, M},\,\frac{1}{(10L)^p\, M\, D^2}\bigg\rbrace,
\]
we obtain that for each $B\in\cB$ the points $y_i\not\in 2B$ whenever $i\in\Z$.
			
Note that we assume that the pointwise $p$-Poincar\'e inequality holds with $\lambda=2$, see Definition~\ref{def:PointwisePI}
for the statement of this inequality. Note also that as $r_{i+1}-ar_i$ or $r_{i+1}=r_i/a$, we have
\[
	d(y_i,y_{i+1})\le d(y_i,x_i)+d(x_i,x_{i+1})+d(x_{i+1},y_{i+1})<\min\{2r_i+r_{i+1},2r_{i+1}+r_i\}
	\le \frac{2a+1}{a}\, \min\{r_i,r_{i+1}\},
\]
and so $\lambda d(y_i,y_{i+1})< 8\min\{r_i,r_{i+1}\}:=R_i$ because $1>a\ge 1/2$. Now,
invoking~\eqref{eq:maxbounyi} and using
the pointwise $p$-Poincar\'e inequality, there exist curves 
$\gamma_i\subset B(y_i,4d(y_i,y_{i+1}))$
connecting $y_i$ to $y_{i+1}$ with $\len(\gamma_i) \leq L d(y_i,y_{i+1})$ and 
\[
\int_{\gamma_i} g ds \le C_P\, d(y_i,y_{i+1})[M_{R_i}g^p(y_i)+M_{R_i}g^p(y_{i+1})]
\le C_P\, \frac{2a+1}{a}r_i\, \left[\frac{M\epsilon_0}{R_i^p}+\frac{M\epsilon_0}{R_{i}^p}\right]^{1/p}\le 
8C_P\, (2M)^{1/p}\epsilon_0^{1/p}.
\]
We restrict the choice of $\epsilon_0$ further as follows:
\[
0<\epsilon_0<\min\bigg\lbrace\frac{1}{4^p\, M},\,\frac{1}{(10L)^p\, M\, D^2},\, \frac{1}{2(8C_P)^p\, M}\bigg\rbrace,
\]
and see that then $\int_{\gamma_i}g\, ds<1$.
Thus, $\gamma_i$ does not intersect any of the balls $B\in \cB$. This is because otherwise,
$\gamma_i$ would intersect such $B$ \emph{and} at the same time have its end points $y_i,y_{i+1}$ outside the ball $2B$,
and so the integral of $g$ would be at least $1$. 
The desired curve $\gamma$ connecting $x$ to $y$ is obtained by concatenating all the curves $\gamma_i$ to each other. 
			
We now verify that the concatenated curve satisfies the listed properties claimed in the statement of the
lemma. For each integer $i$ we have that $\gamma_i\subset B(y_i,4d(y_i,y_{i+1}))$. If $w\in B(y_i,4d(y_i,y_{i+1}))$, then
\begin{align*}
	d(w,x)\le d(w,y_i)+d(y_i,x_i)+d(x_i,x)&<4d(y_i,y_{i+1})+r_i+d(y,x)\\
	&<4\, \min\{2r_i+r_{i+1},r_i+2r_{i+1}\}+r_i+d(y,x).
\end{align*}
Using the construction of $r_i$, $i\in\Z$, we see that
\[
	d(w,x)\le \begin{cases} 5r_i+8r_{i+1}+d(x,y)\le \left(\frac{13}{36L}+1\right)\, d(x,y) &\text{ if }i\ge 0,\\
	9r_i+4r_{i+1}+d(x,y)<\left(\frac{17}{36L}+1\right)\, d(x,y) &\text{ if }i<0,\end{cases}
\]
and so $d(w,x)<2\, d(x,y)$. Thus we conclude that $\gamma_i\subset B(x,2d(x,y))$, and hence $\gamma\subset B(x,2d(x,y))$.
Moreover, as 
\[
\ell(\gamma_i)\le Ld(y_i,y_{i+1})\le \tfrac{2a+1}{a}\, \min\{r_i,r_{i+1}\}\, L
\le 16L\, \frac{1-a}{2}\, d(x,y)\, a^{|i|},
\]
we have that $\ell(\gamma)\le 16\, L\, d(x,y)$. Next, for a point $w\not\in\{x,y\}$ in the trajectory of $\gamma$, note that 
there is some $i_w\in\Z$ such that $w$ is in the trajectory of $\gamma_{i_w}$. We break the rest of the proof down into
two cases, $i_w\le0$ and $i_w>0$.  Note that there may be more than one choice of $i_w$, and we want to
consider each of them. Thus for each such $i_w\leq 0$ we consider $\gamma_{x,w}$ to
represent  each subcurve of $\gamma$ with
terminal points $x,w$ such that $\gamma_{x,w}\subset \bigcup_{i=-\infty}^{i_w}\gamma_i$, and proceed. (And for $i_w>0$, we enforce $\gamma_{w,y}\subset \bigcup_{i=i_w}^{\infty}\gamma_i$.)
			
When $i_w\le0$, we have that 
\[
\ell(\gamma_{x,w})\le \sum_{i=-\infty}^{i_w}\ell(\gamma_i)\le 16L\, \frac{1-a}{2}\, d(x,y)\sum_{i=-\infty}^{i_w}a^{-i}
=8Ld(x,y)\, a^{-i_w}.
\]
On the other hand,
\begin{align*}
	d(x,w)\ge d(x,x_{i+1})-d(x_{i+1},y_{i+1})-d(y_{i+1},w)
	&= a^{-i_w-1}\, \frac{d(x,y)}{2}\, \left[ 1-(1-a)-16L(1-a)a\right]\\
	&\ge \frac{1}{18}\, d(x,y)\, a^{-i_w}.
\end{align*}
Therefore, we have the inequality
$\ell(\gamma_{x,w})\le 144L\, d(x,w)$.
By symmetry, if $i_w\ge 0$, we have that 
$\ell(\gamma_{w,y})\le 144L\, d(w,y)$. 
			
Next, if $i_w> 0$, then $\ell(\gamma_{x,w})\le \ell(\gamma)\le 16L\, d(x,y)$, and 
\begin{align*}
d(x,w)\ge d(x,x_i)-d(x_i,y_i)-d(y_i,w)&=\left[d(x,y)-a^{i_w}\frac{d(x,y)}{2}\right]-(1-a)\, a^{i_w}\frac{d(x,y)}{2}
		-\ell(\gamma_{i_w})\\
&\ge d(x,y)-\frac{d(x,y)}{2} \, a^{i_w}\, \left[2-a+16L(1-a)\right]\\
&\ge d(x,y)-a\, d(x,y)\\
&=\frac{1}{18L}\, d(x,y).
\end{align*}
We used the fact that $a^{i_w}\le a$. It follows that $\ell(\gamma_{x,w})\le 282L^2\, d(x,w)$. By symmetry,
we also obtain that when $i_w<0$, we have $\ell(\gamma_{w,y})\le 282L^2\, d(w,y)$. This completes the proof
once we have chosen $C_1=282\, L^2$.
\end{proof}

\section{Proof of Theorem \ref{thm:mainthm-geodesic}}\label{sec:proof}
		
The proof of Theorem \ref{thm:mainthm-geodesic} will be based on a contradiction argument involving iteration. 
Let $\cC$ be a countable collection of open balls. We say that a ball $B(x,r)$, with $x\in \Omega$ and $r>0$, 
is $\epsilon$-bad with respect to $\cC$ if 
\[
\xi(\cC|_{B(x,16r)})=\sum_{\fkc \in \cC|_{B(x,16r)}} \xi(\fkc)\leq \epsilon\,\xi(B(x,r)),
\] 
and $\rad(\cC|_{B(x,16r)})\leq r$. 
In our application of the following lemma to the proof of Theorem \ref{thm:mainthm-geodesic}, we will use $\cC$ to be a cover of
the accessible boundary of $\Omega$ from $x_0$, but the next lemma will be true even with a general choice of collection $\cC$. 
		
		The following Lemma is the basic step of our proof by contradiction. We briefly describe its content and how we use it. The lemma states that an $\epsilon$-bad ball within a domain satisfying the assumptions of Theorem \ref{thm:mainthm-geodesic}, with radius equal to the distance to the boundary, will satisfy one of two properties:
\begin{enumerate}
	\item either there exists a smaller $\epsilon$-bad ball, which is accessible by a John curve, 
and with radius equal to the distance to the boundary; or
	\item the collection that we started from failed to cover a certain accessible boundary.
\end{enumerate}
If we start from the assumption that $\cC$ is a covering of the accessible boundary, both of these are bad in a different way. In the second case, we actually failed to cover the desired boundary, which yields a direct contradiction. In the first case, when applied infinitely many times, we obtain a sequence of bad balls geometrically approaching the boundary, which fail to be well covered by $\cC$. The centres are connected by John curves, and Lemma \ref{lem:repeated curve} yields a limit point in the accessible boundary. The radius constraint on $\cC$ prevents this collection from covering the limit point.
		
\begin{lemma}\label{lem:iterationstep} 
Fix $\tau\ge 200$. There exist $c>1$ and $\Delta>0$
so that the following property holds.
Suppose that $\Omega$ is a proper domain, $x_0\in \Omega$,  $r_0=d(x_0,\partial  \Omega)$,  and $\xi$ is a 
doubling gauge function which, together, satisfy the inequality~\eqref{eq:Haus-scl-ass} for some $\delta\in(0,1)$.
			
If $\cC$ is a countable
collection of open balls with $\rad(\cC)\leq r_0$ and if $B_0=B(x_0,r_0)$ is an $\epsilon$-bad ball with
respect to $\cC$,  and with $\epsilon:=\Delta \delta$, then, either there exists a point 
\begin{equation}\label{eq:first-ste}
	x_1 \in B(x_0, 2r_0) \cap \partial_{x_0} \Omega(c) \setminus \bigcup \tau\cC,
\end{equation}
or there exists an $\epsilon$-bad ball, with respect to $\cC$, denoted $B_1=B(x_1,r_1)$, with 
$r_1=d(x_1,\partial\Omega) \leq r_0/641$ and with $x_1 \in B(x_0,8r_0)$ and so that $x_1$ can be 
reached from $x_0$ by a $c$-John curve.
\end{lemma}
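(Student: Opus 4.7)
The plan is a constructive iteration: using Lemma~\ref{lem:ballpointsold} at geometrically decreasing scales of depth, I will build a $c$-John curve from $x_0$ toward $\partial\Omega$ while avoiding $\cC$; either the construction reaches a visible boundary point (option~1) or it halts at an intermediate $\epsilon$-bad ball at scale $\le r_0/641$ (option~2). First I would exploit the hypothesis and the $\epsilon$-badness to produce a large ``visible'' subset of $\partial\Omega$: the assumption~\eqref{eq:Haus-scl-ass} gives $\cH^\xi_{r_0}(B(x_0,2r_0)\setminus\Omega)\ge\delta\,\xi(B_0)$, while the $\epsilon$-badness of $B_0$ together with the doubling of $\xi$ yields $\xi(\tau\cC|_{B(x_0,16r_0)})\le C_\tau\,\epsilon\,\xi(B_0)$ (since balls in $\cC|_{B(x_0,16r_0)}$ have radius $\le r_0$ and hence their $\tau$-inflations have radius $\le \tau r_0$). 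Choosing $\Delta$ small enough that $C_\tau\Delta\le 1/2$, subadditivity gives $\cH^\xi_{r_0}(K)\ge (\delta/2)\,\xi(B_0)$, where $K:=(B(x_0,2r_0)\setminus\Omega)\setminus\bigcup\tau\cC$.

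Next I would set up the conversion from $\xi$-sparseness to $h_p$-sparseness. For any $z\notin\bigcup\tau\cC$ (in particular $z\in K$) and any $r>0$, any $B\in\cC$ meeting $B(z,r)$ must satisfy $\rad(B)\le r/(\tau-1)\le r/199$, since otherwise $z\in\tau B$; hence the hypothesis of Lemma~\ref{lem:conentconv} applies at scale $r$ around $z$, giving
\[
\frac{h_p(\cC|_{B(z,r)})}{h_p(B(z,r))}\le K_1\,\frac{\xi(\cC|_{B(z,r)})}{\xi(B(z,r))},
\]
which is the exact form of sparseness required by Lemma~\ref{lem:ballpointsold}.

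The iteration then proceeds as follows. Pick $y\in K$ and a geodesic $\sigma$ from $x_0$ to $y$; choose a sequence $z_0=x_0,z_1,\dots,z_N$ with $d(z_j,\partial\Omega)=r_j$ and $r_j/r_{j+1}$ a fixed factor $b>1$, taken large enough so that $r_N\le r_0/641$ yet small enough that the $z_j$'s can be picked with $d(z_j,z_{j+1})\lesssim r_j$ along geodesics toward the nearest boundary point, and arranged via Lemma~\ref{lem:localweak} to avoid $\bigcup 4\cC$. For each pair $(z_j,z_{j+1})$ the $h_p$-sparseness required by Lemma~\ref{lem:ballpointsold} at scales $\lesssim r_j$ follows from the previous paragraph combined with the $\epsilon$-badness of $B_0$, which bounds $\xi(\cC|_{B(z_j,r)})\le \epsilon\,\xi(B_0)$ for $r\lesssim r_0$. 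The resulting piece $\gamma_j\subset B(z_j,2r_j)$ stays at depth $\ge r_{j+1}$ and has length $\lesssim r_j$, hence is a $c_1$-John curve; Lemma~\ref{lem:repeated curve} concatenates the $\gamma_j$ into a $c_2$-John curve from $x_0$ to $x_1^*:=z_N\in B(x_0,2r_0)$ with $d(x_1^*,\partial\Omega)\le r_0/641$.

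Finally, if $B(x_1^*,d(x_1^*,\partial\Omega))$ is $\epsilon$-bad then $x_1:=x_1^*$ gives option~2; otherwise $\cC$ concentrates near $x_1^*$, and a Vitali/pigeonhole argument on $K$ (exploiting its large $\xi$-content from the first paragraph together with the sparseness conversion above) produces a ``good'' $y'\in K$ around which $\xi$-sparseness of $\cC$ holds at every scale in $(0,8r_0]$, so that a final application of Lemma~\ref{lem:ballpointsold} to $(x_1^*,y')$ extends the curve to $y'\in\partial\Omega\setminus\bigcup\tau\cC$, proving option~1. The main obstacle is precisely this last case: ensuring that even when $\cC$ densifies just past $x_1^*$, a sparse corridor to some $y'\in K$ remains available. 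This is resolved by calibrating $\Delta$ small relative to $\epsilon_0$, $K_1$, $D$, and $\tau$, so that the $\cC$-mass budget allotted by the $\epsilon$-badness cannot simultaneously block every $y'\in K$ at all scales.
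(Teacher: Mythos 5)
There are two genuine gaps, and the first is fatal to the constructive strategy. Your iteration needs the hypothesis \eqref{eq:Bojar} of Lemma~\ref{lem:ballpointsold} at scales comparable to $r_j$ around the intermediate points $z_j$, and you claim to get it ``from the $\epsilon$-badness of $B_0$, which bounds $\xi(\cC|_{B(z_j,r)})\le \epsilon\,\xi(B_0)$.'' But $\epsilon$-badness is a \emph{top-scale} bound: it compares the mass of $\cC$ to $\xi(B(x_0,r_0))$, not to $\xi(B(z_j,r))$. When $r\ll r_0$ (and your $r_j$ decrease geometrically to below $r_0/641$), the ratio $\xi(B(z_j,r))/\xi(B_0)$ can be arbitrarily small (already for $\xi=h_q$ on an Ahlfors regular space it is $\simeq (r/r_0)^{\text{positive power}}$), so $\epsilon\,\xi(B_0)$ can exceed $\xi(B(z_j,r))$ by an arbitrary factor and no smallness of $\Delta$ repairs this: $\cC$ may concentrate all of its allotted mass exactly along the corridor you want the curve to follow. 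This is precisely the obstruction the paper's proof is organized around: it never tries to certify sparseness along a chosen path. Instead it assumes \eqref{eq:first-ste} fails, removes the high-density set $H_\eta$ (small content, so $E\setminus H_\eta$ stays large), and for a fixed $x\in E\setminus H_\eta$ shows that every curve satisfying the conclusion of Lemma~\ref{lem:ballpointsold} must hit a blocking family $160\cB_x'\cup 18\cC_b$ built from small boundary-hugging balls with John-accessible centers; the \emph{contrapositive} of Lemma~\ref{lem:ballpointsold} then forces a lower bound on $h_p$ of that family, and a pigeonhole over the disjointified balls $\cB_x'$ — using the $(10S-1)^{Q-p}$ gain from the codimension hypothesis and the $\eta$-density bound at $x$ — produces the new $\epsilon$-bad ball. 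Your ``sparse corridor'' in the last paragraph is essentially asserting the conclusion of that pigeonhole rather than proving it.

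The second gap is the John property of your pieces. Lemma~\ref{lem:ballpointsold} produces curves satisfying $\len(\gamma|_{[t,1]})\le C_1\,d(\gamma(t),y)$ and the other three curve conditions, but these are bounds relative to the \emph{endpoints}, not to $d(\gamma(t),\partial\Omega)$; since $d(\gamma(t),\partial\Omega)\le d(\gamma(t),y)$, they do not imply the John condition, and the avoidance curve is not guaranteed to stay at depth $\ge r_{j+1}$, or even inside $\Omega$, because the lemma knows nothing about $\partial\Omega$ (only about the avoided collection). So ``stays at depth $\ge r_{j+1}$ and has length $\lesssim r_j$, hence is a $c_1$-John curve'' is unjustified, and Lemma~\ref{lem:repeated curve} cannot be invoked. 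In the paper the John curves arise differently: a curve satisfying the four curve conditions that is \emph{not} $c$-John has a first parameter $t^*$ where the John inequality fails, and the truncation $\gamma|_{[0,t^*]}$ is $c$-John by minimality of $t^*$; its terminal ball $B_\gamma$ is what feeds the blocking family, and the John accessibility of the new $\epsilon$-bad ball's center is inherited from membership in $\cB_x$. Finally, note that option~2 of the statement also requires $\rad(\cC|_{B(x_1,16r_1)})\le r_1$; your dichotomy at $x_1^*$ does not address this radius constraint, which in the paper is built into the definition of $\cB_x$ versus $\cC_b$.
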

		
\begin{proof} 
Let $\delta>0$ be the parameter given by the hypothesis~\eqref{eq:Haus-scl-ass}.
			
Without loss of generality, assume that for all $\fkc \in \cC$ we have $\fkc \cap B(x_0,16r_0)\neq \emptyset$. 
We have that $(X,d,\mu)$ satisfies a pointwise Poincar\'e inequality with constants $(C_P,L,2)$. 
In addition to the listed constants from Subsection~\ref{sub:convention},
there will be several parameters chosen in the proof. 
Their precise values are not important, however the order is significant, which is why we list them here, in the order 
in which they are determined, so that consistency of the choices is easier to observe.
\begin{enumerate}
\item Fix $\tau\geq 200$. We choose some $k\in \N$ such that $\tau \leq 2^k$ and $k\ge 10$. 
\item Parameters $\epsilon_0, C_1$ come from Lemma \ref{lem:ballpointsold}.
\item By assumption $\xi$ has codimension greater than $p$, that is codimension 
at least $Q$ with $Q>p$. Let $K_1$ be the corresponding constant from Lemma \ref{lem:conentconv}.                                      
\item Choose a density parameter 
\begin{equation}\label{eq:eta}
	\eta \in \left(0,\ \left(\frac{5}{36}\right)^{Q-p}\frac{\epsilon_0}{2K_1\, D^5}\right).  
\end{equation}
\item Choose the parameter from the statement of the lemma as 
\begin{equation}\label{eq:Cylon}
	\epsilon =\Delta \delta \in \left(0,\ \min\left\{D^{-20}\delta,\ \frac{\delta}{2\, C(\tau)\, D^k},\ 
	\frac{\delta  \eta}{4\, C(640)\, D^9}\right\}\right).
\end{equation}
This ensures 
\begin{equation}\label{eq:Delta}
	\Delta \in \left(0,\ \min\left\{D^{-20},\ C(\tau)^{-1}D^{-k}/2,\  \frac{\eta}{4\, C(640)\, D^9}\right\}\right).
\end{equation}
Note that $\Delta$ on its own does not play a role in this lemma, but will be needed in the
proof of the main theorem. For the convenience of the reader we added the condition for $\Delta$ here.
The constants $C(\tau),C(640)\ge 1$ are from Lemma~\ref{lem:si-cont-scale-change}.
\item Then, fix an expansion parameter 
\begin{equation}\label{eq:S}
	S>\max\left\{64, 1+\left(\frac{2K_1\eta D^{14}}{\epsilon\, \epsilon_0}\right)^{1/(Q-p)}\right\}.
\end{equation}
\item Based on $S$ and $C_1$, choose the John-curve parameter as $c\geq 1600 S C_1$.
\end{enumerate}
			
Let $E=B(x_0,2r_0) \setminus\left(\Omega\cup \bigcup \tau \cC \right)$. Suppose that~\eqref{eq:first-ste} is false.
In other words, for every $x\in E$, there is no $c$-John curve from $x_0$
which reaches $x$. Note that for some $x\in E$ this may simply be due to the fact that $x$ lies outside  
$\overline{\Omega}$. Our goal is to find an $\epsilon$-bad ball $B_1=B(x_1,r_1)\subset\Om$ with 
$r_1=d(x_1,\partial \Omega)\leq r_0/2$ and $x_1\in B(x_0,8r_0)$
so that $x_1$ can be reached from $x_0$ by a $c$-John curve based at $x_0$.
Given the initial setup, we now move to remove from $E$ a set of points where the ``density'' of  $\cC$ is too 
large, and to show that the resulting set has large density.
			
By ~\eqref{eq:Haus-scl-ass} we have  
$\cH^\xi_{r_0}(B(x_0,2r_0)\setminus \Omega)\geq \delta\, \xi(B(x_0,r_0))$, and thus
\[
\delta \xi(B(x_0,r_0))\le \cH^\xi_{r_0}(B(x_0,2r_0)\setminus \Omega)
\le \cH^\xi_{r_0}(E)+\cH^\xi_{r_0}(\bigcup\tau\cC).
\]
By Lemma~\ref{lem:si-cont-scale-change} and by the $\epsilon$-badness of $B(x_0,r_0)$, we know that
\[
\cH^\xi_{r_0}\left(\bigcup\tau\cC\right)\le C(\tau)\, \cH^\xi_{\tau r_0}\left(\bigcup\tau\cC\right)
\le C(\tau)\, \xi(\tau\cC)\le C(\tau)\, D^k\, \xi(\cC)
<C(\tau)\, D^k\, \epsilon \, \xi(B(x_0,r_0)).
\]
By~\eqref{eq:Cylon}, we know that $\epsilon < C(\tau)^{-1}D^{-k}\delta/2$, and so
\begin{equation}\label{eq:est-delta/2}
\frac{\delta}{2}\, \xi(B(x_0,r_0))\le (\delta-C(\tau)D^k\, \epsilon)\, \xi(B(x_0,r_0))<\cH^\xi_{r_0}(E).
\end{equation}
			
Next, we remove the high density points of $\cC$. Let 
\[
H_\eta :=\{x\in E: \exists r\in (0,64r_0), \xi((18\cC)|_{B(x,r)}) \geq \eta \xi(B(x,r))\}.
\]
We now show that $H_\eta$ has small content, more specifically,
that~\eqref{eq:Heta-est} holds. This will be done by finding a suitable cover of 
$H_\eta$. If $x\in E$  we have $x\notin \bigcup \tau \cC$. Let $r>0$.
If there is some $B\in\cC$ such that $18B\cap B(x,r)$ is non-empty, then denoting 
the center and radius of $B$ by $x_B$ and $r_B$,
we have that $\tau r_B<d(x,x_B)\le r+18r_B$, from which we see that 
\begin{equation}\label{eq:radiusbound}
	r>(\tau-18)r_B.
\end{equation}
As $\tau\geq 200>18$, it follows that
$r_B<r/(\tau-18)$. Hence, for each $x\in H_\eta$ there is some $0<r_x<64r_0$ such that 
$\xi((18\cC)|_{B(x,r_x)})\ge \eta\, \xi(B(x,r_x))$. 
By~\eqref{eq:radiusbound}, necessarily $\rad((18\cC)|_{B(x,r_x)})<18r_x/(\tau-18)$.
The collection $\{B(x,2r_x)\, :\, x\in H_\eta\}$ forms a cover of $H_\eta$,
and so by the $5$-covering theorem we can extract a countable pairwise disjoint subcollection $\{B(x_i,2r_i)\}_i$ of this cover
such that $\{B(x_i,10r_i)\}_i$ covers $H_\eta$.
			
We claim that if $B_1\in (18\cC)|_{B(x_i,r_i)}$ and $B_2\in (18\cC)|_{B(x_j,r_j)}$ with $j\ne i$, 
then $B_1$ and $B_2$ are disjoint. To see this, note that by the fact that $\rad(B_1)<18r_i/(\tau-18)$ and $\tau\geq 128$,
necessarily $B_1\subset B(x_i,2r_i)$. Similarly $B_2\subset B(x_i,2r_j)$. As $B(x_i,2r_i)$ and $B(x_j,2r_j)$ are disjoint, the claim
follows.
			
Now, by Lemma~\ref{lem:si-cont-scale-change}, 
\begin{align*}
	\cH^\xi_{r_0}(H_\eta)\le C(640)\, \cH_{640r_0}^\xi(H_\eta)\le C(640)\, \sum_i\xi(B(x_i,10r_i))
	&\le C(640)\, D^4\, \sum_i\xi(B(x_i,r_i))\\
	&\le C(640)\, D^4\, \eta^{-1}\, \sum_i\xi((18\cC)|_{B(x_i,r_i)})\\
	&\le C(640)\, D^4\, \eta^{-1}\, \xi(18\cC)\\
	&\le C(640)\, D^9\, \eta^{-1}\, \xi(\cC)\\
	&\le C(640)\, D^9\, \eta^{-1}\, \epsilon\, \xi(B(x_0,r_0)).
\end{align*}
Hence, the restriction on $\epsilon$ given in~\eqref{eq:Cylon} by
\[
\epsilon<\frac{\delta  \eta}{4\, C(640)\, D^9}
\]
yields that 
\begin{equation}\label{eq:Heta-est}
	\cH^\xi_{r_0}(H_\eta)<\tfrac{\delta}{4}\, \xi(B(x_0,r_0)),
\end{equation} 
and so by~\eqref{eq:est-delta/2} and sub-additivity of Hausdorff content, we have that
\begin{equation}\label{eq:EminHeta-est}
	\frac{\delta}{4}\, \xi(B(x_0,r_0))<\cH^\xi_{r_0}(E\setminus H_\eta).
\end{equation}
			
At this juncture, we have obtained a ``large'' set $E\setminus H_\eta$, which is not covered by $\tau\cC$. 
None of these points can be reached by a $c$-John curve from $x_0$. Fix then any $x\in E\setminus H_\eta$. 
Let $S\geq 1$ be as in~\eqref{eq:S} and let $C_1\geq 1$ be the constant from 
Lemma~\ref{lem:ballpointsold}. The point $x$ can not be reached by a $c$-John curve from $x_0$. 
			
We will now examine certain curves that connect $x_0$ to $x$ and show that each one of them must approach very closely to the 
boundary of the domain before reaching $x$. Let $\gamma:[0,1]\to B(x,2d(x_0,x))$ be any curve 
of length $C_1 d(x,x_0)$ 
connecting $x_0$ to $x$ and with $C_1d(\gamma(t),x_0)\geq  \len(\gamma|_{[0,t]})$ and 
$C_1d(\gamma(t),x)\geq  \len(\gamma|_{[t,1]})$ for all $t\in [0,1]$. We call these \emph{the four curve conditions}. 
As $X$ is a geodesic space, there is at least one such curve $\gamma$, namely the geodesic
curve with end points $x_0$ and $x$.
Since $x\in E$, no such curve  $\gamma$ 
is $c$-John with respect to $x_0$. Consequently, for every such $\gamma$, 
there exists some $t\in [0,1]$ so that 
\begin{equation}\label{eq:tcond}
	d(\gamma(t),X\setminus \Omega)\leq \frac{1}{c}\len(\gamma|_{[t,1]})\leq \frac{C_1}{c}d(\gamma(t),x).
\end{equation}
Choose the smallest $t^*$ for which the first inequality above holds. 
First, note that $t^*>0$, for when $t$ is small enough so that
$d(x_0,\gamma(t))<[1-2C_1/c]\, d(x_0,\partial\Om)$ and $\gamma(t)\in\Om$, we have that 
\[
d(\gamma(t),X\setminus\Om)=d(\gamma(t),\partial\Om)\ge d(x_0,\partial\Om)-d(x_0,\gamma(t))
>\tfrac{2C_1}{c}\, d(x_0,\partial\Om)\ge \tfrac1c\, \ell(\gamma)\ge \tfrac1c\, \ell(\gamma|_{[t,1]}).
\]
Next, note that $t^*<1$, for if not, then for each $t<1$ we have that
\[
d(\gamma(t),X\setminus \Omega)> \frac{1}{c}\len(\gamma|_{[t,1]}),
\]
and then letting $t\to 1^-$ we obtain that $\gamma$ is actually a $c$-John curve from $x_0$, which is not possible.
Hence, we have that $0<t^*<1$. Observe also that $\gamma(t^*)\in \Om$, for if not, then we must have that
$d(\gamma(t^*),X\setminus\Om)=0$, but then by the minimality of $t^*$, for each $0<t<t^*$ we have that 
$d(\gamma(t),X\setminus\Om)>\tfrac1c\ell(\gamma|_{[t,1]})$, and letting $t\to (t^*)^-$ we obtain that
$\ell(\gamma|_{[t^*,1]})=0$, i.e., $t^*=1$, again a contradiction. Thus $\gamma(t^*)\in \Om$.
Next, for each $0\le t<t^*$ we have that
\[
d(\gamma(t),X\setminus \Omega)> \frac{1}{c}\len(\gamma|_{[t,1]})\ge \frac{1}{c}\len(\gamma|_{[t,t^*]}).
\]
It follows that $\gamma|_{[0,t^*]}$ is a $c$-John curve from $x_0$ to $\gamma(t^*)$.
			
By ensuring that $c\geq 1600S C_1$, we obtain for the ball $B_\gamma:=B(z,s)$ 
with $z=\gamma(t^*)$ and $s=d(\gamma(t^*),\partial \Omega)>0$ that $s\leq d(x,z)/(1600S)$. 
Further, as $x\in E\subset B(x_0,2r_0)$, we have that
$d(x,z)\leq 8 r_0$, and thus $s< (641)^{-1}r_0$.
As noted, the curve $\gamma|_{[0,t^*]}$ is $c$-John and connects $x_0$ to $z$. 
			
Such a ball $B_\gamma$, or a fixed inflation of it, must intersect a ball in one of two collections which we now define:
\begin{align*}
	\cB_x := \{B(\widehat{z},s):\ & \widehat{z}\in \Omega\cap B(x_0, 8r_0), s=d(\widehat{z},\partial \Omega)
	< \min\{641^{-1}r_0, d(x,\widehat{z})/(1600S)\} \text{ and }\\
	&z \text{ can be reached by a $c$-John curve from } x_0 \text{ and } \rad(\cC|_{B(\widehat{z},16s)})\leq s\},
\end{align*} 
\begin{align*}
		\cC_b := \{\fkc \in \cC: \, \exists \widehat{z}\in \Omega, s&=d(\widehat{z},\partial \Omega)
		< \min\{641^{-1}r_0, d(x,\widehat{z})/(1600S)\} \text{ and }\\
		&\, \fkc\cap B(\widehat{z},16s)\neq \emptyset, \rad(\fkc)> s\}.
\end{align*}  
Then, with $z=\gamma(t^*)$, we have that either $B_\gamma=B(z,s)\in \cB_x$,
or else there is some $B\in\cC$ such that $B$ intersects $B(z,16s)$ and $\rad(B)>s$. 
See Figure \ref{fig:curves} 
for a depiction of the settings and alternatives just discussed.
			
\begin{figure}[h!]\includegraphics[width=.8\textwidth]{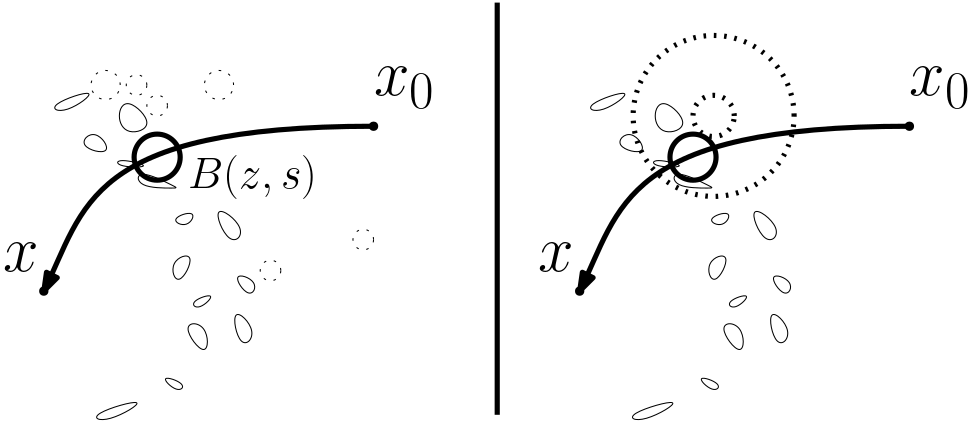}
				\caption{A figure depicting a curve $\gamma$ connecting $x$ to $x_0$, which satisfies the four curve conditions. Since the curve is not $c$-John, it approaches the boundary of the region (depicted as the collection of small obstacle sets between $x$ and $x_0$) too closely. The ball $B_\gamma=B(\gamma(t^*),s)$, which was constructed is shown as bolded, and some balls from $\cC_b$ are shown dashed. On the left hand side all of the balls in $\cC_b$ are quite small, and we have that $B_\gamma\in \cB_x$. On the right, the ball $B_\gamma$ intersects a ball in $\cC_b$ which is big, and thus is contained in its inflation. }\label{fig:curves}
			\end{figure}

Return now to our curve $\gamma$ and the ball $B_\gamma$.  If $B_\gamma\in \cB_x$, then $\gamma$ intersects 
$\bigcup \cB_x$. If $B_\gamma\not\in \cB_x$, then the only condition from the definition of $\cB_x$ which can fail 
is the last one. That is, $\rad(\cC|_{16B_\gamma})> s$, and there exists a $\fkc \in \cC$ with 
$\fkc\cap 16B_\gamma\neq\emptyset$ and $\rad(\fkc)> s$. That is, $\fkc \in \cC_b$. Thus, 
$B_\gamma \subset 18B$, and $\gamma$ so intersects a ball $18B$ for some $\fkc \in \cC_b$. In 
either case, every $\gamma$ which satisfies the four curve conditions intersects $\bigcup \cB_x \cup 18\cC_b$. 
Our goal next is to use this fact to deduce that the balls in $\cB_x$ have large content, and to 
apply a pidgeonholing argument to deduce that one of the balls in the collection $\cB_x$ is the $\epsilon$-bad ball which we are looking for, see Figure \ref{fig:blocking}. We will first need some preliminary work and to move to a disjoint subcollection.
	
\begin{figure}[h!]
		{\includegraphics[width=.4\textwidth]{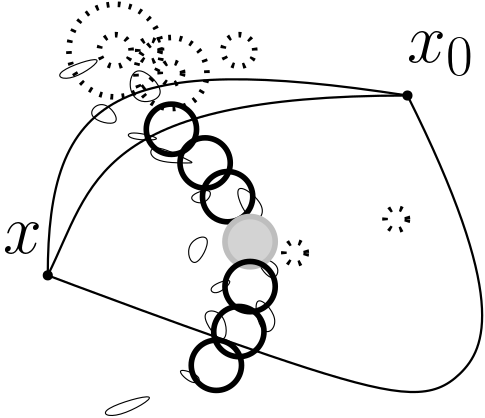}}
		\caption{A figure depicting some curves connecting $x$ and $x_0$, which satisfy the four curve conditions. All of these curves are blocked by a frontier formed by inflations of balls from $\cB_x$ (depicted as bolded balls), and some inflated balls in $\cC_b$, which are bolded and dotted. There are only relatively few balls in $\cC_b$, which is a consequence of the fact that $x\not\in H_\eta$. The argument of the proof gives a lower bound for the content of the balls coming from $\cB_x$. This uses the blocking property in conjunction with Lemma \ref{lem:ballpointsold}, and the fact that $x\not\in H_\eta$. Then, a pidgeonholing argument shows that one of the balls in $\cB_x$ is $\epsilon$-bad. In the figure, the $\epsilon$-bad ball is shaded in gray.} \label{fig:blocking}
	\end{figure}

Let $32\cB_x'\subset 32\cB_x$ be a disjoint collection so that $\bigcup \cB_x \subset \bigcup 160\cB_x'$. Every curve which satisfies the four curve conditions intersects some ball in $\cB_x \cup 18\cC_b$ and thus some ball in $160\cB_x' \cup 18\cC_b$.
Observe that as $32\cB_x'$ is a pairwise disjoint collection of open balls, necessarily it is a countable
collection. Moreover, as $\cC$ is a countable collection by hypothesis, therefore $\cC_b$ is also countable.
We will now aim to apply Lemma~\ref{lem:ballpointsold} to the countable collection
$\cB:=160\cB_x'\cup 18\cC_b$. However, to do so, we need to verify 
\begin{equation}\label{eq:intersection}
\{x,x_0\}\cap \left(\bigcup 640 \cB'_x \cup \bigcup 72\cC_b\right) = \emptyset.
\end{equation}
	
	Since every $B(z,s)\in \cB_x$ satisfies $s=d(z,\partial \Omega)< \min\{641^{-1}r_0, d(x,z)/(1600S)\}$, we have $\{x,x_0\}\cap \bigcup 640 \cB_x = \emptyset$. Since $640\cB_x'\subset 640\cB_x$, we get $\{x,x_0\}\cap \bigcup 640 \cB_x' = \emptyset$.  
To show that $\{x,x_0\}\cap \bigcup 72\cC_b=\emptyset$ it suffices to prove that 
$\{x,x_0\}\cap \bigcup72\cC = \emptyset$. Because $\tau \geq 200$ and so 
$72\cC_b\subset \tau\cC$, it follows from the fact that $x\in E\subset B(x_0,2r_0)\setminus\tau\cC$ that 
$x\not\in \bigcup 72 \cC_b \subset \bigcup \tau \cC$. So, 
we only need to check that $x_0\not\in \bigcup72\cC_b$. 
	
We now show the stronger conclusion that 
\begin{equation} \label{eq:cbballs}
B(x_0,r_0/2) \cap \bigcup 72 \cC_b =\emptyset.
\end{equation}
Indeed, if there was some $\fkc \in \cC_b$ so that $B(x_0,r_0/2)\cap 72B\neq \emptyset$, then
\[ 
r_0=d(x_0,\partial\Om)\le 
	\frac{r_0}{2}+72r_B+d(x_B,z)+d(z,\partial\Om)
	<\frac{r_0}{2}+90r_B,
\]
where $B=B(x_B,r_B)$. Hence $\tfrac{r_0}{180}<r_B\le r_0$. Therefore, 
\[
	\xi(B(x_0,r_0))\le D^8\xi(B(x_0,180r_B)\le D^9\xi(B(x_B,180r_B))\le D^{17}\xi(B).
\]
Since $\epsilon<D^{-20}<D^{-17}$, it follows that
$\xi(\fkc)\ge \epsilon \xi(B(x_0,r_0))$, which violates the assumption that $B(x_0,r_0)$ is
$\epsilon$-bad.
We thus get the disjointness of $B(x_0,r_0/2)$ and $\bigcup 72\cC_b$. This completes the proof 
of~\eqref{eq:cbballs} and hence of~\eqref{eq:intersection}. 
	
Since every curve satisfying the four curve conditions intersects a ball in $\cB$, 
we see that the conclusion of Lemma~\ref{lem:ballpointsold} fails, and hence
the hypotheses of that lemma must also fail. By construction, we know that $\cB$ does not intersect $x,x_0$. 
Therefore, \eqref{eq:Bojar} fails for either $z=x$ or for $z=x_0$ and for some $0<s\le 4d(x,x_0)$. Hence,
for some $\rho\in (0,8r_0)$ so that for either $z=x$ or for $z=x_0$,
\begin{equation}\label{eq:bsum}
	h_p((160\cB_x' \cup 18\cC_b)|_{B(z,\rho)}) \geq \epsilon_0 h_p(B(z,\rho)).
\end{equation}
If we are forced to have $z=x_0$ in~\eqref{eq:bsum}, then as $d(x,x_0)<2r_0$, we have
$B(x_0,\rho)\subset B(x,2r_0+\rho)\subset B(x_0,4r_0+\rho)$. We now claim that in this situation 
$\rho\ge r_0/2$. Indeed, if not, then
by~\eqref{eq:cbballs} we have that $18\cC_b\cap B(x_0,\rho)=\emptyset$, and, moreover, if 
$B=B(w,s)\in\cB_x$, then for $y\in B(w,160s)$ we know that 
\[
	d(y,x_0)\ge d(w,x_0)-160s\ge d(x_0,\partial\Om)-d(w,\partial\Om)-160s=r_0-161s>r_0-161\tfrac{r_0}{641}>\tfrac{r_0}{2},
\]
from which we also conclude that $160\cB_x'\cap B(x_0,\rho)=\emptyset$, violating~\eqref{eq:bsum}. If $z=x_0$, 
then necessarily $\rho\ge \tfrac{r_0}{2}$, proving the claim. So, 
$B(x_0,\rho)\subset B(x,2r_0+\rho)\subset B(x_0,9\rho)$. Therefore, by~\eqref{eq:bsum},
\begin{align*}
	h_p(B(x,2r_0+\rho))=\frac{\mu(B(x,2r_0+\rho))}{(2r_0+\rho)^p}
	&\le \frac{\mu(B(x_0,9\rho))}{(9\rho)^p}\, \left(\frac{9\rho}{2r_0+\rho}\right)^p\\
	&\le D^5\, \left(\frac45\right)^p\, h_p(B(x_0,\rho))\\
	&\le D^5\, \frac{1}{\epsilon_0}\, h_p((160\cB_x' \cup 18\cC_b)|_{B(x_0,\rho)}) \\
	&\le D^5\,  \frac{1}{\epsilon_0}\, h_p((160\cB_x' \cup 18\cC_b)|_{B(x,2r_0+\rho)}).
\end{align*}
Hence, we now have the existence of some $\rho'=2r_0+\rho\in (0,10r_0)$ such that 
\begin{equation}\label{eq:bsum-2}
	h_p((160\cB_x' \cup 18\cC_b)|_{B(x,\rho')})\ge D^{-5}\epsilon_0\, h_p(B(x,\rho')).
\end{equation} 
Because $x\not\in H_\eta$, we have that for each $r\in (0,64r_0)$,
\[
\xi((18\cC_b)_{B(x,r)})<\eta\, \xi(B(x,r)).
\]
Applying Lemma~\ref{lem:conentconv} to the choice of $r=\rho'$, together with the fact
that $\rho\ge r_0/2$ and so $\rho'\ge 5r_0/2$, we obtain
\[
	\eta>\frac{h_p((18\cC_b)|_{B(x,\rho')})}{h_p(B(x,\rho'))}\, \frac{1}{K_1}\, \left(\frac{\rho'}{\rad((18C_b)|_{B(x,\rho')})}\right)^{Q-p}
\ge \frac{h_p((18\cC_b)|_{B(x,\rho')})}{h_p(B(x,\rho'))}\, \frac{1}{K_1}\, \left(\frac{5}{36}\right)^{Q-p},
\]
and so by~\eqref{eq:eta},
\[
\frac{h_p((18\cC_b)_{B(x,\rho')})}{h_p(B(x,\rho'))}<\eta K_1\left(\frac{36}{5}\right)^{Q-p}
<\frac12D^{-5}\epsilon_0,
\]
whence from~\eqref{eq:bsum-2}, together with subadditivity, we see that 
\begin{equation}\label{eq:bsum-3}
		\frac{\epsilon_0}{2D^5}\, h_p(B(x,\rho'))<h_p((160\cB_x')|_{B(x,\rho')}).
\end{equation}
If, on the other hand, we had $z=x$ in~\eqref{eq:bsum}, then we also have the validity of~\eqref{eq:bsum-2},
and so the above argument using $x\not\in H_\eta$ yields the validity of~\eqref{eq:bsum-3} in both cases $z=x$ 
and $z=x_0$.
	
If $B\in\cB_x'$ such that $160B\cap B(x,\rho')\ne\emptyset$, then by the construction of $\cB_x\supset\cB_x'$, we 
know that $\rad(\cC|_{16B})\le \rad(B)$ and $\bigcup\cC|_{16B}\subset 18B$. Since $32\cB_x'$ is pairwise 
disjoint, we have that the family $\{\cC|_{16B}\}_{B}$ is pairwise disjoint as $B$ ranges over $32\cB_x'$.
	
Note also that for $B\in\cB_x$, the distance between $x$ and the center of $B$ is larger than $1600S\, \rad(B)$, and so
if in addition $160B\cap B(x,\rho')$ is nonempty, then we must have $1600S\, \rad(B)<\rho'+160\rad(B)$,
and hence 
\begin{equation}\label{eq:rho-prime}
	\rho'>(1600S-160)\, \rad(B) \text{ and }\rad(160\cB_x'|_{B(x,\rho')})\le \frac{\rho'}{10S-1}.
\end{equation}
Now, if $160B\in (160\cB_x')|_{B(x,\rho')}$ and $B'\in\cC|_{16B}$, then by the above arguments, we know that
$\rad(B')\le \rad(B)\le \tfrac{\rho'}{160(10S-1)}$. Moreover, for $w\in B'$, 
\[
d(x,w)<\rho'+160\,\rad(B)+16\,\rad(B)+2\,\rad(B')<\frac{160(10S-1)+178}{160(10S-1)}\rho'\le 2\rho'.
\]
Therefore, $\bigcup\cC|_{16B}\subset B(x,2\rho')$.
	
Now, as $x\not\in H_\eta$, and as $2\rho'<64r_0$, we see that
\[
\eta \xi(B(x,2\rho'))\ge \xi((18\cC)|_{B(x,2\rho')})=\sum_{B\in(18\cC)|_{B(x,2\rho')}}\xi(B)
\ge \sum_{B\in\cB_x'}\xi(\cC|_{16B}).
\]
An application of Lemma~\ref{lem:conentconv}, \eqref{eq:rho-prime},  and~\eqref{eq:bsum-3} gives
\[
\frac{\epsilon_0}{2D^5}\le \frac{h_p((160\cB_x')|_{B(x,\rho')})}{h_p(B(x,\rho'))}
\le \frac{K_1}{(10S-1)^{Q-p}}\, \frac{\xi((160\cB_x')|_{B(x,\rho')})}{\xi(B(x,\rho'))}.
\]
From the above two sets of inequalities, we obtain
\begin{align*}
\sum_{B\in (160B_x')|_{B(x,2\rho')}}\xi(\cC|_{\tfrac{1}{10}B})\le 
\sum_{B\in\cB_x'}\xi(\cC|_{16B})&\le \eta\, \xi(B(x,2\rho'))
\le  \frac{K_1}{(10S-1)^{Q-p}}\frac{2D^5}{\epsilon_0}\, D\, 
\xi((160\cB_x')|_{B(x,2\rho')})\\
&= \frac{K_1}{(10S-1)^{Q-p}}\frac{2D^5}{\epsilon_0}\, D\, 
\sum_{B\in(160B_x')|_{B(x,2\rho')}}\xi(B).
\end{align*}
Hence, by pidgeon-holing two of the sums in the previous equation, we obtain a ball $B\in\cB_x'$ 
such that $160B\cap B(x,2\rho')\ne\emptyset$ and
\[
\xi(\cC|_{16B})\le \frac{K_1}{(10S-1)^{Q-p}}\frac{2D^6}{\epsilon_0}\xi(160B)
\le  \frac{K_1}{(10S-1)^{Q-p}}\frac{2D^6}{\epsilon_0}\, D^8\, \xi(B).
\]
By the choice of $S$ from~\eqref{eq:S}, we now see that 
\[
\xi(\cC|_{16B})<\epsilon \, \xi(B),
\]
that is, $B$ is an $\epsilon$-bad ball that satisfies all the conditions listed in the conclusion of the lemma.
\end{proof}

We are now ready to prove Theorem~\ref{thm:mainthm-geodesic}. For an illustration of the proof, see Figure \ref{fig:finalarg}.

\begin{figure}[h!]\includegraphics[width=.8\textwidth]{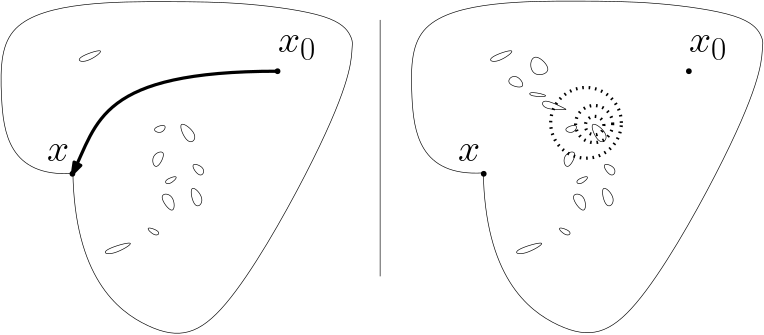}
	\caption{A figure depicting the final proof of Theorem \ref{thm:mainthm-geodesic}. The idea is to repeatedly use Lemma \ref{lem:iterationstep}. At each step of the argument, we face a dichotomy: either we have a $c$-John curve which connects $x_0$ to an $x$ which is not covered by $\cC$, \emph{or} we have a smaller $\epsilon$-bad ball (depicted as a dashed ball on the right). The first case is depicted on the left of the figure, and the second case is depicted on the right. The idea in the second case is to repeat the argument to obtain a geometrically converging sequence of $\epsilon$-bad balls, whose limit point can be reached by a $c_2$-John curve. This leads to a contradiction.}\label{fig:finalarg}
\end{figure}

\begin{proof}[Proof of Theorem~\ref{thm:mainthm-geodesic}] 
Let $c>1$, and let
$\Delta>0$ be the constant from Lemma~\ref{lem:iterationstep} and 
$\epsilon=\Delta \delta$. Let $c_2\geq 4c$ be the constant from Lemma~\ref{lem:repeated curve} 
corresponding to $b=1/2$ and $c_1=c$. 
	
Let $\Omega$ be a domain such that 
for all  $x\in \Omega$ with $r=d(x,\partial \Omega)$ the inequality~\eqref{eq:Haus-scl-ass} holds, i.e., 
\[
\cH^\xi_{r}(B(x,2r)\setminus \Omega)\geq \delta\, \xi(B(x,r)).
\]
Let $x_0\in\Om$. If
\begin{equation}\label{eq:Eeek}
\cH^\xi_{r_0}(\partial_{x_0} \Omega(c_2) \cap B(x_0,16 r_0))
\ge \epsilon\, \xi(B(x_0,r_0))=\Delta \,\delta\, \xi(B(x_0,r_0)),
\end{equation}
we have the conclusion of the theorem  with $c$ replaced with $c_2$. 
So now assume that this inequality fails. Set $r_0=d(x_0,\partial\Om)$.
Then there exists a countable 
cover $\cC$ of  $\partial_{x_0} \Omega(c_2) \cap B(x_0,16 r_0)$ with open balls with $\rad(\cC)\leq r_0$ 
and with $\xi(\cC)< \epsilon \,\xi(B(x_0,r_0))$.
Note that $\cC$ necessarily covers $\partial_{x_0} \Omega(c) \cap B(x_0,16 r_0)$ as well. Then $B(x_0,r_0)=:B_0$
is an $\epsilon$-bad ball with respect to the countable collection $\cC$. Now an application of
Lemma~\ref{lem:iterationstep} gives the existence of $B_1=B(x_1,r_1)$ which is also $\epsilon$-bad with respect
to the cover $\cC$ such that $0<r_1<r_0/641$, $x_1\in B(x_0,8r_0)$, and $x_1$ can be reached from $x_0$ 
by a $c$-John curve. This is because $\partial_{x_0} \Omega(c) \cap B(x_0,16 r_0)\subset \bigcup \cC$ 
and hence~\eqref{eq:first-ste}
cannot hold for any point. We also have, from the $\epsilon$-badness of $B_1$, that $\rad(\cC|_{B(x_1,16r_1)})\le r_1$.
	
We also see that $B(x_1,2r_1)\cap\partial_{x_1}\Om(c)\subset B(x_0,16r_0)\cap\partial_{x_0}\Om(c_2)$ by
Lemma~\ref{lem:repeated curve}. Hence we can, by 
repeating the above argument with $B_0$ replaced by $B_1$ and with the same cover $\cC$, obtain
a ball $B_2=B(x_2,r_2)$ with $0<r_2<r_1/641$, $x_2\in B(x_1,8r_2)$, with $x_2$ reachable from $x_1$ by a
$c$-John curve and $B_2$ is $\epsilon$-bad with respect to the cover $\cC$. Moreover,
$\rad(\cC|_{B(x_2,16r_2)})\le r_2$ by the $\epsilon$-badness of $B_2$.
	
Inductively, we obtain a sequence of points $x_i$ and radii $r_i$ satisfying the following conditions:
\begin{enumerate}
	\item $r_i\leq \tfrac{1}{641} r_{i-1}$,
	\item $x_i\in B(x_{i-1}, 8 r_{i-1})$,
	\item $r_i=d(x_i,\partial \Omega)$,
	\item $B(x_i,r_i)$ is an $\epsilon$-bad ball with $\rad(\cC|_{B(x_i,16r_i)})\le r_i$,
	\item $x_i$ can be connected to $x_{i-1}$ by a $c$-John curve starting from $x_{i-1}$,
	\item $B(x_i,2r_i)\cap\partial_{x_i}\Om(c)\subset B(x_0,16r_0)\cap\partial_{x_0}\Om(c_2)$.
\end{enumerate} 
The sequence $(x_i)_i$ is Cauchy in $X$ and hence has a limit point $x_\infty\in \overline{\Om}$. By
Lemma~\ref{lem:repeated curve} again we have that $x_\infty$ can be reached by a $c_2$-John curve from $x_0$.
Moreover, 
\[
d(x_0,x_\infty)<\sum_{j=0}^\infty d(x_j,x_{j+1})\le 8r_0\, \sum_{j=0}^\infty (641)^{-j}\le 9r_0,
\]
and so $x_\infty\in B(x_0,16r_0)\cap \partial_{x_0}\Om(c_2)$. If $x_\infty\in\bigcup\cC$, then there would be some
open ball $B\in\cC$ such that $x_\infty\in B$, and so for sufficiently large $i$ we must have that 
$B(x_i,r_i)\subset B$ (because $\lim_ir_i=0$), and this violates the condition that $\rad(\cC|_{B(x_i,16r_i)})\le r_i$.
Hence $x_\infty$ is not in $\bigcup\cC$, violating the choice of $\cC$ as a cover of 
$\partial_{x_0} \Omega(c_2) \cap B(x_0,16 r_0)$. Therefore, the assumption stated above cannot hold true,
and so~\eqref{eq:Eeek} must be true.
\end{proof}

\appendix
	
\section{Direct proof of Corollary~\ref{cor:boundarysize}}\label{sec:appendix}
	
In contrast to Theorem \ref{thm:mainthm}, which gives a lower bound for the accessible boundary, Corollary \ref{cor:boundarysize} only discusses the topological boundary. This yields considerable simplifications for a direct proof. The statements and arguments also become a little more transparent, which is why we present the fleshed out argument in the appendix.
	
First, we do the reduction to the geodesic case.
	
\begin{theorem}\label{thm:mainthm-geodesic-bdry}
Let $(X,d,\mu)$ be a complete geodesic space with $\mu$ a doubling measure supporting
a $p$-Poincar\'e inequality for some $1\le p<\infty$. Let $\xi$ be a 
doubling gauge function which has codimension strictly greater than $p$. There exist constants 
$c \geq 1$ and a constant $\Delta>0$ so that any open subset $\Omega \subset X$ and 
$\delta\in(0,1)$ satisfy the following property.
		
If for all $x \in \Omega$ it holds that 
\begin{equation}\label{eq:Haus-scl-ass-bdry}
\mathcal{H}^\xi_{d(x,\partial \Omega)}(B(x,2 d(x,\partial \Omega))\setminus \Omega)\geq \delta\, \xi(B(x,d(x,\partial \Omega))),
\end{equation}
then for all $x \in \Omega$ we have
\begin{equation}\label{eq:Haus-scl-concl-bdry}
\mathcal{H}^\xi_{d(x,\partial \Omega)}(\partial \Omega \cap B(x,16 d(x,\partial \Omega)))\geq \Delta\, \delta\, \xi(B(x,d(x,\partial \Omega))).
\end{equation}
\end{theorem}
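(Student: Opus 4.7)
The plan is to argue by contradiction, reusing the core machinery of Section~\ref{sec:proof}---in particular the avoidance property of Lemma~\ref{lem:ballpointsold} and the high-density estimate from the proof of Lemma~\ref{lem:iterationstep}---but without any John-curve iteration. Fix $x_0 \in \Omega$, set $r_0 = d(x_0, \partial\Omega)$, and suppose the conclusion~\eqref{eq:Haus-scl-concl-bdry} fails with a sufficiently small constant $\Delta > 0$ to be determined. Then there is a countable cover $\cC$ of $\partial\Omega \cap B(x_0, 16r_0)$ by open balls with $\rad(\cC) \leq r_0$ and $\xi(\cC) < \Delta\,\delta\,\xi(B(x_0, r_0))$, and after discarding redundant balls we may assume each $\fkc \in \cC$ meets $\partial\Omega \cap B(x_0, 16r_0)$.

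With $\tau \geq 200$ fixed, set $E := B(x_0, 2r_0) \setminus (\Omega \cup \bigcup \tau\cC)$ and define
\[
H_\eta := \{x \in E : \exists\, r \in (0, 64r_0),\ \xi((18\cC)|_{B(x, r)}) \geq \eta\,\xi(B(x, r))\}
\]
for a small density parameter $\eta > 0$. The hypothesis~\eqref{eq:Haus-scl-ass-bdry}, combined with Lemma~\ref{lem:si-cont-scale-change} applied to $\bigcup \tau\cC$, gives $\mathcal{H}^\xi_{r_0}(E) \geq \tfrac{\delta}{2}\xi(B(x_0, r_0))$, and the $5$-covering argument used verbatim from Lemma~\ref{lem:iterationstep} yields $\mathcal{H}^\xi_{r_0}(H_\eta) \lesssim \eta^{-1}\xi(\cC)$. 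Choosing $\Delta$ small enough relative to $\tau$, $\eta$, and the doubling constants therefore produces a point $x \in E \setminus H_\eta$. Exactly as in the proof of~\eqref{eq:cbballs}, exploiting the assumption that each $\fkc \in \cC$ meets $\partial\Omega$, any $\fkc$ with $4\fkc \ni x_0$ would be forced to have $\rad(\fkc) \gtrsim r_0$ and hence $\xi(\fkc) \gtrsim \xi(B(x_0, r_0))$, which a further smallness requirement on $\Delta$ rules out; so $x_0 \notin \bigcup 4\cC$, and since $\tau \geq 4$ we also have $x \notin \bigcup 4\cC$.

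The sparseness hypothesis~\eqref{eq:Bojar} of Lemma~\ref{lem:ballpointsold} for the pair $(x_0, x)$ and the collection $\cC$ now follows on both endpoints: for $z = x$ and any $r \in (0, 4d(x_0, x)] \subset (0, 64r_0)$ from $x \notin H_\eta$ together with Lemma~\ref{lem:conentconv}; and for $z = x_0$ from the absolute bound $\xi(\cC|_{B(x_0, r)}) \leq \xi(\cC) < \Delta\,\delta\,\xi(B(x_0, r_0))$ combined with doubling of $\xi$ and the same codimension conversion. Lemma~\ref{lem:ballpointsold} then produces a rectifiable curve $\gamma$ from $x_0$ to $x$ contained in $B(x_0, 2d(x_0, x)) \subset B(x_0, 16r_0)$ and disjoint from $\bigcup \cC$. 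However, $\gamma(0) = x_0 \in \Omega$ and $\gamma(1) = x \in X \setminus \Omega$, so $\gamma$ must cross $\partial\Omega$ at some $w \in \partial\Omega \cap B(x_0, 16r_0) \subset \bigcup \cC$, contradicting disjointness. The main obstacle is not any single step but the simultaneous compatibility of the parameter choices $\Delta, \eta, \tau$: $\eta$ must be small so that the $\xi$-to-$h_p$ conversion leaves slack below the avoidance threshold $\epsilon_0$, and $\Delta$ must then be small relative to $\eta$ both to force $\mathcal{H}^\xi_{r_0}(E \setminus H_\eta) > 0$ and to exclude $x_0$ from $\bigcup 4\cC$. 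This bookkeeping mirrors a proper subset of the constraints already tracked in Lemma~\ref{lem:iterationstep} and is significantly simpler than in that lemma, as no $\cB_x$-construction, no pigeonholing to find a smaller bad ball, and no iteration are required.
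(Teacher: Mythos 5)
Your argument is sound in outline and genuinely differs in organization from the paper's own proof in Appendix~\ref{sec:appendix}. The paper never introduces the high-density set $H_\eta$ for this result: instead it shows that, for \emph{every} $x\in E=B(x_0,2r_0)\setminus(\Omega\cup\bigcup\tau\cB)$, the curve produced by Lemma~\ref{lem:ballpointsold} would have to cross $\partial\Omega$, so \eqref{eq:Bojar} must fail somewhere; it then rules out failure at $z=x_0$ (by the same ``balls of the cover near $x_0$ must have radius $\simeq r_0$, hence cannot exist'' computation you use), concludes that every $x\in E$ carries a radius $r_x$ at which the cover has high $h_p$-density at $x$ itself, and finally sums these over a $5$-covering of $E$, using $x\notin\bigcup\tau\cC$ to force the restricted subfamilies into disjoint balls, to contradict the global smallness of $\xi(\cC)$. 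You instead import the $H_\eta$ exclusion from Lemma~\ref{lem:iterationstep} to locate a \emph{single} point $x\in E\setminus H_\eta$, verify \eqref{eq:Bojar} at both endpoints, and get the contradiction from one avoidance curve crossing $\partial\Omega$. Both routes rest on Lemma~\ref{lem:ballpointsold}; yours trades the paper's Vitali/disjointness counting for the $H_\eta$ density bookkeeping, which is a fair exchange and arguably closer in spirit to the main proof.

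One step is under-justified as written: the verification of \eqref{eq:Bojar} at $z=x_0$. The bound $\xi(\cC|_{B(x_0,r)})\le\xi(\cC)<\Delta\delta\,\xi(B(x_0,r_0))$ ``combined with doubling'' only yields $\xi(\cC|_{B(x_0,r)})\lesssim\Delta\delta\,\xi(B(x_0,r))$ when $r\gtrsim r_0$; for $r\ll r_0$ doubling runs the wrong way and gives nothing, and the hypothesis must nevertheless hold for \emph{all} $r\in(0,4d(x,x_0)]$. The repair is the argument you already gave one sentence earlier (and which is exactly the paper's treatment of the case $z=x_0$, $r_x\le r_0/10$): since every ball of $\cC$ meets $\partial\Omega$ while $B(x_0,r_0)\subset\Omega$, any $\fkc\in\cC$ meeting $B(x_0,r)$ with, say, $r\le r_0/2$ has radius at least $(r_0-r)/2\gtrsim r_0$, hence $\xi(\fkc)\gtrsim\xi(B(x_0,r_0))$, which the smallness of $\xi(\cC)$ forbids; so $\cC|_{B(x_0,r)}=\emptyset$ and \eqref{eq:Bojar} is vacuous at those scales. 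For $r$ between $r_0/2$ and $r_0$ you should also note that balls of $\cC$ may have radius up to $r_0>r$, so Lemma~\ref{lem:conentconv} does not apply verbatim with reference ball $B(x_0,r)$; applying it with $B(x_0,2r)$ (or $B(x_0,r_0)$) and paying a fixed doubling factor fixes this. Similarly, at $z=x$ your conversion via Lemma~\ref{lem:conentconv} implicitly needs $\rad(\cC|_{B(x,r)})\le r$, which you should state explicitly; it follows from $x\notin\bigcup\tau\cC$ exactly as in \eqref{eq:radiusbound}. With these details supplied, the proof is complete.
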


The above theorem holds even in the case $p=1$ with $\xi=h_1$, but the proof differs from the one below. This case follows from the
so-called boxing inequality, see~ \cite{Pugilistic}.
	
The proof that Theorem~\ref{thm:mainthm-geodesic-bdry}, which assumes that $X$ is a geodesic space,
is equivalent to Corollary~\ref{cor:boundarysize}, which does not assume geodesicity, is mutatis mutandis the proof
that Theorem~\ref{thm:mainthm-geodesic} is equivalent to Theorem~\ref{thm:mainthm}, and so we omit it here.

\begin{proof}[Proof of Theorem \ref{thm:mainthm-geodesic-bdry}]
We fix $\tau>200$ and choose $k\ge 10$ such that $2^k>\tau$.
For simplicity, we set $\epsilon:=\delta\, \Delta$.
Suppose that we have a point $x_0\in\Om$ such that with $r_0=d(x_0,\partial\Om)$, we have
\begin{equation}\label{eq:AA-1}
	\mathcal{H}^\xi_{r_0}(B(x_0,2r_0)\setminus\Om)\ge \delta\, \xi(B(x_0,r_0))
\end{equation}
but
\begin{equation}\label{eq:AA-2}
	\mathcal{H}^\xi_{r_0}(B(x_0,4r_0)\cap\partial\Om)<\Delta\,\delta\,\xi(B(x_0,r_0))=\epsilon\, \xi(B(x_0,r_0)).
\end{equation}
Then we can find a countable collection $\mathcal{B}$ of balls covering $B(x_0,4r_0)\cap\partial\Om$ such that 
$\rad(\mathcal{B})<r_0$, $B\cap B(x_0,4r_0)\cap\partial\Om\ne \emptyset$  for each $B\in\mathcal{B}$, and
\begin{equation}\label{eq:AA-3}
	\xi(\mathcal{B})<\epsilon\, \xi(B(x_0,r_0)).
\end{equation}
With our choice of $\tau>200$ and $k\ge 10$ such that $2^k>\tau$, we also have that
\[
\xi(\tau\mathcal{B})<D^k\,\epsilon\, \xi(B(x_0,r_0)).
\]
Choosing $\Delta$ small enough, we make sure that 
\begin{equation}\label{eq:AA-Limit1}
D^k\Delta<\frac{1}{4},
\end{equation}
so that 
\begin{equation}\label{eq:AA-4}
	\mathcal{H}^\xi_{r_0}(B(x_0,2r_0)\setminus (\Om\cup\bigcup\tau\mathcal{B}))>\frac{3}{4}\, \delta\, \xi(B(x_0,r_0)).
\end{equation}
For each $x\in B(x_0,2r_0)\setminus (\Om\cup\bigcup\tau\mathcal{B})$, any curve in $B(x_0,4r_0)$ connecting $x$ to $x_0$
must intersect $\partial\Om\cap B(x_0,4r_0)$, and hence some ball in $\mathcal{B}$. Therefore 
by Lemma~\ref{lem:ballpointsold}, the hypotheses of that lemma must fail for the collection $\mathcal{B}$ and $x_0,x$.
		
If there is some $B\in\mathcal{B}$ such that $x_0\in 4B$, then as $B$ also intersects $\partial\Om$, with $x_B$ the center and
$r_B$ the radius of $B$, we must have $\frac{r_0}{4}\le r_B\le r_0$, and $d(x_0,x_B)<4r_B$. It follows that when 
$y\in B(x_0,r_0)$, we must have $d(y,x_B)<r_0+4r_B\le 8r_B$, that is, $B(x_0,r_0)\subset 8B$. Then 
\[
\xi(B)\ge \frac{1}{D^3}\xi(8B)\ge \frac{1}{D^4}\xi(B(x_0,r_0)),
\]
and so
\[
\xi(B(x_0,r_0))\le D^4\, \xi(B)\le D^4\, \xi(\mathcal{B})<D^4\,\epsilon\, \xi(B(x_0,r_0)).
\]
This is not possible, by choosing $\epsilon$ small enough so that
\begin{equation}\label{eq:AA-Limit2}
	D^4\, \epsilon<1.
\end{equation}
As $x\not\in\bigcup\tau\mathcal{B}$, it follows now that for each $B\in\mathcal{B}$ we have $4B\cap\{x,x_0\}$ is empty. Hence
by Lemma~\ref{lem:ballpointsold}, there is some $z\in\{x,x_0\}$ and $0<r_x<8r_0$ such that 
\[
h_p(\mathcal{B}|_{B(z,r_x)})>\epsilon_0\, h_p(B(z,r_x)),
\]
and so by Lemma~\ref{lem:conentconv} we have that
\begin{equation}\label{eq:AA-5}
	\xi(\mathcal{B}|_{B(z,r_x)})>\frac{\epsilon_0}{K_1}\, \xi(B(z,r_x)).
\end{equation}
		
If $z=x_0$ and $\tfrac{r_0}{10}<r_x<8r_0$, then $\xi(B(z,r_x))=\xi(B(x_0,r_x))\ge D^{-4}\, \xi(B(x_0,r_0))$, 
and so by~\eqref{eq:AA-5},
\[
\frac{1}{D^4}\, \xi(B(x_0,r_0))\le \frac{K_1}{\epsilon_0}\, \xi(\mathcal{B}|_{B(x_0,r_x)})
\le \frac{K_1}{\epsilon_0}\, \xi(\mathcal{B})<\frac{K_1}{\epsilon_0}\, \epsilon\, \xi(B(x_0,r_0)),
\]
which will not be possible if we also ensure $\epsilon$ (or, equivalently, $\Delta$) is small enough so that
\begin{equation}\label{eq:AA-Limit3}
	\frac{D^4\, K_1}{\epsilon_0}\, \epsilon<1.
\end{equation}
On the other hand, if $z=x_0$ and $r_x\le \tfrac{r_0}{10}$, then $B(z,r_x)\subset B(x_0,r_0/10)$, and 
so if $B\in\mathcal{B}$ such that $B$ intersects
$B(z,r_x)$ and hence intersects $B(x_0,r_0/10)$, then by the virtue of $B$ containing a point in $\partial\Om$ we must have that
$2r_B>9r_0/10$, that is, $r_B>9r_0/20$. Now $d(x_B,x_0)<r_B+\tfrac{r_0}{10}<\tfrac{11}{9}r_B$. 
Therefore when $y\in B(x_0,r_0)$,
we must have $d(x_B,y)<\tfrac{11}{9}r_B+r_0<\tfrac{31}{9}r_B$. It follows that $B(x_0,r_0)\subset \tfrac{31}{9}B$, 
whence we obtain
\[
\xi(B)\ge D^{-4}\, \xi(\tfrac{31}{9}B)\ge D^{-5}\, \xi(B(x_0,r_0)).
\]
Therefore we have
\[
\frac{1}{D^5}\, \xi(B(x_0,r_0))\le \xi(B)\le \xi(\mathcal{B})<\epsilon\, \xi(B(x_0,r_0)),
\]
which is also not possible by making $\epsilon$ to be small enough so that
\begin{equation}\label{eq:AA-Limit4}
	D^5\, \epsilon<1.
\end{equation}
Therefore, for each $x\in B(x_0,2r_0)\setminus(\Om\cup\bigcup\tau\mathcal{B})$ we must have $z=x$, together with 
some $0<r_x<8r_0$ such that 
\begin{equation}\label{eq:AA-OnePigeon}
	\xi(\mathcal{B}|_{B(x,r_x)})\ge \frac{\epsilon_0}{K_1}\, \xi(B(x,r_x)).
\end{equation}
		
The collection $\{B(x,2r_x)\, :\, x\in B(x_0,2r_0)\setminus(\Omega\cup\bigcup\tau\mathcal{B})\}$ covers 
$B(x_0,2r_0)\setminus(\Omega\cup\bigcup\tau\mathcal{B})$. We apply the $5$-covering theorem to obtain a pairwise-disjoint
subcollection $\{B_i\}_{i\in I\subset\N}$ such that $\{5B_i\}_{i\in I}$ covers
 $B(x_0,2r_0)\setminus(\Omega\cup\bigcup\tau\mathcal{B})$. Since $\rad(B_i)\leq 8r_i$ for all $i\in I$, we get 
from~\eqref{eq:AA-4} and Lemma~\ref{lem:si-cont-scale-change} that
\begin{equation}\label{eq:AA-content}
\frac{3}{4}\, \delta\, \xi(B(x_0,r_0))<\mathcal{H}^\xi_{r_0}(B(x_0,2r_0)\setminus (\Om\cup\bigcup\tau\mathcal{B}))\leq C(40)\mathcal{H}^\xi_{40r_0}(B(x_0,2r_0)\setminus (\Om\cup\bigcup\tau\mathcal{B})) \leq \sum_{i\in I} \xi(5B_i).
\end{equation}
Now, if $B\in \mathcal{B}$ is such that $B\cap B(x,r_x)\ne\emptyset$ for some 
$x\in B(x_0,2r_0)\setminus(\Omega\cup\bigcup\tau\mathcal{B})$, then as $d(x,x_B)\ge \tau r_B$, we must have
$\tau r_B\le r_x+r_B$, and so necessarily
\[
r_x>(\tau-1)\, r_B.
\]
It follows that $B\subset B(x,2r_x)$. Consequently, if $B_i, B_j$ are distinct balls in our covering, $i,j\in I$, then 
every pair of balls $B\in \cB|_{\tfrac12}B_i$ and $B'\in \cB|_{\tfrac12}B_j$ 
satisfy $B\subset B_i$ and $B'\subset B_j$. 
Since $B_i\cap B_j = \emptyset$, we have that the collections $\cB|_{\tfrac12}B_i$ and 
$\cB|_{\tfrac12}B_j$ are disjoint. 
	
Now it follows from~\eqref{eq:AA-content} and the above disjointness property that 
\[
\frac{3\delta}{4C(40)}\, \xi(B(x_0,r_0))\le \sum_{i\in I}\xi(5B_i)\le D^4\, \sum_{i\in I}\xi(\tfrac12B_i)
\le D^4\, \frac{K_1}{\epsilon_0}\, \sum_{i\in I}\xi(\mathcal{B}|_{\tfrac12B_i})
\le D^4\, \frac{K_1}{\epsilon_0}\, \epsilon\, \xi(B(x_0,r_0)),
\]
which again is not possible by choosing $\Delta$ to be small enough so that
\begin{equation}\label{eq:AA-Limit5}
\frac{4D^{4}C(40)}{3}\, \frac{K_1}{\epsilon_0}\, \frac{\epsilon}{\delta}=\frac{4D^{4} C(40)}{3}\, \frac{K_1}{\epsilon_0}\, \Delta<1.
\end{equation}
These exhaust all the possibilities, and so
by ensuring that conditions~\eqref{eq:AA-Limit1}, \eqref{eq:AA-Limit2}, \eqref{eq:AA-Limit3}, 
\eqref{eq:AA-Limit4}, \eqref{eq:AA-Limit5}
all hold,~\eqref{eq:AA-1} and~\eqref{eq:AA-2} cannot simultaneously hold. This completes
the proof of Theorem~\ref{thm:mainthm-geodesic-bdry}.
\end{proof}

\bibliographystyle{acm}
\bibliography{pmodulus}
\def\cprime{$'$}
	
\medskip
	
\noindent {\bf Address:} \\
	
\noindent S.E.-B.: University of Jyv\"askyl\"a, Department of Mathematics and Statistics, P.O. Box 35 (MaD), FI-40014 University of Jyv\"askyl\"a, Finland \\
\noindent E-mail: S.E-B.: {\tt sylvester.d.eriksson-bique@jyu.fi}\\
	
\vskip .2cm
	
\noindent R.G.: Department of Mathematical Sciences, P.O.~Box 210025, University of Cincinnati, Cincinnati, OH~45221-0025, U.S.A.\\
\noindent E-mail: R.G.: {{\tt gibararn@ucmail.uc.edu}\\ 
		
\vskip .2cm
		
\noindent R.K.: Department of Mathematics and Systems Analysis, Aalto University, P.O. Box 11100, FI-00076 Aalto, Finland.
		\\
\noindent E-mail:  R.K.: {\tt riikka.korte@aalto.fi}\\
		
\vskip .2cm
		
\noindent N.S.: Department of Mathematical Sciences, P.O.~Box 210025, University of Cincinnati, Cincinnati, OH~45221-0025, U.S.A.\\
\noindent E-mail:  N.S.: {\tt shanmun@uc.edu}\\	
		
\end{document}